\documentclass[11pt]{article}

\usepackage[margin=1in]{geometry}
\usepackage{amsmath,amssymb,amsthm,mathtools}
\usepackage{enumitem}
\usepackage{comment}
\usepackage{microtype}
\usepackage{hyperref}
\usepackage{xcolor}
\usepackage{authblk}
\usepackage{graphicx}
\usepackage{cleveref}
\usepackage[backend=biber, style=numeric, maxnames=99]{biblatex} 
\hypersetup{
  colorlinks=true,
  linkcolor=red,
  urlcolor=red,
  citecolor=red
}

\newcommand{\KS}{\mathrm{KS}}
\newcommand{\KL}{\mathrm{KL}}
\newcommand{\MMD}{\mathrm{MMD}}
\newcommand{\KLinf}{\mathrm{KL}_{\mathrm{inf}}}

\newcommand{\dinfKL}{d_{\mathrm{infKL}}}

\newcommand{\GRO}{\operatorname{GRO}}

\newcommand{\REGROW}{\operatorname{REGROW}}
\newcommand{\E}{\mathbb{E}}
\newcommand{\RR}{\mathbb{R}}
\newcommand{\Pcal}{\mathcal{P}}
\newcommand{\Qcal}{\mathcal{Q}}

\newcommand{\Fcal}{\mathcal{F}}
\newcommand{\X}{\mathsf{X}}
\newcommand{\M}{\mathcal{M}}
\newcommand{\TV}{\mathrm{TV}}
\newcommand{\Wass}{\mathsf W}
\newcommand{\dd}{d}
\newcommand{\B}{\mathcal{B}}
\newcommand{\one}{\mathbf{1}}

\newtheorem{theorem}{Theorem}[section]
\newtheorem{proposition}[theorem]{Proposition}
\newtheorem{lemma}[theorem]{Lemma}
\newtheorem{assumption}[theorem]{Assumption}
\newtheorem{corollary}[theorem]{Corollary}
\theoremstyle{definition}
\newtheorem{definition}[theorem]{Definition}
\newtheorem{remark}[theorem]{Remark}

\theoremstyle{definition}

\author[]{Ashwin Ram\thanks{aram2@andrew.cmu.edu} }
\author[]{Aaditya Ramdas\thanks{aramdas@cmu.edu}}
\affil[]{Carnegie Mellon University}

\title{Power-one sequential tests for \texorpdfstring{$\Pcal$ against $\Pcal^c$}{P against P-complement}}
\date{\today}

\begin{document}

\maketitle

\begin{abstract}
We study power-one sequential testing for an i.i.d. law on a Polish sample space.  Given a nonempty composite null class $\Pcal\subseteq\mathcal M_1(\X)$, we ask when there exists a level-$\alpha$ stopping rule that rejects almost surely under every alternative in a prescribed class $\Qcal\subseteq\Pcal^c$.  Our main sufficient condition is local weak lower semicontinuity and positivity of the information projection functional
\(
\Phi_\Pcal(Q):=\inf_{P\in\Pcal}\KL(Q\|P).
\)
In particular, if $\Pcal$ is weakly compact, then for every $\alpha\in(0,1)$ there is a single level-$\alpha$ sequential test with power one against the entire complement $\Pcal^c$.  The proof combines Csisz\'ar's nonasymptotic Sanov bound for weakly closed convex empirical-measure sets with a Lindel\"of countable-subcover argument.  We also show that weak lower semicontinuity is sufficient but not necessary by giving examples where discontinuous finite-sample events separate alternatives that weak neighborhoods cannot detect.
Finally, we construct an $e$-process that is asymptotically relatively growth-rate optimal under weak compactness.  
We verify the weak-lower-semicontinuity condition for weakly compact nulls, $f$-divergence balls, several integral probability metric balls, Wasserstein balls on proper spaces, and a number of non-weakly-compact semiparametric examples.
\end{abstract}

\section{Introduction}
Sequential tests allow the sample size to be chosen from the data while maintaining a prescribed type-I error probability.  This feature is essential in modern anytime-valid inference: evidence can be monitored continuously, experiments can be stopped early, and validity is retained under optional stopping and optional continuation.  Classical work of Wald initiated the theory for simple hypotheses \parencite{wald1945}, and work of Robbins, Siegmund, Lai and collaborators developed power-one sequential tests and confidence sequences in several nonparametric  settings \parencite{darlingrobbins1967, darlingrobbins1967iterated, darlingrobbins1968, robbins1970, robbins_siegmund_1970, robbins_siegmund_1974, lai1976, lai1977}.

This paper considers a general i.i.d. composite testing problem.  The sample space $\X$ is Polish, the null is an arbitrary nonempty class $\Pcal\subseteq\mathcal M_1(\X)$, and the alternative is a class $\Qcal\subseteq\Pcal^c$.  A level-$\alpha$ sequential test is a stopping time $\tau$ satisfying
\[
  \sup_{P\in\Pcal}P^\infty(\tau<\infty)\le \alpha,
\]
and it has power one against $Q$ if $Q^\infty(\tau<\infty)=1$.  The central question is: is there a simple sufficient condition under which a single level-$\alpha$ stopping rule have power one for all $Q\in\Qcal$?

Our first answer is a weak-topological sufficient condition.  Define
\[
  \Phi_\Pcal(R):=\inf_{P\in\Pcal}\KL(R\|P).
\]
If $\Phi_\Pcal(Q)>0$ and $\Phi_\Pcal$ is weakly lower semicontinuous at each $Q\in\Qcal$, then for every $\alpha\in(0,1)$ there exists a level-$\alpha$ sequential test with power one against every $Q\in\Qcal$.  In particular, if $\Pcal$ is weakly compact, then $\Phi_\Pcal$ is weakly lower semicontinuous everywhere and $\Phi_\Pcal(Q)>0$ for every $Q\notin\Pcal$; hence weak compactness of the null implies power-one testability against the full complement $\Pcal^c$.

The proof uses only the weak topology and relative entropy.  For each alternative $Q$, lower semicontinuity gives a weak neighborhood on which $\Phi_\Pcal$ is bounded away from zero.  A closed convex bounded-Lipschitz ball inside this neighborhood has exponentially small empirical-measure probability under every null distribution by Csisz\'ar's nonasymptotic Sanov theorem.  The empirical measure enters this ball eventually under $Q$ by the law of large numbers.  For composite alternatives, the weak topology on $\mathcal M_1(\X)$ is second countable, so a Lindel\"of argument reduces the uncountable family of local neighborhoods to a countable subcover; allocating error probabilities over this subcover gives one valid stopping rule.

Weak lower semicontinuity is not necessary.  It is a convenient sufficient condition that detects alternatives through weak neighborhoods of empirical measures, but some alternatives are separated by discontinuous finite-sample events that weak neighborhoods cannot detect.  The example in Section~\ref{sec:notnec} illustrates this distinction and shows that the weak-topological condition should not be interpreted as a necessary condition.

We also discuss growth-rate optimality.  For a point alternative $Q$, the raw quantity $\Phi_\Pcal(Q)$ is not the correct optimal betting-wealth growth benchmark in full generality.  A recent work \cite{ramramdas2026optimalbettingwealth} shows that the optimal asymptotic growth rate is the bipolar inf-KL rate
\[
  \dinfKL(Q;\Pcal)
  :=\lim_{n\to\infty}\frac1n
  \inf_{R\in(\Pcal^n)^{\circ\circ}}\KL(Q^n\|R),
  \qquad
  \Pcal^n:=\{P^n:P\in\Pcal\},
\]
where $(\Pcal^n)^{\circ\circ}$ is the bipolar, or effective $n$-sample null, generated by all valid nonnegative $n$-sample tests \parencite{ramramdas2026optimalbettingwealth}.  This rate can be strictly smaller than the raw KL infimum.  However, the same work shows that the two quantities coincide when $\Phi_\Pcal$ is weakly lower semicontinuous at $Q$; weak compactness is an important sufficient condition.  Therefore the REGROW theorem in this paper should be read as follows: under weak compactness, the universal $e$-process we construct achieves the true bipolar inf-KL benchmark, and this benchmark equals the simpler expression $\Phi_\Pcal(Q)$.

The paper is organized as follows.  Section~\ref{sec:warm} gives notation, polars, bipolars, and entropy preliminaries.  Sections~\ref{sec:simple} and~\ref{sec:composite} prove the local-lower-semicontinuity sufficient condition for simple and composite alternatives.  Section~\ref{sec:notnec} shows that weak lower semicontinuity is not necessary.  Section~\ref{sec:eproc} aggregates power-one tests into a divergent $e$-process.  Section~\ref{sec:regrow} proves the corrected REGROW result under weak compactness.  Section~\ref{sec:examples} and the appendices verify the lower-semicontinuity condition in several statistically relevant examples.
\section{Preliminaries}\label{sec:warm}
Throughout, we let $(\X, \B)$ be a Polish space with its Borel $\sigma$-field, $\B$. We let $\mathcal{M}_1(\X)$ be the set of Borel probability measures on $(\X,\B)$. In addition, more generally for a measurable space $(\Omega, \mathcal F)$, we take  $\mathcal{M}_1(\Omega)$ to be the set of probability measures on $(\Omega, \mathcal F)$. Weak convergence of measures is denoted by $\Rightarrow$. Given all this, we will first define the KL divergence.

\begin{definition}\label{def:kl}
For $M, N\in\mathcal{M}_1(\X)$ we denote the KL divergence as,
\[
\KL(M\|N):=
\begin{cases}
\displaystyle \int_\X \log\left(\frac{dM}{dN}\right)dM, &\text{if } M\ll N,\\[0.6em]
+\infty, &\text{otherwise.}   
\end{cases}
\]
\end{definition}

\noindent Importantly, we do not use any dominating reference measure in our above definition of the KL. Only the Radon-Nikodym derivative when it exists.

For a nonempty null $\Pcal\subseteq\mathcal M_1(\X)$ define the raw KL-infimum functional
\[
  \Phi_\Pcal(R):=\inf_{P\in\Pcal}\KL(R\|P),
\]
with the convention that the infimum of an empty set is not used because $\Pcal$ is always assumed nonempty.  We say that $\Phi_\Pcal$ is \emph{weakly lower semicontinuous at $Q$} if $R_k\Rightarrow Q$ implies
\[
  \Phi_\Pcal(Q)\le \liminf_{k\to\infty}\Phi_\Pcal(R_k).
\]

Weak compactness of $\Pcal$ is one useful way to obtain lower semicontinuity of $\Phi_\Pcal$; see Lemma~\ref{lem:phi-lsc} below.  We first record the basic joint lower semicontinuity of relative entropy.

\begin{lemma}\label{lem:kl-lsc}
The map $(M, N)\mapsto\KL(M\|N)$ from $\mathcal{M}_1(\X)\times\mathcal{M}_1(\X)$ to $[0,\infty]$ is lower semicontinuous for the weak topology on both coordinates.  
\end{lemma}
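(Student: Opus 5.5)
We will prove the joint lower semicontinuity through the Donsker--Varadhan variational formula, which writes $\KL(M\|N)$ as a supremum of weakly continuous functions of the pair $(M,N)$. A pointwise supremum of continuous functions is lower semicontinuous, so this gives the claim directly. Concretely, we aim to establish
\[
  \KL(M\|N)=\sup_{g\in C_b(\X)}\Bigl\{\int g\,dM-\log\int e^{g}\,dN\Bigr\}.
\]
For each fixed $g\in C_b(\X)$, the map $(M,N)\mapsto \int g\,dM-\log\int e^g\,dN$ is continuous in the product of the weak topologies. Indeed, $\int g\,dM$ and $\int e^g\,dN$ are weakly continuous by the definition of weak convergence. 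Moreover, $\int e^g\,dN\ge e^{-\|g\|_\infty}>0$, so the logarithm is continuous there. Taking the supremum over $g$ then gives lower semicontinuity of $\KL$.

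The work therefore lies in the variational formula. We proceed in three steps.

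\emph{Upper bound.} We first show that $\int g\,dM-\log\int e^g\,dN\le\KL(M\|N)$ for every bounded measurable $g$. This is trivial if $M\not\ll N$. Otherwise, define the tilted measure $N_g:=e^g N/\int e^g\,dN$. Then
\[
  \KL(M\|N)-\int g\,dM+\log\int e^g\,dN=\KL(M\|N_g)\ge 0.
\]
When $\KL(M\|N)=\infty$ this identity needs a little care, but in that case the inequality is immediate.

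\emph{Supremum over bounded measurable $g$.} Next we show that the supremum over bounded measurable $g$ equals $\KL(M\|N)$.
\begin{itemize}[leftmargin=*]
  \item If $M\ll N$ with density $f$, take $g_k:=\max(\min(\log f,k),-k)$. Passing to the limit $k\to\infty$ via monotone convergence on the positive and negative parts (Fatou suffices for the lower bound), the objective tends to $\int \log f\,dM$. This also covers the case where the value is $+\infty$.
  \item If $M\not\ll N$, pick a Borel set $A$ with $N(A)=0<M(A)$ and take $g=t\mathbf 1_A$. Then $\int g\,dM-\log\int e^g\,dN=tM(A)\to\infty$ as $t\to\infty$.
\end{itemize}

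\emph{Reduction to $C_b(\X)$.} We pass from bounded measurable $g$ to $g\in C_b(\X)$. Since $\X$ is Polish, $M+N$ is a finite Borel measure, so bounded continuous functions are dense in $L^1(M+N)$. By Lusin's theorem, we can approximate a bounded measurable $g$ by continuous $g_j$ with $\|g_j\|_\infty\le\|g\|_\infty$ and $g_j\to g$ $(M+N)$-a.e. along a subsequence. Dominated convergence then gives $\int g_j\,dM\to\int g\,dM$ and $\int e^{g_j}\,dN\to\int e^{g}\,dN$, so the continuous functions achieve the same supremum.

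The main obstacle is this last density step together with the careful handling of infinite values. The upper-bound identity and the two cases of the measurable-$g$ supremum are routine. One must ensure the approximants stay uniformly bounded so that the exponential integrals converge; truncating the continuous approximants at level $\|g\|_\infty$ handles this. Alternatively, we could cite the Donsker--Varadhan formula in its $C_b$ form (e.g.\ Dembo--Zeitouni, Lemma 6.2.13), from which the lemma follows immediately.
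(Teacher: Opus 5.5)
Your proposal is correct and takes the same approach as the paper. Both write $\KL$ as the Donsker--Varadhan supremum over $g\in C_b(\X)$ of the jointly weakly continuous functionals $\int g\,dM-\log\int e^g\,dN$, and conclude lower semicontinuity as a supremum of continuous functions; the only difference is that you also sketch a proof of the $C_b$ variational formula (tilting, truncation of $\log(dM/dN)$, and density of bounded continuous functions in $L^1(M+N)$), which the paper simply invokes.
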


\begin{proof}
Use the variational formula
\[
  \KL(Q\|P)=\sup_{g\in C_b(\X)}\left\{\int gd Q-\log\int e^gd P\right\},
\]
where $C_b(\X)$ is the set of bounded continuous functions on $\X$.
For each fixed bounded continuous \(g\), both integrals are continuous under weak convergence.  Taking the supremum of continuous lower bounds gives the claim.
\end{proof}

\section{Power-One Sequential Tests Against Simple Alternatives}\label{sec:simple}

We now analyze some applications to testing from all that we have shown so far. To begin, let $(X_i)_{i\ge 1}$ be the coordinate maps on $(\X^{\mathbb N},\B^{\otimes \mathbb N})$. And, let $\mathcal F_n:=\sigma(X_1, \dots, X_n)$ just be the natural filtration. For $P\in\mathcal M_1(\X)$, we let $P^\infty:=P^{\otimes \mathbb N}$ for the iid law on $\X^{\mathbb N}$. With this setup in mind, let us now define both sequential level and power.

\begin{definition}\label{def:seq-test}
Take a particular null class $\Pcal\subseteq\mathcal M_1(\X)$ and an alternative $Q\in\mathcal M_1(\X)$. A sequential test is simply a $(\mathcal F_n)$-stopping rule $\tau:\X^{\mathbb N}\to\mathbb N\cup\{\infty\}$. Now for some particular $\alpha\in(0,1)$ we say that $\tau$ has level at most $\alpha$ uniformly over $\Pcal$ if $\sup_{P\in\Pcal} P^\infty(\tau<\infty)\le \alpha$. We say that $\tau$ has power one against $Q$ if $Q^\infty(\tau<\infty)=1$.   
\end{definition}

\noindent In addition, for our statements below, we will need Pinsker's Inequality. Recall that in our setup, Pinsker tells us that for any probability measures $\mu, \nu$ on a measurable space,
\begin{equation}\label{eq:pink}
\|\mu-\nu\|_{\mathrm{TV}}\le \sqrt{\frac12\KL(\mu\|\nu)},
\end{equation}
where the rhs will be $+\infty$ if $\KL(\mu\|\nu)=\infty$.

\noindent With all this presented, we will now formally present a proposition to establish something very closely related to the following statement: informally, a test is power one if and only if $\KLinf>0$.

\begin{proposition}\label{prop:seq-power-one}
Fix $\alpha\in(0,1)$, $Q\in\mathcal M_1(\X)$, and a nonempty $\Pcal\subseteq\mathcal M_1(\X)$.  Let
\[
  d:=\Phi_\Pcal(Q)=\inf_{P\in\Pcal}\KL(Q\|P)\in[0,\infty].
\]
Then the following hold.
\begin{enumerate}
    \item If $d=0$, then no sequential test of level at most $\alpha$ can have power one against $Q$.
    \item If $d>0$ and $\Phi_\Pcal$ is weakly lower semicontinuous at $Q$, then there exists a sequential test of level at most $\alpha$ with power one against $Q$.
\end{enumerate}
Consequently, under the lower-semicontinuity assumption in Theorem~\ref{thm:mainy}, a level-$\alpha$ power-one sequential test against the point alternative $Q$ exists if and only if $\Phi_\Pcal(Q)>0$.
\end{proposition}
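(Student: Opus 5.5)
For part~1, suppose $d=0$ and let $\tau$ be any level-$\alpha$ sequential test. Fix $\varepsilon>0$ and an integer $n$. Since $d=0$, there is $P\in\Pcal$ with $\KL(Q\|P)$ as small as desired. Tensorization gives $\KL(Q^n\|P^n)=n\KL(Q\|P)$. Apply Pinsker \eqref{eq:pink} on $\mathcal F_n$ to the event $\{\tau\le n\}\in\mathcal F_n$:
\[
Q^\infty(\tau\le n)\le P^\infty(\tau\le n)+\sqrt{\tfrac n2\KL(Q\|P)}\le \alpha+\sqrt{\tfrac n2\KL(Q\|P)}.
\]
Choosing $P$ with $\KL(Q\|P)\le 2\varepsilon^2/n$ yields $Q^\infty(\tau\le n)\le\alpha+\varepsilon$. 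Since $n$ and $\varepsilon$ are arbitrary, monotone convergence in $n$ gives $Q^\infty(\tau<\infty)\le\alpha<1$. So no level-$\alpha$ test has power one against $Q$. This step is routine.

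For part~2, I would follow the strategy sketched in the introduction. By weak lower semicontinuity at $Q$ and $d>0$, pick $0<\eta<d$; the case $d=\infty$ is handled by taking any finite $\eta>0$. Then there is a weak neighborhood $U$ of $Q$ on which $\Phi_\Pcal>\eta$. The weak topology on $\mathcal M_1(\X)$ is metrized by the bounded-Lipschitz metric $\beta$ because $\X$ is Polish, so there is $r>0$ with the closed ball $C:=\{R:\beta(R,Q)\le r\}\subseteq U$.

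The set $C$ is convex, because $R\mapsto\sup_{\|f\|_{BL}\le1}|\int f\,dR-\int f\,dQ|$ is a supremum of convex functions. It is also weakly closed. Hence Csisz\'ar's nonasymptotic Sanov bound applies: for every $P\in\Pcal$ and every $n$,
\[
P^n(\hat Q_n\in C)\le \exp\Bigl(-n\inf_{R\in C}\KL(R\|P)\Bigr)\le e^{-n\eta},
\]
where $\hat Q_n$ is the empirical measure. The second inequality holds because $\inf_{R\in C}\KL(R\|P)\ge\inf_{R\in C}\Phi_\Pcal(R)\ge\eta$, uniformly in $P$.

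Now choose $n_0$ with $\sum_{n\ge n_0}e^{-n\eta}\le\alpha$ and set $\tau:=\inf\{n\ge n_0:\hat Q_n\in C\}$. This is a stopping time because $\{\hat Q_n\in C\}\in\mathcal F_n$; measurability follows from $C$ being closed and $\hat Q_n$ being a measurable function of $(X_1,\dots,X_n)$. A union bound gives $P^\infty(\tau<\infty)\le\alpha$ for every $P\in\Pcal$.

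For power, Varadarajan's theorem gives $\hat Q_n\Rightarrow Q$ $Q^\infty$-a.s. on a Polish space, so $\beta(\hat Q_n,Q)\to0$. Hence $\hat Q_n$ eventually lies in $C$, and $\tau<\infty$ almost surely.

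The main obstacle is the Sanov step. Csisz\'ar's bound requires a convex set that is closed in the appropriate (weak, or $\tau$-) topology and a strictly positive information-projection value uniformly over the null. Lower semicontinuity at $Q$ alone gives only an open neighborhood, so the essential move is shrinking to a closed convex $\beta$-ball inside it. Finally, the ``consequently'' clause follows by combining parts~1 and~2: under lower semicontinuity, $d>0$ is both necessary and sufficient.
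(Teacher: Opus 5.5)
Your proposal is correct and matches the paper's proof essentially step for step. Part~1 uses Pinsker plus tensorization on the event $\{\tau\le n\}\in\mathcal F_n$; you decouple $\varepsilon$ from $n$, whereas the paper takes $\KL(Q\|P_n)\le n^{-2}$. Part~2 uses a closed convex bounded-Lipschitz ball inside $\{\Phi_\Pcal>\eta\}$, Csisz\'ar's nonasymptotic Sanov bound uniformly over $\Pcal$, a start time with summable tail, and almost-sure weak convergence of the empirical measures under $Q$.
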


The proof relies on the following nonasymptotic version of Sanov's theorem. Note that our lemma's hypothesis also entails that Csisz{\'a}r's assumption of almost complete convexity holds.

\begin{lemma}[Csisz{\'a}r~\cite{csiszar1984sanov}]\label{lem:convex-empirical}
Let $P\in\mathcal M_1(\X)$ and let $C\subseteq\mathcal M_1(\X)$ be convex and weakly closed. Define $A_n:=\{\widehat Q_n\in C\}\subseteq \X^n$. Then it follows that for every $n\ge 1$,
\[
P^n(A_n)\ \le\ \exp\Bigl(-n\inf_{R\in C}\KL(R\|P)\Bigr).
\]
\end{lemma}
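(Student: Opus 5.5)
The plan is the classical conditioning argument. Set $d:=\inf_{R\in C}\KL(R\|P)$. We may assume $p:=P^n(A_n)>0$, since otherwise there is nothing to prove.

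\textbf{Measurability of $A_n$.} The map $x\mapsto\widehat Q_n=\frac1n\sum_{i\le n}\delta_{x_i}$ is continuous from $\X^n$ into $\mathcal M_1(\X)$ with the weak topology. Since $C$ is weakly closed, $A_n$ is a Borel set.

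\textbf{Conditioning and the chain rule.} Define the conditional law $P^n_A:=P^n(\cdot\mid A_n)$. Its density with respect to $P^n$ is $\one_{A_n}/p$, so directly
\[
\KL(P^n_A\|P^n)=-\log p .
\]
Let $R_1,\dots,R_n$ be the one-dimensional marginals of $P^n_A$, and let $\bar R:=\frac1n\sum_{i}R_i$. The reference measure $P^n$ is a product. Hence the chain rule, together with the data-processing inequality applied to each conditional factor, gives the superadditivity bound
\[
\KL(P^n_A\|P^n)\ \ge\ \sum_{i=1}^n\KL(R_i\|P).
\]
By convexity of $\KL(\cdot\|P)$, the right-hand side is at least $n\,\KL(\bar R\|P)$. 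Thus $-\log p\ge n\KL(\bar R\|P)$.

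\textbf{Showing $\bar R\in C$.} It then suffices to prove $\bar R\in C$, because that gives $\KL(\bar R\|P)\ge d$ and hence $p\le e^{-nd}$. Note that $\bar R$ is the barycenter of the empirical measure under $P^n_A$: for every $g\in C_b(\X)$,
\[
\int g\,d\bar R=\E_{P^n_A}\Bigl[\int g\,d\widehat Q_n\Bigr].
\]
Moreover, $\widehat Q_n\in C$ holds $P^n_A$-almost surely by construction.

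\textbf{Main obstacle.} I expect the barycenter step to be the main obstacle. Because $\widehat Q_n$ may take uncountably many values, finite convexity alone does not place $\bar R$ in $C$. I would argue by Hahn--Banach separation.

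\begin{itemize}
\item Embed $\mathcal M_1(\X)$ into the space of finite signed measures, equipped with the locally convex topology $\sigma(\cdot,C_b(\X))$. Its dual is $C_b(\X)$, and its trace on $\mathcal M_1(\X)$ is the weak topology.
\item $\mathcal M_1(\X)$ is closed in this space, since positivity and unit mass are both preserved by pointwise limits against $C_b$ and the constant function $1$. Therefore $C$ is closed and convex in the ambient space.
\item If $\bar R\notin C$, separation yields $g\in C_b(\X)$ with $\int g\,d\bar R>\sup_{R\in C}\int g\,dR$.
\item But the left-hand side equals $\E_{P^n_A}[\int g\,d\widehat Q_n]$, which is at most that supremum because $\widehat Q_n\in C$ almost surely. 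This is a contradiction.
\end{itemize}

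Combining the steps gives $P^n(A_n)=p\le\exp(-n\,d)$ for every $n\ge1$, as claimed.
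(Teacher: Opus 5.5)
Your proof is correct, and it is essentially Csisz\'ar's own conditioning argument: $\KL(P^n_A\|P^n)=-\log P^n(A_n)$, then superadditivity over the product reference measure, then convexity, then membership of the averaged marginal in $C$. The paper does not reprove the lemma; it cites it, and the one ingredient it asserts without proof (that convexity plus weak closedness gives Csisz\'ar's almost-complete-convexity hypothesis) is exactly what your Hahn--Banach barycenter step supplies, and that step is sound. You could even skip checking that $\mathcal M_1(\X)$ is closed among signed measures: separate $\bar R$ from the ambient closure of $C$, whose trace on $\mathcal M_1(\X)$ is $C$.
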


\begin{proof}[Proof of Proposition~\ref{prop:seq-power-one}]
Assume first that $d=0$.  For each $n\ge1$, choose $P_n\in\Pcal$ such that $\KL(Q\|P_n)\le n^{-2}$.  Let $\tau$ be any stopping rule satisfying $\sup_{P\in\Pcal}P^\infty(\tau<\infty)\le\alpha$, and set $B_n:=\{\tau\le n\}\in\mathcal F_n$.  Then
\[
  P_n^n(B_n)=P_n^\infty(\tau\le n)\le P_n^\infty(\tau<\infty)\le\alpha.
\]
Moreover,
\[
  \KL(Q^n\|P_n^n)=n\KL(Q\|P_n)\le n^{-1}.
\]
Pinsker's inequality gives $\|Q^n-P_n^n\|_{\mathrm{TV}}\le (2n)^{-1/2}$, and hence
\[
  Q^\infty(\tau\le n)=Q^n(B_n)
  \le P_n^n(B_n)+\|Q^n-P_n^n\|_{\mathrm{TV}}
  \le \alpha+\sqrt{\frac1{2n}}.
\]
Since $\{\tau\le n\}\uparrow\{\tau<\infty\}$, continuity from below yields
\[
  Q^\infty(\tau<\infty)
  =\lim_{n\to\infty}Q^\infty(\tau\le n)
  \le\alpha.
\]
Thus no level-$\alpha$ test can have power one against $Q$.

Now assume $d>0$ and $\Phi_\Pcal$ is weakly lower semicontinuous at $Q$.  Choose a finite number $r$ with $0<r<d$; this covers both finite and infinite $d$.  By lower semicontinuity, the set
\[
  O_r:=\{R\in\mathcal M_1(\X):\Phi_\Pcal(R)>r\}
\]
is a weak neighborhood of $Q$.  Let $d_{\mathrm{BL}}$ be a bounded-Lipschitz metric that generates the weak topology, and choose $\delta>0$ such that the closed ball
\[
  C:=\overline B^{\mathrm{BL}}_\delta(Q)
\]
is contained in $O_r$.  The set $C$ is weakly closed and convex.  Define $A_n:=\{\widehat Q_n\in C\}\subseteq\X^n$ and $\beta_n:=\sup_{P\in\Pcal}P^n(A_n)$.  Lemma~\ref{lem:convex-empirical} gives, for every $P\in\Pcal$,
\[
  P^n(A_n)
  \le \exp\left(-n\inf_{R\in C}\KL(R\|P)\right).
\]
Taking the supremum over $P\in\Pcal$ and using $C\subseteq O_r$ yields
\[
  \beta_n\le \exp\left(-n\inf_{R\in C}\Phi_\Pcal(R)\right)\le e^{-nr}.
\]
Choose $N$ such that $\sum_{n\ge N}e^{-nr}\le\alpha$, and define
\[
  \tau:=\inf\{n\ge N:\widehat Q_n\in C\}.
\]
Under $Q^\infty$, the empirical measures $\widehat Q_n\Rightarrow Q$ almost surely, so $\widehat Q_n\in C$ eventually and $Q^\infty(\tau<\infty)=1$.  Under any $P\in\Pcal$,
\[
  P^\infty(\tau<\infty)
  \le \sum_{n\ge N}P^n(A_n)
  \le \sum_{n\ge N}\beta_n
  \le\alpha.
\]
This proves the existence of a level-$\alpha$ sequential test with power one against $Q$.
\end{proof}

\section{Power-One Sequential Tests against Composite \texorpdfstring{$\mathcal Q$}{Q}}\label{sec:composite}
We next pass from a single alternative to a possibly uncountable alternative class.  Let $\Pcal\subseteq\mathcal M_1(\X)$ be a nonempty null and let $\Qcal\subseteq\mathcal M_1(\X)$ be an alternative class.  A stopping time $\tau$ has power one against $\Qcal$ if $Q^\infty(\tau<\infty)=1$ for every $Q\in\Qcal$, and it has level at most $\alpha$ uniformly over $\Pcal$ if $\sup_{P\in\Pcal}P^\infty(\tau<\infty)\le\alpha$.  The first observation is that an alternative with zero raw KL-infimum cannot be included in any power-one class.

\begin{proposition}\label{prop:composite-necessary}
If there exists some $Q_0\in\Qcal$ with $\KLinf(Q_0, \Pcal)=0$, then for every single $\alpha\in(0,1)$ and every stopping time $\tau$, we have that $\sup_{P\in\Pcal}P^\infty(\tau<\infty)\le \alpha$ implies that $Q_0^\infty(\tau<\infty)\le \alpha$. As such, no level-$\alpha<1$ sequential test can have power one simultaneously against every $Q\in\Qcal$.   
\end{proposition}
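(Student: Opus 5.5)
This is a direct reduction to part 1 of Proposition~\ref{prop:seq-power-one}. I read $\KLinf(Q_0,\Pcal)$ as the raw KL infimum $\Phi_\Pcal(Q_0)=\inf_{P\in\Pcal}\KL(Q_0\|P)$. Fix $\alpha\in(0,1)$ and a stopping time $\tau$ with $\sup_{P\in\Pcal}P^\infty(\tau<\infty)\le\alpha$.

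The plan is to repeat the first half of the proof of Proposition~\ref{prop:seq-power-one}, recording the quantitative bound rather than only the conclusion ``not power one''.

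\begin{enumerate}
\item Since $\Phi_\Pcal(Q_0)=0$, for each $n\ge1$ choose $P_n\in\Pcal$ with $\KL(Q_0\|P_n)\le n^{-2}$.
\item Tensorization gives $\KL(Q_0^n\|P_n^n)\le n^{-1}$.
\item Pinsker's inequality \eqref{eq:pink} then gives $\|Q_0^n-P_n^n\|_{\TV}\le(2n)^{-1/2}$.
\item Since $\{\tau\le n\}\in\mathcal F_n$, the probability of this event under an i.i.d.\ law depends only on the $n$-fold marginal. Hence
\[
Q_0^\infty(\tau\le n)\le P_n^\infty(\tau\le n)+(2n)^{-1/2}\le \alpha+(2n)^{-1/2}.
\]
\item Letting $n\to\infty$ and using continuity from below along $\{\tau\le n\}\uparrow\{\tau<\infty\}$ yields $Q_0^\infty(\tau<\infty)\le\alpha$.
\end{enumerate}

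For the final assertion, suppose a level-$\alpha$ test had power one against every $Q\in\Qcal$. Then in particular $Q_0^\infty(\tau<\infty)=1$. The bound above would force $1\le\alpha<1$, a contradiction.

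There is no genuine obstacle. The one point needing care is that the event $\{\tau\le n\}$ lies in $\mathcal F_n$, which is what lets us identify $P^\infty(\tau\le n)$ with $P^n(B_n)$ for $B_n=\{\tau\le n\}$ and apply the total variation bound on $\X^n$. The case $\Phi_\Pcal(Q_0)=0$ attained exactly at some $P$ is also covered, since then we may simply take $P_n=P$ for all $n$.
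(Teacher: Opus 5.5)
Your proposal is correct and takes the paper's approach: the paper proves this by applying part~1 of Proposition~\ref{prop:seq-power-one} to $Q_0$, and your tensorization--Pinsker--continuity-from-below argument is exactly the one in that proof. Writing out the quantitative bound $Q_0^\infty(\tau<\infty)\le\alpha$, instead of citing only the statement of part~1 (which asserts only that power one is impossible), is sensible, since that bound is what the statement here requires and what the proof of part~1 actually establishes.
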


\begin{proof}
This is Proposition~\ref{prop:seq-power-one} applied to the single alternative $Q_0$.
\end{proof}

Thus pointwise positivity of $\Phi_\Pcal$ on $\Qcal$ is necessary for power-one testing.  The following theorem gives a sufficient condition that does not require any bound on the cardinality of $\Qcal$ or any uniform KL separation.  The only topological input is that, because $\X$ is Polish, $\mathcal M_1(\X)$ with the weak topology is a separable metrizable space and hence Lindel{\"o}f \parencite{Good1995,steen1978counterexamples}.  We isolate this countability reduction first.

\begin{lemma}\label{lem:lindelof}
Every open cover of $\Qcal\subseteq\mathcal M_1(\X)$ in the weak topology has a countable subcover.    
\end{lemma}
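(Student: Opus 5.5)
The plan is to show that $\mathcal M_1(\X)$ with the weak topology is second countable, and then to pass the Lindel\"of property to the subspace $\Qcal$.

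\emph{Step 1 (metrizability and separability).} Because $\X$ is Polish, the weak topology on $\mathcal M_1(\X)$ is generated by a metric. The bounded-Lipschitz metric $d_{\mathrm{BL}}$ used in the proof of Proposition~\ref{prop:seq-power-one} works, and so does the L\'evy--Prokhorov metric.

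Separability comes from a countable dense set. Take a countable dense set $D\subseteq\X$. Then let $\mathcal D$ be the set of finite convex combinations $\sum_{i\le k}q_i\delta_{x_i}$ with rational weights $q_i\ge0$ and atoms $x_i\in D$. To see that $\mathcal D$ is dense, fix $R\in\mathcal M_1(\X)$:
\begin{itemize}
\item Empirical measures of an i.i.d.\ $R$-sample converge weakly to $R$ almost surely (Varadarajan), so $R$ is a weak limit of finitely supported measures.
\item Perturbing the atoms into $D$ and the weights to rationals changes $d_{\mathrm{BL}}$ by an arbitrarily small amount.
\end{itemize}
Alternatively, this step can simply be cited from standard references, as the text preceding the lemma already does.

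\emph{Step 2 (second countability).} A separable metric space is second countable. A countable base is given by the open $d_{\mathrm{BL}}$-balls with centers in $\mathcal D$ and rational radii. To check this, fix $x\in U$ with $U$ open, and pick a rational $\rho$ with $B_{2\rho}(x)\subseteq U$. Choose a center $y\in\mathcal D$ with $d(x,y)<\rho$. Then $x\in B_\rho(y)\subseteq B_{2\rho}(x)\subseteq U$.

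\emph{Step 3 (hereditary Lindel\"of).} Let $\{U_j\}_{j\in J}$ be a family of weakly open subsets of $\mathcal M_1(\X)$ covering $\Qcal$. If the cover is given by sets relatively open in $\Qcal$, first write each of them as $U_j\cap\Qcal$ with $U_j$ open in $\mathcal M_1(\X)$.

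Let $\mathcal B=\{B_m\}_{m\ge1}$ be the countable base from Step 2. Set
\[
M:=\{m:\ B_m\subseteq U_j\text{ for some } j\}.
\]
For each $m\in M$, choose one such index $j(m)$; this uses only countable choice. Now take any $Q\in\Qcal$. It lies in some $U_j$, so there is an $m$ with $Q\in B_m\subseteq U_j$. Hence $m\in M$ and $Q\in U_{j(m)}$. Therefore $\{U_{j(m)}\}_{m\in M}$ is a countable subcover of $\Qcal$.

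Steps 2 and 3 are routine. The only substantive input is the separability in Step 1, and for that I would rely on the cited standard fact rather than reprove it.
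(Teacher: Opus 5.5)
Your proposal is correct and follows the same route as the paper: because $\X$ is Polish, the weak topology on $\mathcal M_1(\X)$ is separable and metrizable, hence second countable, and so $\Qcal$ is Lindel\"of. Where the paper simply cites the standard facts (subspaces of second-countable spaces are second countable, and second-countable spaces are Lindel\"of), you prove them explicitly with a concrete countable base and a direct subcover extraction.
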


\begin{proof}
Since $\X$ is Polish, the weak topology on $\mathcal M_1(\X)$ is metrizable and separable.  Every separable metric space is second countable, every subspace of a second-countable space is second countable, and every second-countable space is Lindel{\"o}f.  Hence every open cover of $\Qcal$ has a countable subcover.
\end{proof}

We now construct the uniform power-one test from a countable cover of KL-separated neighborhoods.

\begin{theorem}\label{thm:mainy}
Assume that the following hold.
\begin{enumerate}
    \item For every $Q\in\Qcal$, $\Phi_\Pcal(Q)>0$; the value $+\infty$ is allowed.
    \item For every $Q\in\Qcal$, the map $R\mapsto \Phi_\Pcal(R)$ is weakly lower semicontinuous at $Q$.
\end{enumerate}
Then, for every $\alpha\in(0,1)$, there exists a single stopping rule $\tau$ such that
\[
  \sup_{P\in\Pcal}P^\infty(\tau<\infty)\le\alpha
  \qquad\text{and}\qquad
  Q^\infty(\tau<\infty)=1\quad\text{for all }Q\in\Qcal.
\]
\end{theorem}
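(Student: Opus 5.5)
The plan is to localize exactly as in the proof of Proposition~\ref{prop:seq-power-one}, and then glue the countably many local tests together using Lemma~\ref{lem:lindelof} and a summable allocation of the error budget.

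\emph{Local neighborhoods.} For each $Q\in\Qcal$, fix a finite $r_Q$ with $0<r_Q<\Phi_\Pcal(Q)$. By weak lower semicontinuity at $Q$, the set $\{R:\Phi_\Pcal(R)>r_Q\}$ is a weak neighborhood of $Q$. So we can choose $\delta_Q>0$ such that the closed bounded-Lipschitz ball $C_Q:=\overline B^{\mathrm{BL}}_{\delta_Q}(Q)$ lies inside it. Let $U_Q$ be the open ball $B^{\mathrm{BL}}_{\delta_Q}(Q)$. The family $\{U_Q\}_{Q\in\Qcal}$ is a weak open cover of $\Qcal$, so Lemma~\ref{lem:lindelof} yields a countable subfamily $Q_1,Q_2,\dots$ with $\Qcal\subseteq\bigcup_j U_{Q_j}$. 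Write $C_j:=C_{Q_j}$ and $r_j:=r_{Q_j}$; if $\Qcal$ is empty there is nothing to prove.

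\emph{Allocating the error.} Each $C_j$ is weakly closed and convex, and $\Phi_\Pcal>r_j$ on $C_j$. Lemma~\ref{lem:convex-empirical} therefore gives, for every $P\in\Pcal$ and every $n$,
\[
P^n(\widehat Q_n\in C_j)\le e^{-nr_j}.
\]
Choose $N_j$ with $\sum_{n\ge N_j}e^{-nr_j}\le \alpha 2^{-j}$ and set $\tau_j:=\inf\{n\ge N_j:\widehat Q_n\in C_j\}$. Then define $\tau:=\inf_j\tau_j$. This is a stopping time, since
\[
\{\tau\le n\}=\bigcup_j\{\tau_j\le n\},
\]
which is a countable union of $\mathcal F_n$-events. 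Equivalently, $\tau$ is the first $n$ at which some $j$ with $N_j\le n$ has $\widehat Q_n\in C_j$. A union bound then gives, for every $P\in\Pcal$,
\[
P^\infty(\tau<\infty)\le\sum_j P^\infty(\tau_j<\infty)\le\sum_j\alpha2^{-j}=\alpha.
\]

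\emph{Power.} For any $Q\in\Qcal$, pick $j$ with $Q\in U_{Q_j}$, so that $d_{\mathrm{BL}}(Q,Q_j)<\delta_{Q_j}$. Varadarajan's theorem gives $\widehat Q_n\Rightarrow Q$ $Q^\infty$-almost surely. Hence almost surely $d_{\mathrm{BL}}(\widehat Q_n,Q_j)<\delta_{Q_j}$ for all large $n$, which means $\widehat Q_n\in C_j$ eventually. Therefore $\tau_j<\infty$, and so $\tau<\infty$, almost surely.

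\emph{Main obstacle.} The key point is that power for an arbitrary $Q$ requires $Q$ to sit in the \emph{open} interior of some chosen closed ball. This is why the cover is taken with the open balls, while the tests use the closed convex balls needed for Csisz\'ar's bound. The remaining step, measurability of the event $\{\widehat Q_n\in C_j\}$, is already implicit in Lemma~\ref{lem:convex-empirical}: the map $x\mapsto\widehat Q_n$ is continuous, so this event is closed in $\X^n$.
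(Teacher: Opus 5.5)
Your proof is correct and follows the paper's argument essentially step for step: local KL barriers from weak lower semicontinuity, closed convex bounded-Lipschitz balls for Csisz\'ar's bound, a Lindel\"of countable subcover by the open balls, and a union bound with budgets $\alpha 2^{-j}$. The only difference is cosmetic: you stop when $\widehat Q_n$ enters the closed ball $C_j$, whereas the paper stops on entry to the open ball $U_j$ and bounds that event by the closed-ball event anyway. Your remarks on the measurability of $\{\widehat Q_n\in C_j\}$ and on $\tau$ being a stopping time fill in details the paper leaves implicit.
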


\begin{proof}
Fix $\alpha\in(0,1)$ and choose positive numbers $(\alpha_j)_{j\ge1}$ with $\sum_{j\ge1}\alpha_j\le\alpha$, for instance $\alpha_j=\alpha2^{-j}$.  Let $d_{\mathrm{BL}}$ be a bounded-Lipschitz metric that generates the weak topology on $\mathcal M_1(\X)$.

For each $Q\in\Qcal$, choose a finite $r_Q$ with $0<r_Q<\Phi_\Pcal(Q)$.  Since $\Phi_\Pcal$ is weakly lower semicontinuous at $Q$, the set
\[
  V_Q:=\{R:\Phi_\Pcal(R)>r_Q\}
\]
is a weak neighborhood of $Q$.  Choose $\delta_Q>0$ such that the closed bounded-Lipschitz ball
\[
  \overline U_Q:=\overline B^{\mathrm{BL}}_{\delta_Q}(Q)
\]
is contained in $V_Q$, and write $U_Q:=B^{\mathrm{BL}}_{\delta_Q}(Q)$.  The set $\overline U_Q$ is weakly closed and convex, and
\[
  \inf_{R\in\overline U_Q}\Phi_\Pcal(R)\ge r_Q>0.
\]
The family $\{U_Q:Q\in\Qcal\}$ is an open cover of $\Qcal$.  By Lemma~\ref{lem:lindelof}, it has a countable subcover, which we denote by $\{U_j:j\ge1\}$.  Write $\overline U_j$ and $r_j$ for the corresponding closed ball and rate.

For each $j,n$, define
\[
  A_{j,n}:=\{\widehat Q_n\in U_j\},
  \qquad
  \beta_{j,n}:=\sup_{P\in\Pcal}P^n(A_{j,n}).
\]
Because $A_{j,n}\subseteq\{\widehat Q_n\in\overline U_j\}$ and $\overline U_j$ is weakly closed and convex, Lemma~\ref{lem:convex-empirical} gives
\[
  P^n(A_{j,n})
  \le P^n(\widehat Q_n\in\overline U_j)
  \le \exp\left(-n\inf_{R\in\overline U_j}\KL(R\|P)\right)
\]
for every $P\in\Pcal$.  Therefore
\[
  \beta_{j,n}
  \le \exp\left(-n\inf_{R\in\overline U_j}\Phi_\Pcal(R)\right)
  \le e^{-nr_j}.
\]
Choose $N_j$ large enough that $\sum_{n\ge N_j}e^{-nr_j}\le\alpha_j$, and define
\[
  \tau_j:=\inf\{n\ge N_j:\widehat Q_n\in U_j\}.
\]
Then, for every $P\in\Pcal$,
\[
  P^\infty(\tau_j<\infty)
  \le \sum_{n\ge N_j}P^n(A_{j,n})
  \le \alpha_j.
\]
If $Q\in U_j$, then $\widehat Q_n\Rightarrow Q$ almost surely under $Q^\infty$, and hence $\widehat Q_n\in U_j$ eventually.  Thus $Q^\infty(\tau_j<\infty)=1$ for every $Q\in U_j$.

Finally set $\tau:=\inf_{j\ge1}\tau_j$.  For any $P\in\Pcal$,
\[
  P^\infty(\tau<\infty)
  \le \sum_{j\ge1}P^\infty(\tau_j<\infty)
  \le \sum_{j\ge1}\alpha_j
  \le\alpha.
\]
For any $Q\in\Qcal$, choose $j$ such that $Q\in U_j$.  Then $Q^\infty(\tau_j<\infty)=1$ and $\tau\le\tau_j$, so $Q^\infty(\tau<\infty)=1$.
\end{proof}

\noindent 
\section{Weak lower semicontinuity not necessary for power-one testing}\label{sec:notnec}

The next example shows that weak lower semicontinuity is not necessary for power-one testing.  The alternative is separated from the null by a discontinuous finite-sample event even though weak neighborhoods do not detect the separation.

\begin{proposition}\label{prop:weaklsc-not-necessary}
Let $\X=[0,1]$ on the Borel $\sigma$-algebra. Let $Q=\delta_0$ and, for $k\in\mathbb N$, define $P_k:=\frac12\delta_0 +\frac12\delta_{1/k}$. Set $\Pcal:=\{P_k:k\in\mathbb N\}$. Then
\begin{enumerate}
    \item $\KLinf(Q,\Pcal)=\log 2>0$.
    \item $\Phi(R)=\inf_{P\in\Pcal}\KL(R\|P)$ is not weakly lower semicontinuous at $Q$.
    \item For every $\alpha\in(0,1)$, there exists a level $\alpha$ sequential test with power-one against $Q$.
    \item There exists an e-process $(E_n)_{n\ge 0}$ for $\Pcal$ with exact growth rate $\log 2$ under $Q$.
\end{enumerate}
\end{proposition}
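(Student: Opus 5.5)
The four items are handled one at a time, each by direct computation using the explicit form of $P_k$.

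\emph{Item 1.} Since $\delta_0\ll P_k$ with $d\delta_0/dP_k(0)=2$, we get $\KL(\delta_0\|P_k)=\log 2$ for every $k$. Taking the infimum over $k$ gives $\KLinf(Q,\Pcal)=\Phi_\Pcal(Q)=\log 2$.

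\emph{Item 2.} Take $R_k:=P_k$. Then $R_k\Rightarrow \tfrac12\delta_0+\tfrac12\delta_0=\delta_0=Q$. At the same time $\Phi_\Pcal(R_k)\le\KL(P_k\|P_k)=0$. Hence
\[
\liminf_k\Phi_\Pcal(R_k)=0<\log 2=\Phi_\Pcal(Q),
\]
which violates weak lower semicontinuity at $Q$.

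\emph{Items 3 and 4.} Both use the discontinuous event that all observations equal $0$. Under $Q^\infty$ this event has probability one for every $n$. Under $P_k^\infty$ it has probability $2^{-n}$, because each $P_k$ places mass exactly $\tfrac12$ on the singleton $\{0\}$.

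For item 3, choose $N$ with $\sum_{n\ge N}2^{-n}\le\alpha$, i.e. $2^{1-N}\le\alpha$. In fact the simpler stopping rule
\[
\tau:=N \text{ if } X_1=\dots=X_N=0,\qquad \tau:=\infty \text{ otherwise}
\]
already works. It is $\mathcal F_N$-measurable, so it is a stopping time. Its level is $P_k^\infty(\tau<\infty)=2^{-N}\le\alpha$ uniformly in $k$, and $Q^\infty(\tau<\infty)=1$.

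For item 4, define
\[
E_n:=\prod_{i=1}^n 2\,\one\{X_i=0\}=2^n\one\{X_1=\dots=X_n=0\},\qquad E_0=1.
\]
Let $f(x)=2\,\one\{x=0\}$. Then $E_P f=1$ for every $P\in\Pcal$, since $P(\{0\})=\tfrac12$. Therefore $E_n$ is a nonnegative $P^\infty$-martingale with initial value $1$ for each $P\in\Pcal$. A test martingale under every null is in particular an $e$-process: optional stopping gives $E_P[E_\sigma]\le 1$ for every stopping time $\sigma$. Under $Q$ we have $E_n=2^n$ almost surely, so $\frac1n\log E_n=\log 2$ exactly.

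No step is a real obstacle. The one point needing care is that the $e$-process property must hold simultaneously for all $P\in\Pcal$. This holds because the mass at $0$ is the same value $\tfrac12$ for every $P_k$, even though $\Pcal$ is not weakly closed, since its weak closure adds $\delta_0=Q$.

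It is also worth remarking why this does not contradict Lemma~\ref{lem:convex-empirical}. The set $\{R: R(\{0\})=1\}=\{\delta_0\}$ is weakly closed and convex, but its infimal KL over $\Pcal$ equals $\log 2$, so the Sanov bound there is fine. What fails is that no weak neighborhood of $Q$ has positive $\Phi_\Pcal$-infimum, so the mechanism of Theorem~\ref{thm:mainy} cannot be applied, while the point-mass event still separates $Q$ from the null.
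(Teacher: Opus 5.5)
Your proposal is correct and matches the paper's proof essentially step for step. It uses the same computation $\KL(\delta_0\|P_k)=\log 2$, the same witness sequence $P_k\Rightarrow\delta_0$ with $\Phi(P_k)=0$, the same fixed-time test that rejects when $X_1=\cdots=X_N=0$, and the same test martingale $2^n\one\{X_1=\cdots=X_n=0\}$; the paper additionally remarks that $\log 2$ is optimal by the variational inequality, which the statement itself does not require.
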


\begin{proof}
For the first claim, fix $k$. Since $P_k(\{0\})=1/2$, hence $Q=\delta_0\ll P_k$. Therefore
\[
\KL(Q\|P_k)=\int\log\Bigl(\frac{dQ}{dP_k}\Bigr)dQ =\log\Bigl(\frac{Q(\{0\})}{P_k(\{0\})}\Bigr) =\log\Bigl(\frac{1}{1/2}\Bigr)=\log 2.
\]
Taking the infimum over $k$ gives $\KLinf(Q,\Pcal)=\log 2$. For the second claim, we have that $P_k\Rightarrow Q$ weakly since for every $f\in C_b([0,1])$,
\[
\int fdP_k=\tfrac12 f(0)+\tfrac12 f(1/k)\to \tfrac12 f(0)+\tfrac12 f(0)=\int fdQ.
\]
For each $k$, $\Phi(P_k)=0$ because $\KL(P_k\|P_k)=0$. As shown above, $\Phi(Q)=\log 2>0$. Therefore $\Phi(Q)=\log 2\not\le \liminf_{k\to\infty}\Phi(P_k)=0$. Thus $\Phi$ is not weakly lower semicontinuous at $Q$. For the third claim, consider some $\alpha\in(0,1)$ and choose $m$ with $2^{-m}\le\alpha$. Let $A_m:=\{(0,\ldots,0)\}\subseteq[0,1]^m$ and define
\[
\tau:=
\begin{cases}m,&\text{if }(X_1,\dots,X_m)\in A_m,\\
\infty,&\text{otherwise.}
\end{cases}
\]
Under $Q^\infty$ we have that $X_i=0$ with probability one for all $i$, so $\tau=m$ almost surely and the power is one. Note that under any $P_k^\infty$, we have $P_k^\infty(\tau<\infty)=P_k^m(A_m)=2^{-m}\le \alpha$. It follows that the level is at most $\alpha$ uniformly over $\Pcal$. For the fourth claim, define $E_0:=1$ and for $n\ge 1$,
\[
E_n:=2^n\one\{X_1=\cdots=X_n=0\}.
\]
For each $k$ and each $n\ge 1$, we have that,
\begin{align*}
\E_{P_k^\infty}[E_n\mid\mathcal F_{n-1}]&=\E_{P_k^\infty}\left[2^n\one\{X_1=\cdots=X_{n-1}=0\}\one\{X_n=0\}\middle|\mathcal F_{n-1}\right]\\
&=2^n\one\{X_1=\cdots=X_{n-1}=0\}P_k(X_n=0)\\
&=2^n\one\{X_1=\cdots=X_{n-1}=0\}\cdot\tfrac12=E_{n-1}.
\end{align*}

\noindent Therefore $(E_n)$ is a martingale under each $P_k$ and is an $e$-process for $\Pcal$. Under $Q^\infty$, $E_n=2^n$ with probability one. Thus $(1/n)\log E_n=\log 2$. This is the optimal raw KL-infimum rate: for any $e$-process $M$ valid for $\Pcal$, the variational inequality gives $\E_Q\log M_n\le n\inf_k\KL(Q\|P_k)=n\log2$.
\end{proof}

\noindent Proposition~\ref{prop:weaklsc-not-necessary} illustrates why weak lower semicontinuity should be viewed as a sufficient weak-topological condition rather than a necessary condition.  Atoms and other discontinuous measurable features may yield power-one tests even when no weak neighborhood supplies a KL barrier.

\section{Aggregating tests to a consistent e-process}\label{sec:eproc}

The preceding stopping rules can be aggregated into one nonnegative adapted process that is valid under the null and diverges under every alternative.  The following lemma, in the spirit of \textcite{ruf2023composite}, converts a countable sequence of power-one tests into a divergent $e$-process.

\begin{lemma}\label{lem:count-tests-to-eprocess}
Suppose that there exists a sequence $(\alpha_k)_{k\ge 1}\subset(0,1)$ such that $\sum_{k=1}^{\infty}\alpha_k\le 1$.  For each $k\ge1$, suppose there exists a stopping rule $\tau_k$ satisfying $\sup_{P\in\Pcal}P^{\infty}(\tau_k<\infty)\le \alpha_k$ and $Q^{\infty}(\tau_k<\infty)=1$ for every $Q\in\Qcal$. Define $\sigma_k:=\tau_k\vee k$ and for $n\ge 0$,
\[
E_n:=\sum_{k=1}^{\infty}\one\{\sigma_k\le n\}.
\]
Then $(E_n)_{n\ge 0}$ is a nondecreasing $e$-process for $\Pcal$ and for every $Q\in\Qcal$,
\[
Q^{\infty}\left(\lim_{n\to\infty}E_n=\infty\right)=1.
\]
\end{lemma}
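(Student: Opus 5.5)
We prove the three properties in turn: adaptedness and monotonicity, validity as an $e$-process, and divergence under each $Q\in\Qcal$.

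\textbf{Adaptedness and monotonicity.} Each $\sigma_k=\tau_k\vee k$ is a stopping time, so $\{\sigma_k\le n\}\in\mathcal F_n$. Moreover $\sigma_k\ge k$, so $\one\{\sigma_k\le n\}=0$ for every $k>n$. Hence
\[
E_n=\sum_{k=1}^{n}\one\{\sigma_k\le n\}
\]
is a finite sum of $\mathcal F_n$-measurable indicators, taking values in $\{0,\dots,n\}$. Each indicator $\one\{\sigma_k\le n\}$ is nondecreasing in $n$, so $E_n$ is nonnegative, adapted and nondecreasing, with $E_0=0$. The truncation $\vee k$ is used only to make $E_n$ finite and adapted at each fixed time.

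\textbf{Validity.} We must show $\E_{P^\infty}[E_\rho]\le 1$ for every $P\in\Pcal$ and every stopping time $\rho$, possibly infinite, where $E_\infty:=\lim_n E_n$ exists by monotonicity. Since $E$ is nondecreasing, it suffices to bound $E_\infty=\sum_k\one\{\sigma_k<\infty\}$. By Tonelli,
\[
\E_{P^\infty}[E_\infty]=\sum_k P^\infty(\sigma_k<\infty)=\sum_k P^\infty(\tau_k<\infty)\le\sum_k\alpha_k\le 1.
\]
Therefore $\E_P[E_\rho]\le\E_P[E_\infty]\le 1$ uniformly over $P\in\Pcal$. This is the $e$-process property, since $E_n$ is dominated by an integrable random variable of mean at most one.

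\textbf{Divergence.} Fix $Q\in\Qcal$. For each $k$ we have $Q^\infty(\sigma_k<\infty)=Q^\infty(\tau_k<\infty)=1$. A countable intersection of almost sure events is almost sure, so $Q^\infty$-a.s.\ every $\sigma_k$ is finite. On that event $E_\infty=\sum_k 1=\infty$, so $E_n\to\infty$ almost surely.

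\textbf{Main obstacle.} The only delicate point is making sure the definition of $e$-process used, namely $\E_P[E_\rho]\le1$ over all stopping times, covers unbounded and infinite $\rho$. Monotonicity of $E$ handles this by reducing every case to the single bound on $E_\infty$.
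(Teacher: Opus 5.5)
Your proof is correct and follows the paper's argument: $\sigma_k\ge k$ gives finiteness and adaptedness, Tonelli together with $P^\infty(\sigma_k<\infty)=P^\infty(\tau_k<\infty)\le\alpha_k$ gives validity, and a countable intersection of almost-sure events gives divergence. The only difference is that you bound $E_\rho$ directly by $E_\infty=\sum_k\one\{\sigma_k<\infty\}$ using monotonicity, where the paper passes through $E_{\rho\wedge m}$ and monotone convergence; this is a slight streamlining of the same bound.
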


\begin{proof}
First, $(E_n)$ is well-defined and adapted. For each $n\ge0$, $\one\{\sigma_k\le n\}=0$ whenever $k>n$ because $\sigma_k\ge k$. It follows that,
\[
E_n=\sum_{k=1}^n \one\{\sigma_k\le n\}\le n.
\]
Thus $E_n$ is finite for every $n$.  Since $\sigma_k$ is a stopping rule it follows that $\{\sigma_k\le n\}\in \mathcal F_n$, so $E_n$ is $\mathcal F_n$ measurable. Because each indicator $n\mapsto \one\{\sigma_k\le n\}$ is nondecreasing, the process $(E_n)$ is nondecreasing.  We next show that the $e$-process property holds. To this end, take any $P\in\Pcal$ and any stopping rule $\tau$. Because $(E_n)$ is nondecreasing, it follows that $E_\tau=\lim_{m\to\infty}E_{\tau\wedge m}$. Also, for every $m\ge 1$ we know that,
\[
E_{\tau\wedge m}=\sum_{k=1}^{\infty}\one\{\sigma_k\le \tau\wedge m\},
\]
and all terms are necessarily nonnegative. By monotone convergence and Tonelli's theorem,
\begin{align*}
\E_{P^\infty}[E_\tau]&=\E_{P^\infty}\left[\lim_{m\to\infty}E_{\tau\wedge m}\right]\\
&=\lim_{m\to\infty}\E_{P^\infty}[E_{\tau\wedge m}]\\
&= \lim_{m\to\infty}\E_{P^\infty}\left[\sum_{k=1}^{\infty}\one\{\sigma_k\le \tau\wedge m\}\right]\\
&= \lim_{m\to\infty}\sum_{k=1}^{\infty}P^{\infty}(\sigma_k\le \tau\wedge m).
\end{align*}
For each $k$ and $m$, $\{\sigma_k\le \tau\wedge m\}\subseteq \{\sigma_k<\infty\}$. So, $P^{\infty}(\sigma_k\le \tau\wedge m)\le P^{\infty}(\sigma_k<\infty)$. Using this bound and the definition of $\sigma_k$ gives,
\[
\E_{P^\infty}[E_\tau]\le \sum_{k=1}^{\infty}P^{\infty}(\sigma_k<\infty) = \sum_{k=1}^{\infty} P^{\infty}(\tau_k<\infty)\le \sum_{k=1}^{\infty} \alpha_k \le 1.
\]
Since $P\in\Pcal$ and $\tau$ were arbitrary, $(E_n)$ is an $e$-process for $\Pcal$. It remains to prove divergence under each $Q\in\Qcal$. Fix such a $Q$.  By assumption, for every $k\ge 1$, $Q^{\infty}(\tau_k<\infty)=1$. Since $\sigma_k=\tau_k\vee k$, for every $k\ge 1$, $Q^{\infty}(\sigma_k<\infty)=1$ also. Taking a countable intersection gives an event $A$ with $Q^{\infty}(A)=1$ such that for every $\omega\in A$ and every $k\ge1$, $\sigma_k(\omega)<\infty$. For $\omega\in A$ and $m\ge1$, let $M_m(\omega):=\max_{1\le k\le m}\sigma_k(\omega)<\infty$. Whenever $n\ge M_m(\omega)$,
\[
E_n(\omega)=\sum_{k=1}^\infty \one\{\sigma_k(\omega)\le n\} \ge \sum_{k=1}^m \one\{\sigma_k(\omega)\le n\}=m.
\]
Since this holds for every $m\ge1$, $\lim_{n\to\infty}E_n(\omega)=\infty$. Therefore, 
\[
Q^{\infty}\left(\lim_{n\to\infty}E_n=\infty\right)=1.
\]
This proves the claim.
\end{proof}

\begin{corollary}\label{cor:main-implies-eproc}
Suppose that the assumptions of Theorem~\ref{thm:mainy} hold.  For each $k\ge1$, let $\alpha_k:=2^{-k}$. Choose a stopping rule $\tau_k$ given by Theorem~\ref{thm:mainy} with level at most $\alpha_k$ and power one against all $Q\in\Qcal$. Then the process $\sigma_k:=\tau_k\vee k$ and,
\[
E_n:=\sum_{k=1}^{\infty}\one\{\sigma_k\le n\},
\]
is a nondecreasing $e$-process for $\Pcal$ such that for every $Q\in\Qcal$,
\[
Q^{\infty}\left(\lim_{n\to\infty} E_n=\infty\right)=1.
\]
\end{corollary}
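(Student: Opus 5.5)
This corollary follows by combining Theorem~\ref{thm:mainy} with Lemma~\ref{lem:count-tests-to-eprocess}; almost no new argument is needed.

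\textbf{Step 1 (building the tests).} Set $\alpha_k:=2^{-k}$. Each $\alpha_k$ lies in $(0,1)$, and
\[
\sum_{k\ge1}\alpha_k=1\le 1 .
\]
The hypotheses of Theorem~\ref{thm:mainy} concern only $\Pcal$ and $\Qcal$: positivity and weak lower semicontinuity of $\Phi_\Pcal$ at every $Q\in\Qcal$. They do not depend on the level. So for each $k$ we may apply Theorem~\ref{thm:mainy} with $\alpha=\alpha_k$ and obtain a stopping rule $\tau_k$ with
\[
\sup_{P\in\Pcal}P^\infty(\tau_k<\infty)\le\alpha_k
\quad\text{and}\quad
Q^\infty(\tau_k<\infty)=1 \text{ for all } Q\in\Qcal .
\]
Choosing one such $\tau_k$ for each $k$ is a countable sequence of choices. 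Alternatively, the proof of Theorem~\ref{thm:mainy} is constructive and can be run with level $\alpha_k$, reusing the same countable subcover and changing only the thresholds $N_j$.

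\textbf{Step 2 (applying the lemma).} The sequence $(\alpha_k)$ and the stopping rules $(\tau_k)$ now satisfy exactly the hypotheses of Lemma~\ref{lem:count-tests-to-eprocess}. That lemma, with $\sigma_k=\tau_k\vee k$, gives the conclusion directly:
\begin{itemize}
\item $E_n=\sum_k\one\{\sigma_k\le n\}$ is a nondecreasing $e$-process for $\Pcal$;
\item $Q^\infty(E_n\to\infty)=1$ for every $Q\in\Qcal$.
\end{itemize}

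\textbf{Main obstacle.} Nothing here is delicate. The one point to state explicitly is that the level in Theorem~\ref{thm:mainy} is arbitrary in $(0,1)$, so it may be taken to be $2^{-k}$. One should also note that each $\tau_k$ is a genuine $(\mathcal F_n)$-stopping time, so that $\sigma_k=\tau_k\vee k$ is one as well. This holds because $\tau_k$ is an infimum over countably many first-entrance times into the events $\{\widehat Q_n\in U_j\}\in\mathcal F_n$.
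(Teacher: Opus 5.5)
Your proposal is correct and is exactly the paper's argument: instantiate Theorem~\ref{thm:mainy} at level $\alpha_k=2^{-k}$ for each $k$, note that $\sum_{k\ge1}2^{-k}=1$, and apply Lemma~\ref{lem:count-tests-to-eprocess} to the resulting sequence of stopping rules. Your remark that each $\tau_k$ (and hence $\sigma_k=\tau_k\vee k$) is a genuine stopping time is a harmless extra sanity check; the paper takes it as already given by Theorem~\ref{thm:mainy}.
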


\begin{proof}
We know that $\sum_{k=1}^{\infty} 2^{-k}=1$. The claim follows from Lemma~\ref{lem:count-tests-to-eprocess} applied with $\alpha_k=2^{-k}$.    
\end{proof}

\noindent Theorem~\ref{thm:mainy} does not require uniform KL separation between the null and alternative classes.  It assumes only the pointwise condition $\Phi_\Pcal(Q)>0$ for each $Q\in\Qcal$; one may still have $\inf_{Q\in\Qcal}\Phi_\Pcal(Q)=0$.  Thus alternatives are allowed to approach the null class.

\section{REGROW \texorpdfstring{$e$}{e}-process against \texorpdfstring{$\Pcal^c$}{P-complement}}\label{sec:regrow}

% \section{REGROW e-processes}
% \label{sec:regrow}

\subsection{Why growth rates of e-processes?}

Classical sequential analysis is often formulated in terms of stopping times: one seeks a
stopping rule whose type-I error is controlled and whose stopping time is small, or finite
almost surely, under alternatives.  In the simple-vs-simple case, Wald's sequential
probability ratio test already reveals a deeper structure behind such procedures: the
likelihood ratio is not merely a statistic used at a stopping time, but a nonnegative
process whose value can be monitored continuously and thresholded whenever desired.  Later work of Robbins, Siegmund, Lai, and collaborators
developed power-one sequential tests and studied their stopping-time behavior, often
with the Kullback--Leibler information appearing as the leading exponential or
large-deviation quantity \parencite{darlingrobbins1968,robbins_siegmund_1974,lai1977,farrell1964}.

The modern language of e-values and e-processes isolates exactly the property of the
likelihood-ratio process that is needed for anytime-valid testing.  An e-process for a null
class \(\Pcal\) is a nonnegative adapted process \(E=(E_n)_{n\ge0}\) satisfying
\[
  \sup_{P\in\Pcal} \E_{P^\infty} E_\tau \le 1
\]
for every stopping time \(\tau\).  Consequently, Ville's inequality implies that
\[
  \tau_\alpha := \inf\{n:E_n\ge 1/\alpha\}
\]
is a level-\(\alpha\) sequential test, simultaneously for every \(\alpha\in(0,1)\).  Thus an
e-process is stronger than a single level-\(\alpha\) stopping rule: it is a reusable
evidence process that can be monitored continuously, thresholded at any desired level,
and remains valid under optional stopping and optional continuation.  This is the central
object in safe anytime-valid inference and related game-theoretic approaches to
sequential testing \parencite{shafer2011,ramdas2023,grunwald2024,ruf2023composite}.

For the preceding sections of this paper, divergence of an e-process under an alternative
is enough: if \(E_n\to\infty\) almost surely under \(Q\), then thresholding \(E\) yields
power one against \(Q\).  However, the \emph{power one} property does not distinguish between slow and fast
accumulation of evidence.  Once an e-process is viewed as the wealth process of a bettor
against the null, its long-run log-growth rate
\[
  \frac1n \log E_n
\]
becomes the natural analogue of efficiency.  In the simple-vs-simple case, the likelihood
ratio grows at rate \(\KL(Q\|P)\) under \(Q\), by the law of large numbers.  For composite
nulls and alternatives, this observation leads to growth-rate optimality criteria for
e-values and e-processes, including GRO, GROW, and REGROW
\parencite{grunwald2024,larsson2025numeraire,ramramdas2026optimalbettingwealth}.
The point of REGROW is not to maximize a single worst-case rate over a large alternative
class---which may be zero when alternatives approach the null---but instead to compare
the achieved growth at each fixed \(Q\) to the best rate attainable at that same \(Q\).

There is one important subtlety.  For a composite null \(\Pcal\), the correct pointwise
asymptotic betting-wealth benchmark is not always the raw quantity
\[
  \Phi_\Pcal(Q):=\inf_{P\in\Pcal}\KL(Q\|P).
\]
Recent work of Ram and Ramdas shows that the optimal fixed-\(Q\) asymptotic growth rate
is instead a bipolar inf-KL rate, defined using the effective \(n\)-sample null generated
by all valid nonnegative \(n\)-sample tests \parencite{ramramdas2026optimalbettingwealth}.
This bipolar inf-KL rate can be strictly smaller than \(\Phi_\Pcal(Q)\).  However, the two
rates coincide whenever \(Q\mapsto \Phi_\Pcal(Q)\) is weakly lower semicontinuous at \(Q\);
in particular, weak compactness of \(\Pcal\) is one useful sufficient condition.  Therefore,
under the local lower-semicontinuity assumptions used in this paper, the simpler
quantity \(\Phi_\Pcal(Q)\) remains the correct REGROW target.  The goal of this section is
to construct a single e-process that attains this pointwise optimal asymptotic growth
rate simultaneously over the alternative class.

An important point is that the optimality criterion in \cite{ramramdas2026optimalbettingwealth} is not the same as the criterion we use here, namely a $\liminf$ at all integer times. That is, \cite{ramramdas2026optimalbettingwealth} require non-defective stopping rules as a convention for the $e$-processes and the optimality criteria is phrased via a $\limsup$ along an associated block schedule. In addition to the fact that our $\REGROW$ criterion uses a $\liminf$ along every integer time, we also require validity at $\tau=\infty$ as well. We give a justification for this as follows. Denote the $n$-step $e$-power as $a_n(Q;\Pcal)=\sup_{Z\ge 0} \E_{Q^n}[\log Z]$ where $Z$ is an $e$-variable. Thanks to \cite{ramramdas2026optimalbettingwealth}, we know that products of optimal $e$-variables on independent blocks show $a_n$ is superadditive, and thus
\[
d(Q,\Pcal) :=\lim_{n\to\infty} \frac{a_n(Q)}{n}=\sup_{n\ge 1}\frac{a_n(Q,\Pcal)}{n}\in[0,\infty].
\]
Now, define
\[
\mathcal E_\infty(\Pcal) := \Bigl\{ E:\, \sup_{P\in\Pcal}\E_{P^\infty}[E_\tau]\le1 \text{ for every } \tau\in\mathbb N_0\cup\{\infty\} \Bigr\}.
\]
By convention, we will take $E_{\infty}=\limsup_{n\to\infty}E_n$.

\begin{proposition}\label{prop:integer-benchmark}
For every $Q\in\mathcal M_1(\X)$, we have that $\sup_{E\in \mathcal E_{\infty}(\Pcal)}\liminf_{n\to\infty} \frac1n \E_{Q^{\infty}}[\log E_n]=\dinfKL(Q;\Pcal)$. Specifically, every $E\in\mathcal E_{\infty}(\Pcal)$ satisfies
\[
\limsup_{n\to\infty} \frac1n \E_{Q^{\infty}}[\log E_n]\le \dinfKL(Q;\Pcal).
\]
And, there exists a non-decreasing $W^{(Q)}\in\mathcal E_{\infty}(\Pcal)$ such that
\[
\liminf_{n\to\infty}\frac1n \E_{Q^{\infty}}[\log W_n^{(Q)}]\ge \dinfKL(Q;\Pcal).
\]
\end{proposition}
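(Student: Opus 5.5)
The plan has two halves: an upper bound valid for every $E\in\mathcal E_\infty(\Pcal)$, and a lower bound achieved by an explicit nondecreasing process. Both go through the $n$-step $e$-power $a_n(Q;\Pcal)=\sup_Z \E_{Q^n}[\log Z]$. By the cited results of \cite{ramramdas2026optimalbettingwealth}, $a_n$ is superadditive, $a_n/n\to d(Q,\Pcal)=\sup_n a_n/n$, and this limit equals $\dinfKL(Q;\Pcal)$. The last identification comes from duality between $n$-sample $e$-variables and the bipolar $(\Pcal^n)^{\circ\circ}$: the optimal $e$-power against $\Pcal^n$ equals $\inf_{R\in(\Pcal^n)^{\circ\circ}}\KL(Q^n\|R)$. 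I will take this duality as given from that work, so the benchmark can be written as $\sup_n a_n/n$.

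\emph{Upper bound.} Fix $E\in\mathcal E_\infty(\Pcal)$ and $n$. Taking the deterministic stopping time $\tau\equiv n$ in the definition of $\mathcal E_\infty(\Pcal)$ gives $\sup_{P}\E_{P^\infty}[E_n]\le1$. Since $E_n$ is $\mathcal F_n$-measurable, it is an $n$-sample $e$-variable for $\Pcal^n$. Hence $\E_{Q^\infty}[\log E_n]\le a_n(Q;\Pcal)$, where $\E\log$ is interpreted in $[-\infty,\infty)$; negative parts cause no issue for an upper bound. Dividing by $n$ and taking $\limsup$ gives $\limsup_n \frac1n\E_Q\log E_n\le \lim_n a_n/n=\dinfKL(Q;\Pcal)$. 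This also yields the ``$\le$'' direction of the displayed equality of the supremum.

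\emph{Lower bound.} If $d:=\dinfKL(Q;\Pcal)=0$, take $W\equiv1$. Otherwise, for each finite $\rho<d$, pick a block length $m$ and an $m$-sample $e$-variable $Z$ with $\E_{Q^m}\log Z\ge m\rho$; this is possible since $d=\sup_m a_m/m$. If $d=\infty$, use an increasing sequence of values of $\rho$. Form the block-product process $M_n=\prod_{b\le \lfloor n/m\rfloor} Z_b$, where $Z_b$ is $Z$ evaluated on the $b$-th block, and keep $M_n$ constant within the current block. This is a nonnegative $P$-supermartingale for every $P\in\Pcal$ along block endpoints, and it is constant in between. By optional stopping and Fatou, including at $\tau=\infty$ with $E_\infty=\limsup_n E_n$, it satisfies $\E_P M_\tau\le1$ for all $\tau\in\mathbb N_0\cup\{\infty\}$. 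Its growth under $Q$ is $\frac1n\E_Q\log M_n=\lfloor n/m\rfloor m\rho/n\to\rho$.

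To handle all $\rho$ simultaneously and obtain a nondecreasing process, I will mix two ingredients. First, replace each $M^{(\rho)}$ by its running maximum. Second, average over a countable sequence $\rho_j\uparrow d$ with weights $w_j$ such that $\sum_j w_j\le1$, setting $W_n:=\sum_j w_j\max_{k\le n}M^{(j)}_k$.

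The main obstacle is that the running maximum of a nonnegative supermartingale is not an $e$-process, since $\E\sup$ can be infinite. I would first try to avoid the running maximum by making each component nondecreasing directly, using a ``sell-at-threshold'' or lower-envelope device instead. The alternative is to accept a component that is only nondecreasing in the sense of the mixture, and not literally nondecreasing. Failing both, I would fall back on the nondecreasing construction from \cite{ramramdas2026optimalbettingwealth}, which is quoted in the statement of the existence claim. In every version, the growth bound $\liminf_n \frac1n\E_Q\log W_n\ge \rho_j+\frac1n\log w_j\to\rho_j$ follows from $\log W_n\ge \log w_j+\log M^{(j)}_n$, and letting $j\to\infty$ gives $d$. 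This establishes the existence claim and, combined with the upper bound, the equality of the supremum.
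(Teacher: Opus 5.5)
\textbf{Upper bound.} Your upper bound is correct and is the paper's argument. Taking $\tau\equiv n$ shows $E_n$ is an $n$-sample $e$-variable, so $\E_{Q^\infty}[\log E_n]\le a_n(Q;\Pcal)$, and $a_n/n\to\dinfKL(Q;\Pcal)$.

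\textbf{Lower bound: a genuine gap.} The problem is not only the nondecreasing requirement you flag. Your block product $M_n=\prod_{b\le\lfloor n/m\rfloor}Z_b$, held constant inside blocks, is a supermartingale only for the block filtration $(\mathcal F_{bm})_{b\ge0}$. The stopping times in $\mathcal E_\infty(\Pcal)$ are $(\mathcal F_n)$-stopping times, and $\lfloor\tau/m\rfloor$ is not a stopping time for the block filtration. Whether block $b$'s factor is collected, i.e.\ the event $\{\tau\ge bm\}$, is decided in $\mathcal F_{bm-1}$, after the stopper has seen $m-1$ observations of that block. So optional stopping does not apply, and $\E_{P^\infty}M_\tau\le1$ fails in general. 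For example, take $m=2$, $P$ a fair coin on $\{0,1\}$, $Z=4\cdot\one\{X_1=X_2=1\}$ (so $\E_P Z=1$), and $\tau=1$ if $X_1=0$, $\tau=2$ otherwise. Then $\E_{P^\infty}M_\tau=\tfrac12\cdot1+\tfrac12\cdot2=\tfrac32$. For a composite null you cannot repair this by interpolating with conditional expectations inside blocks, since those depend on $P$. The mixture over $\rho_j$, with different block lengths, inherits the same defect. Your fallbacks do not supply a proof of the existence claim: an unspecified sell-at-threshold device, dropping monotonicity, or citing the construction from the reference.

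\textbf{The missing idea.} Make the process nondecreasing from the start. Then $W_\tau\le W_\infty$ for every $\tau\in\mathbb N_0\cup\{\infty\}$, and validity reduces to the single inequality $\E_{P^\infty}W_\infty\le1$. The paper does this additively rather than multiplicatively.
\begin{itemize}
\item For every $n$, take a near-optimal $n$-sample $e$-variable $Z_n(X_1,\dots,X_n)$ with $\E_{Q^n}\log Z_n\ge a_n(Q;\Pcal)-\varepsilon_n$.
\item Take weights with $c+\sum_n b_n\le1$ and $n^{-1}\log b_n\to0$, e.g.\ $c=\tfrac12$, $b_n=3/(\pi^2n^2)$.
\item Set $W_N=c+\sum_{n\le N}b_nZ_n(X_1,\dots,X_n)$.
\end{itemize}
Then $\E_{P^\infty}W_\tau\le c+\sum_n b_n\E_{P^n}Z_n\le1$. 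Also $W_N\ge b_NZ_N$ gives
$\frac1N\E_{Q^\infty}\log W_N\ge (a_N-\varepsilon_N+\log b_N)/N\to\dinfKL(Q;\Pcal)$.
This step uses that $a_N/N$ actually converges (by superadditivity), not merely that its supremum equals the benchmark. No block products, running maxima, or mixing over $\rho_j$ are needed.
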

\begin{proof}
Let us aptly choose strictly positive approximate (and finite) optimizers $Z_n$ such that $\E_{Q^n}[\log Z_n]\ge a_n(Q,\Pcal)-\varepsilon_n$. In addition, let us choose $c>0$  and $b_n>0$ such that $c+\sum_{n\ge 1}b_n\le 1$ and $\log(b_n)/n\to 0$. For example, this can be done by taking $c=0.5$ and $b_n=3/(\pi^2n^2)$. Now, let us set
\[
W_N^{(Q)} = c + \sum_{n=1}^N b_n Z_n(X_1,\dots, X_N).
\]
Then it follows that for every $P\in\Pcal$ and each stopping rule $\tau$ including $\infty$, we have
\begin{align*}
\E_{P^{\infty}} [W_\tau^{(Q)}]&=c+\sum_{n\ge 1} b_n\E_{P^{\infty}}[Z_n\mathbf{1}\{\tau\ge n\}]\\
&\le c+\sum_{n\ge 1}b_n \E_{P^{\otimes n}}[Z_n]\\
&\le c+\sum_{n=1}^{\infty} b_n = 1.
\end{align*}
Thus $W\in\mathcal E(\Pcal)$. Further since $W_N\ge b_N Z_N$, it follows that
\[
\frac{1}{N}\E_Q[\log W_n]\ge \frac{a_N(Q;\Pcal)}{N}-\frac{\varepsilon}{N}+\frac{\log b_N}{N}.
\]
Hence,
\[
\liminf_{N\to\infty}\frac{1}{N}\E_Q[\log W_N]\ge d(Q;\Pcal).
\]
Now, conversely, consider any $e$-process $E$. We know that its $N$th value will be an $N$-sample $e$-variable, hence $\E_Q[\log E_N]\le a_N(Q;\Pcal)$. As a consequence,
\[
\limsup_{N\to\infty} \frac{1}{N}\E_Q[\log E_n]\le d(Q;\Pcal).
\]
This shows the statement for all integer times. That is,
\[
\sup_{E\in\mathcal E(\Pcal)}\liminf_{N\to\infty}\frac1N \E_Q[\log E_N] = d(Q;\Pcal). 
\]
Therefore, this is the benchmark for our class of stopping rules, aligned with the previous work of \cite{ramramdas2026optimalbettingwealth}.
\end{proof}
% The power-one results above only require the pointwise separation quantity
% \[
%   \Phi(Q)=\inf_{P\in\mathcal P}\KL(Q\|P).
% \]
% For asymptotic growth-rate optimality of an e-process, however, the correct benchmark is slightly more subtle.
% Recent work of Ram and Ramdas shows that the optimal asymptotic betting-wealth growth rate is
% given by a bipolar infKL quantity, which can be strictly smaller than the raw KLinf value
% \(\Phi(Q)\) for nonclosed or otherwise irregular null classes. Under weak lower semicontinuity of
% \(\Phi\) at \(Q\), the two quantities coincide. This section uses the bipolar benchmark to define
% REGROW, and then shows that the weak-lsc construction attains it.

\subsection{The bipolar infKL benchmark}

Let \(S\) be a nonempty set of probability measures on a measurable space \((\Omega,\mathcal F)\).
Define its polar by
\[
  S^\circ
  :=
  \left\{
    E:\Omega\to[0,\infty]:
    \sup_{P\in S}\mathbb E_P E\le 1
  \right\}.
\]
The bipolar is
\[
  S^{\circ\circ}
  :=
  \left\{
    R\in\mathcal M_1(\Omega):
    \mathbb E_R E\le 1
    \text{ for every }E\in S^\circ
  \right\}.
\]
Plainly \(S\subseteq S^{\circ\circ}\). In this paper, the bipolar is used only to state the
general asymptotic growth-rate benchmark for e-processes.

For \(n\ge1\), write
\[
  \mathcal P^{(n)}
  :=
  \{P^n:P\in\mathcal P\}
  \subseteq\mathcal M_1(\mathcal X^n).
\]
For \(Q\in\mathcal M_1(\mathcal X)\), define
\[
  a_n(Q;\mathcal P)
  :=
  \inf_{R\in(\mathcal P^{(n)})^{\circ\circ}}
  \KL(Q^n\|R).
\]
The bipolar infKL rate is
\[
  d_{\mathrm{infKL}}(Q;\mathcal P)
  :=
  \lim_{n\to\infty}\frac1n a_n(Q;\mathcal P),
\]
whenever the limit exists. This is the optimal
asymptotic expected log-growth rate for e-processes testing \(\mathcal P\) against the fixed law
\(Q\).

In general,
\[
  d_{\mathrm{infKL}}(Q;\mathcal P)
  \le
  \Phi(Q).
\]
The inequality may be strict. However, if \(Q\mapsto\Phi(Q)\) is weakly lower semicontinuous at
\(Q\), then
\[
  d_{\mathrm{infKL}}(Q;\mathcal P)=\Phi(Q).
\]
Thus, on classes of alternatives where \(\Phi\) is locally weakly lower semicontinuous, the simpler
raw KLinf quantity remains the correct asymptotic REGROW target.

\subsection{Relative growth rate optimality}

Worst-case growth-rate optimality over the full complement $\Pcal^c$ is usually impossible, because alternatives may approach the null and the worst-case separation can be zero.  The appropriate goal is pointwise relative growth-rate optimality.  The pointwise target, however, must be the bipolar inf-KL rate $\dinfKL(Q;\Pcal)$ from Section~\ref{sec:warm}, not the raw KL-infimum in general.  Ram and Ramdas~\parencite{ramramdas2026optimalbettingwealth} prove that $\dinfKL(Q;\Pcal)$ is the optimal asymptotic betting-wealth growth rate for a fixed $Q$, and that $\dinfKL(Q;\Pcal)=\Phi_\Pcal(Q)$ whenever $\Phi_\Pcal(Q)<\infty$ is weakly lower semicontinuous at $Q$.

Let $\mathcal{E}(\Pcal)$ denote the class of all nonnegative adapted processes $E=(E_n)_{n\ge0}$ such that, for every stopping time $\tau$ taking values in $\mathbb N\cup\{\infty\}$,
\[
  \sup_{P\in\Pcal}\E_{P^\infty}[E_\tau]\le1,
\]
where $E_\infty:=\limsup_{n\to\infty}E_n$.  We use the convention $\log0=-\infty$.

For $Q\in\mathcal M_1(\X)$, define the corrected pointwise asymptotic GRO value by
\[
  \GRO_\infty(Q):=\dinfKL(Q;\Pcal).
\]
For $\Qcal\subseteq\mathcal M_1(\X)$, define the corrected asymptotic REGROW value by
\[
  \REGROW_\infty(\Qcal):=
  \sup_{E\in\mathcal{E}(\Pcal)}
  \inf_{Q\in\Qcal}
  \liminf_{n\to\infty}
  \left(
       \frac1n\E_{Q^\infty}[\log E_n]-\dinfKL(Q;\Pcal)
  \right).
\]
An $e$-process is pointwise pathwise REGROW on $\Qcal$ if
\[
  \liminf_{n\to\infty}\frac1n\log E_n
  \ge \dinfKL(Q;\Pcal)
  \qquad Q^\infty\text{-a.s. for every }Q\in\Qcal.
\]

Now, the actual $\REGROW$ value will only be well-defined on classes where the $\GRO$ benchmark value is finite. We now present our $\REGROW$ theorem as follows.

\begin{theorem}\label{regrow-localweaklsc}
Let $\X$ be Polish. Let $\varnothing\ne \Pcal\subseteq\mathcal M_1(\X)$ and denote $\Phi_{\Pcal}(R)=\inf_{P\in\Pcal}\KL(R\mid P)$. Let $\varnothing\ne \Qcal\subseteq \mathcal M_1(\X)$ and suppose that $\Phi_{\Pcal}$ is weakly lower semicontinuous at every $Q\in\Qcal$. Then, there exists a finite-valued, nondecreasing process $E^{\star}\in\mathcal E_{\infty}$ with $E_n^{\star}\ge 3/4$ for every $n$ such that for all $Q\in\Qcal$, almost surely,
\[
\liminf_{n\to\infty}\frac1n\log E_n^\star\ge \Phi_{\Pcal}(Q)
\]
And,
\[
\lim_{n\to\infty}\frac1n \E_{Q^{\infty}}[\log E_n^{\star}]=\Phi_{\Pcal}(Q)=\dinfKL(Q;\Pcal).
\]
Consequently, if $\Qcal_{\mathrm{fin}}=\{Q\in\Qcal:\Phi_{\Pcal}(Q)<\infty\}$ is nonempty, then it follows that $\REGROW_{\infty}(\Qcal_{\mathrm{fin}}; \Pcal)=0$ and $E^{\star}$ attains this value. Now if in addition we have that $\Phi_{\Pcal}>0$ for every $Q\in\Qcal$, then it follows that $E_n^{\star}\rightarrow \infty$ almost surely under $Q^{\infty}$. 
\end{theorem}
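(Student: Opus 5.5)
Under weak lower semicontinuity I would build $E^\star$ as a countable mixture of ``empirical-measure ball'' bets, one per pair (ball in a Lindel\"of subcover, rational rate), so that whichever ball the empirical measure eventually stays in pays off at nearly the right exponential rate.

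\textbf{The building blocks.} For each $Q\in\Qcal$ and each rational $r$ with $0<r<\Phi_\Pcal(Q)$, weak lower semicontinuity at $Q$ gives, as in the proof of Theorem~\ref{thm:mainy}, a radius $\delta_{Q,r}>0$ with $\overline B^{\mathrm{BL}}_{\delta_{Q,r}}(Q)\subseteq\{\Phi_\Pcal>r\}$. For fixed $r$, the open balls $U_{Q,r}:=B^{\mathrm{BL}}_{\delta_{Q,r}}(Q)$ cover $\{Q\in\Qcal:\Phi_\Pcal(Q)>r\}$. By Lemma~\ref{lem:lindelof} there is a countable subcover, and taking the union over rational $r$ yields a countable family $(U_j,r_j)_{j\ge1}$. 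For each $j$ define the $\mathcal F_n$-measurable variable $Z_{j,n}:=e^{nr_j}\one\{\widehat Q_n\in U_j\}\cdot c_n$, with a polynomial damping $c_n$ chosen below. Lemma~\ref{lem:convex-empirical} applied to the closed convex ball $\overline U_j$ gives $\E_{P^n}[e^{nr_j}\one\{\widehat Q_n\in U_j\}]\le 1$ for every $P\in\Pcal$.

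\textbf{The e-process.} Set
\[
E^\star_N:=\tfrac34+\sum_{j\ge1}\sum_{n\le N} w_j b_n e^{nr_j}\one\{\widehat Q_n\in U_j\},
\]
with positive weights satisfying $\sum_j w_j\le 1$, $\sum_n b_n\le 1/4$, $b_n$ polynomial in $n$, and $w_j$ summable. This process is nondecreasing and satisfies $E^\star_N\ge 3/4$. To make it finite-valued, the sum over $j$ at time $n$ must be finite; I would restrict it to $j\le n$, which loses nothing asymptotically because each fixed $j$ eventually enters. Validity at every $\tau\in\mathbb N\cup\{\infty\}$ follows exactly as in the proof of Proposition~\ref{prop:integer-benchmark}: by Tonelli,
\[
\E_{P^\infty}E^\star_\tau\le \tfrac34+\sum_{j,n}w_jb_n\,\E_{P^n}\bigl[e^{nr_j}\one\{\widehat Q_n\in U_j\}\bigr]\le 1.
\]
At $\tau=\infty$ the process is monotone, so $\limsup$ equals the limit and monotone convergence applies.

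\textbf{Growth.} Fix $Q\in\Qcal$ and a rational $r<\Phi_\Pcal(Q)$ (any $r$ if $\Phi_\Pcal(Q)=\infty$). Some $j$ has $r_j=r$ and $Q\in U_j$. Since $\widehat Q_n\Rightarrow Q$ a.s., eventually $\widehat Q_n\in U_j$, so $E^\star_n\ge w_jb_ne^{nr}$ and $\liminf\frac1n\log E^\star_n\ge r$. Letting $r\uparrow\Phi_\Pcal(Q)$ along the rationals gives the pathwise claim, and divergence follows when $\Phi_\Pcal(Q)>0$.

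\textbf{Expected growth.} For the lower bound, $\log E^\star_n\ge\log(3/4)$, so $\frac1n\log E^\star_n$ is bounded below by $\log(3/4)/n\to0$. Fatou's lemma then transfers the a.s.\ liminf to $\liminf\frac1n\E\log E^\star_n\ge\Phi_\Pcal(Q)$. For the upper bound, $E^\star_n$ is an $n$-sample $e$-variable, so $\E_Q\log E^\star_n\le a_n(Q;\Pcal)$. Proposition~\ref{prop:integer-benchmark} and the cited identity $\dinfKL=\Phi_\Pcal$ under weak lsc with $\Phi_\Pcal(Q)<\infty$ then give $\limsup\le\Phi_\Pcal(Q)$. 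In the infinite case the limit is $+\infty$ from the lower bound alone. Subtracting $\dinfKL$ shows that each $Q\in\Qcal_{\mathrm{fin}}$ has zero regret, and since regret is always $\le0$, $\REGROW_\infty(\Qcal_{\mathrm{fin}})=0$ is attained by $E^\star$.

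\textbf{Main obstacle.} The delicate step is the upper bound $\E_Q\log E^\star_n\le a_n$. This needs $\E_{P^n}E^\star_n\le1$, which requires the weights to be fixed in advance and not to depend on $n$ in a way that breaks the bound; it also relies on importing the identity $\dinfKL=\Phi_\Pcal$ from the cited work. The second concern is ensuring $\frac1n\log(w_jb_n)\to0$ for each fixed $j$, which holds since $w_j$ is constant in $n$ and $b_n$ is polynomial.
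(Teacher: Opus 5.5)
Your proposal is correct and follows essentially the paper's construction: rational rates, Lindel\"of subcovers by bounded-Lipschitz balls inside $\{\Phi_\Pcal>r\}$, and a weighted mixture of $e^{rt}\one\{\widehat Q_t\in\cdot\}$ bets on top of a base of $3/4$, with Tonelli plus Csisz\'ar for validity, a.s.\ weak convergence for the pathwise rate, and Fatou for the expected rate. The differences are minor. You force finiteness by truncating to $j\le n$, whereas the paper keeps all indices and damps them with weights proportional to $e^{-r_k^2}$. More usefully, the paper handles the upper bound you flag as the main obstacle without any imported identity: it uses $\E_{Q^n}\log E_n\le\KL(Q^n\|P^n)+\log\E_{P^n}E_n\le n\KL(Q\|P)$ and then takes the infimum over $P\in\Pcal$. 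The cited result is then needed only for the final equality with $\dinfKL(Q;\Pcal)$.
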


\begin{proof}
Fix a bounded-Lipschitz metric \(d_{\rm BL}\) that metrizes weak convergence on
\(\mathcal M_1(\mathcal X)\). Let \((r_k)_{k\ge1}\) be an enumeration of
\(\mathbb Q_{>0}\). For each \(k\), define
\[
  \mathcal Q_k:=\{Q\in\mathcal Q:\Phi(Q)>r_k\}.
\]
For every \(Q\in\mathcal Q_k\), weak lower semicontinuity of \(\Phi\) at \(Q\) implies that there
exists a weakly open neighborhood \(V_{k,Q}\ni Q\) such that
\[
  \Phi(R)>r_k
  \qquad\text{for all }R\in V_{k,Q}.
\]
Choose \(\delta_{k,Q}>0\) sufficiently small that the closed \(d_{\rm BL}\)-ball
\[
  C_{k,Q}:=\{R\in\mathcal M_1(\mathcal X):d_{\rm BL}(R,Q)\le \delta_{k,Q}\}
\]
is contained in \(V_{k,Q}\), and write
\[
  U_{k,Q}:=\{R\in\mathcal M_1(\mathcal X):d_{\rm BL}(R,Q)<\delta_{k,Q}\}.
\]
Then \(U_{k,Q}\) is a weakly open neighborhood of \(Q\), \(C_{k,Q}\) is weakly closed and convex,
and
\[
  \inf_{R\in C_{k,Q}}\Phi(R)\ge r_k.
\]
The family \(\{U_{k,Q}:Q\in\mathcal Q_k\}\) is an open cover of \(\mathcal Q_k\). Since
\(\mathcal X\) is Polish, \(\mathcal M_1(\mathcal X)\) with the weak topology is separable and
metrizable, hence Lindelöf. Therefore this cover has a countable subcover. We denote it by
\[
  (U_{k,j})_{j\in J_k},
\]
where \(J_k\subseteq\mathbb N\) is finite or countably infinite, and let \(C_{k,j}\) be the corresponding
closed \(d_{\rm BL}\)-ball. Thus, for all \(k\) and \(j\in J_k\),
\[
  C_{k,j}\text{ is weakly closed and convex,}
  \qquad
  \inf_{R\in C_{k,j}}\Phi(R)\ge r_k.
\]

Let
\(
  v_t:=\frac{6}{\pi^2t^2},
   t\ge1,
\)
so that \(\sum_{t\ge1}v_t=1\). Define
\[
  S:=\sum_{k\ge1}2^{-k}e^{-r_k^2}\in(0,\infty),
\]
and, for \(j\in J_k\),
\[
  w_{k,j}:=\frac{2^{-k-j}e^{-r_k^2}}{4S}.
\]
Then
\[
  \sum_{k\ge1}\sum_{j\in J_k} w_{k,j}\le \frac14.
\]
For \(n\ge0\), define
\[
  E^\star_n
  :=
  \frac34
  +
  \sum_{k\ge1}\sum_{j\in J_k}
  w_{k,j}\sum_{t=1}^n
  v_t e^{r_kt}\mathbf 1\{\widehat Q_t\in C_{k,j}\},
\]
where \(\widehat Q_t\) is the empirical measure of \(X_1,\ldots,X_t\).

First we check that \(E^\star_n<\infty\) for every deterministic \(n\). Indeed,
\[
\begin{aligned}
  \sum_{k\ge1}\sum_{j\in J_k}
  w_{k,j}\sum_{t=1}^n v_t e^{r_kt}
  &\le
  \sum_{k\ge1}\sum_{j\in J_k}
  w_{k,j} e^{r_kn}  \\
  &\le
  \frac1{4S}
  \sum_{k\ge1}2^{-k}e^{-r_k^2+r_kn}
  \sum_{j\ge1}2^{-j}.
\end{aligned}
\]
Since
\(
  -r_k^2+r_kn\le \frac{n^2}{4},
\)
the final display is at most \(e^{n^2/4}/(4S)<\infty\).

We next verify the e-process property. Fix \(P\in\mathcal P\) and an arbitrary stopping time
\(\tau\), with the convention \(E^\star_\infty:=\lim_{n\to\infty}E^\star_n\), which exists because
\(E^\star_n\) is nondecreasing in \(n\). By Tonelli's theorem,
\[
\begin{aligned}
  \mathbb E_{P^\infty}E^\star_\tau
  &=
  \frac34
  +
  \sum_{k\ge1}\sum_{j\in J_k}
  w_{k,j}
  \sum_{t\ge1}
  v_t e^{r_kt}
  P^\infty(\tau\ge t,\widehat Q_t\in C_{k,j})  \\
  &\le
  \frac34
  +
  \sum_{k\ge1}\sum_{j\in J_k}
  w_{k,j}
  \sum_{t\ge1}
  v_t e^{r_kt}
  P^t(\widehat Q_t\in C_{k,j}).
\end{aligned}
\]
Since \(C_{k,j}\) is weakly closed and convex, Csiszár's nonasymptotic Sanov bound gives
\[
  P^t(\widehat Q_t\in C_{k,j})
  \le
  \exp\left\{
    -t\inf_{R\in C_{k,j}}\KL(R\|P)
  \right\}.
\]
For every \(R\in C_{k,j}\),
\[
  \KL(R\|P)\ge \inf_{P'\in\mathcal P}\KL(R\|P')=\Phi(R),
\]
and hence
\[
  \inf_{R\in C_{k,j}}\KL(R\|P)
  \ge
  \inf_{R\in C_{k,j}}\Phi(R)
  \ge r_k.
\]
Therefore
\[
  P^t(\widehat Q_t\in C_{k,j})\le e^{-r_kt}.
\]
Substituting this bound yields
\[
\begin{aligned}
  \mathbb E_{P^\infty}E^\star_\tau
  &\le
  \frac34
  +
  \sum_{k\ge1}\sum_{j\in J_k}
  w_{k,j}
  \sum_{t\ge1}v_t e^{r_kt}e^{-r_kt}  \\
  &=
  \frac34
  +
  \sum_{k\ge1}\sum_{j\in J_k}w_{k,j}
  \le 1.
\end{aligned}
\]
Since \(P\in\mathcal P\) and \(\tau\) were arbitrary, \(E^\star\) is an e-process for
\(\mathcal P\).

We now prove the pathwise lower bound. Fix \(Q\in\mathcal Q\), and let \(r_k<\Phi(Q)\). Then
\(Q\in\mathcal Q_k\), so there is some \(j\in J_k\) such that \(Q\in U_{k,j}\). By the strong law
for empirical measures on Polish spaces, \(\widehat Q_n\Rightarrow Q\) \(Q^\infty\)-almost surely.
Since \(U_{k,j}\) is a weak neighborhood of \(Q\), it follows that, on a \(Q^\infty\)-almost sure event,
\[
  \widehat Q_n\in U_{k,j}\subseteq C_{k,j}
  \qquad\text{eventually}.
\]
Hence, eventually,
\[
  E^\star_n
  \ge
  w_{k,j}v_n e^{r_kn}.
\]
Therefore
\[
  \liminf_{n\to\infty}\frac1n\log E^\star_n
  \ge
  r_k
  +
  \liminf_{n\to\infty}\frac1n\log w_{k,j}
  +
  \liminf_{n\to\infty}\frac1n\log v_n
  =
  r_k,
\]
because \(w_{k,j}>0\) is fixed and \(n^{-1}\log v_n\to0\). Since this holds for every rational
\(r_k<\Phi(Q)\), we obtain
\[
  \liminf_{n\to\infty}\frac1n\log E^\star_n
  \ge
  \Phi(Q)
  \qquad Q^\infty\text{-a.s.}
\]
If \(\Phi(Q)=+\infty\), the same argument applied to every \(r_k\in\mathbb Q_{>0}\) gives an infinite
lower bound.

It remains to prove the expected-log optimality statement for \(Q\in\mathcal Q_{\rm fin}\). Let
\(E=(E_n)_{n\ge0}\) be any e-process for \(\mathcal P\). For every deterministic \(n\), \(E_n\) is an
\(n\)-sample e-variable, so
\[
  \mathbb E_{P^n}E_n\le1
  \qquad\text{for every }P\in\mathcal P.
\]
By the variational inequality for relative entropy, for every \(P\in\mathcal P\),
\[
  \mathbb E_{Q^n}\log E_n
  \le
  \KL(Q^n\|P^n)+\log\mathbb E_{P^n}E_n
  \le
  n\KL(Q\|P).
\]
Taking the infimum over \(P\in\mathcal P\) gives
\[
  \frac1n\mathbb E_{Q^\infty}\log E_n
  \le
  \Phi(Q).
\]
Thus no e-process can have asymptotic expected log-growth exceeding \(\Phi(Q)\) at such a \(Q\).

Applying the pathwise lower bound to \(E^\star\), and using \(E^\star_n\ge3/4\), Fatou's lemma gives
\[
\begin{aligned}
  \Phi(Q)
  &\le
  \mathbb E_{Q^\infty}
  \left[
    \liminf_{n\to\infty}
    \frac1n\log E^\star_n
  \right]  \\
  &\le
  \liminf_{n\to\infty}
  \frac1n\mathbb E_{Q^\infty}\log E^\star_n
  \le
  \limsup_{n\to\infty}
  \frac1n\mathbb E_{Q^\infty}\log E^\star_n
  \le
  \Phi(Q).
\end{aligned}
\]
Therefore
\[
  \lim_{n\to\infty}
  \frac1n\mathbb E_{Q^\infty}\log E^\star_n
  =
  \Phi(Q),
\]
as claimed.
\end{proof}

\begin{remark}[Composite REGROW beyond weak compactness]
For a fixed $Q$, the bipolar inf-KL theory gives an asymptotically optimal betting strategy.  The construction above shows that, under weak compactness, the weak-lower-semicontinuity and Sanov-neighborhood argument supplies a single process that is pointwise optimal over the full complement $\Pcal^c$.
\end{remark}

\section{Nonparametric examples where \texorpdfstring{$\Phi$}{Phi} is weakly lower semicontinuous}\label{sec:examples}

We discuss three classes of examples here, with many more left to Appendix~\ref{app:more-examples}.

\subsection{Weakly compact \texorpdfstring{$\Pcal$}{P} and KL-local compactness}

\begin{lemma}\label{lem:phi-lsc}
If $\Pcal\subseteq\mathcal{M}_1(\X)$ is weakly compact, then $Q \mapsto \Phi(Q)$ is weakly lower semicontinuous.
\end{lemma}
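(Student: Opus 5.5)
The plan is to combine the joint lower semicontinuity of relative entropy (Lemma~\ref{lem:kl-lsc}) with a compactness argument that lets us pass to convergent subsequences of near-minimizers in $\Pcal$. Since $\mathcal M_1(\X)$ with the weak topology is metrizable, weak lower semicontinuity can be checked along sequences. Accordingly, fix $Q$ and a sequence $R_k\Rightarrow Q$; we must show $\Phi(Q)\le\liminf_k\Phi(R_k)$.

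First, I would pass to a subsequence along which $\Phi(R_k)\to L:=\liminf_k\Phi(R_k)$. If $L=+\infty$ there is nothing to prove, so assume $L<\infty$; then $\Phi(R_k)<\infty$ for all large $k$. For each such $k$, choose $P_k\in\Pcal$ with
\[
  \KL(R_k\|P_k)\le \Phi(R_k)+1/k .
\]
Next, because $\Pcal$ is weakly compact and the weak topology on $\mathcal M_1(\X)$ is metrizable (Polish $\X$), $\Pcal$ is sequentially compact. Extract a further subsequence with $P_{k}\Rightarrow P_\ast$ for some $P_\ast\in\Pcal$. Along this subsequence $(R_k,P_k)\Rightarrow(Q,P_\ast)$ in the product weak topology, so Lemma~\ref{lem:kl-lsc} gives
\[
  \Phi(Q)\le\KL(Q\|P_\ast)\le\liminf_k\KL(R_k\|P_k)\le\liminf_k\bigl(\Phi(R_k)+1/k\bigr)=L .
\]
This completes the argument. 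A remark the proof might add is that the infimum defining $\Phi(Q)$ is attained, by the same argument applied with $R_k\equiv Q$.

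The only real obstacle is the topological bookkeeping. We need that compactness of $\Pcal$ in the weak topology yields sequential compactness, and that lower semicontinuity in the product topology can be applied along sequences. Both follow from metrizability of the weak topology (for instance via $d_{\mathrm{BL}}$, as used earlier in the paper), since compact metric spaces are sequentially compact. The rest is a routine diagonal subsequence selection. The case where $\Phi(R_k)=\infty$ infinitely often is handled by working only along the subsequence realizing the liminf.
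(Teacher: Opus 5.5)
Your proposal is correct and follows essentially the same route as the paper: pass to a subsequence realizing the liminf, pick $1/k$-near-minimizers in $\Pcal$, extract a weakly convergent subsequence with limit in $\Pcal$ by compactness, and conclude with joint lower semicontinuity of $\KL$ (Lemma~\ref{lem:kl-lsc}). Your explicit handling of the case $L=+\infty$, and of sequential compactness via metrizability of the weak topology, spells out steps the paper's proof leaves implicit.
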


\begin{proof}
Let $R_k\Rightarrow R$ in $\mathcal{M}_1(\X)$ and set $L:=\liminf_{k\to\infty}\Phi(R_k)\in[0,\infty]$.  Choose a subsequence $(k_j)_{j\ge1}$ such that $\Phi(R_{k_j})\to L$.  For each $j$, choose an almost-minimizer $P_{k_j}\in\Pcal$ satisfying $\KL(R_{k_j}\|P_{k_j})\le \Phi(R_{k_j})+j^{-1}$.  By weak compactness of $\Pcal$, the sequence $(P_{k_j})$ has a weakly convergent subsequence, still denoted $(P_{k_j})$, with limit $P_\infty\in\Pcal$.  Since $R_{k_j}\Rightarrow R$, Lemma~\ref{lem:kl-lsc} gives,
\(
\KL(R\|P_\infty)\le \liminf_{j\to\infty}\KL(R_{k_j}\|P_{k_j}).
\)
Let us now use the almost minimality and the fact that  $\Phi(R)\le \KL(R\|P_\infty)$. Doing this gives us that
\[
\Phi(R)\le \liminf_{j\to\infty}\KL(R_{k_j}\|P_{k_j})\le \liminf_{j\to\infty}\left(\Phi(R_{k_j})+\frac{1}{j}\right)=\liminf_{k\to\infty}\Phi(R_k),
\]
which is exactly lower semicontinuity of $\Phi$.
\end{proof}

As a prototypical example, consider the bounded mean problem: let $\Omega =[0,1]$ and let $\Pcal$ be the set of distributions with mean at most $a$ for some $a \in (0,1)$. This is a nonparametric class of distributions that is easily checked to be weakly compact. We give a more nontrivial example below.

\paragraph{Wasserstein Balls.}
Let \((\X,d)\) be a Polish metric space.  For \(p\in[1,\infty)\), define the \(p\)-Wasserstein distance, with value \(+\infty\) allowed, by
\[
  \Wass_p(P,R)^p
  :=\inf_{\pi\in\Gamma(P,R)}\int_{\X\times\X}d(x,y)^p\,\dd\pi(x,y),
\]
where \(\Gamma(P,R)\) is the set of couplings of \(P\) and \(R\).  For \(p=1\), Kantorovich--Rubinstein duality identifies \(\Wass_1\) as the IPM generated by the class of 1-Lipschitz functions, modulo irrelevant additive constants.  
Define
\[
  \Pcal_{\Wass_p}(\rho)
  :=\{P\in\M(\X):\Wass_p(P,P_0)\le\rho\}.
\]
For \(p>1\), \(\Wass_p\) is not an IPM, but the same lower-semicontinuity conclusion below applies.

\begin{proposition}[Wasserstein balls on proper spaces]\label{prop:wasserstein-lsc}
Assume that \((\X,d)\) is proper, meaning that closed metric balls are compact.  Let \(P_0\in\M(\X)\) have finite \(p\)-th moment, and fix \(\rho<\infty\).  
Then \(\Pcal_{\Wass_p}(\rho)\) is weakly compact.  Consequently,
\(
  Q\mapsto
  \Phi_{\Pcal_{\Wass_p}(\rho)}(Q)
\)
is weakly lower semicontinuous on \(\M(\X)\).  Moreover \(\Phi_{\Pcal_{\Wass_p}(\rho)}(Q)>0\) for every \(Q\notin\Pcal_{\Wass_p}(\rho)\).  
\end{proposition}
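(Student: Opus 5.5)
The plan is to prove weak compactness of $\Pcal_{\Wass_p}(\rho)$ via Prokhorov's theorem, which reduces it to two facts: tightness and weak closedness. The two consequences then follow quickly. Lower semicontinuity comes directly from Lemma~\ref{lem:phi-lsc}. Positivity comes from combining compactness with Lemma~\ref{lem:kl-lsc}.

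\emph{Tightness.} Fix a base point $x_0\in\X$ and take $P$ in the ball. By the triangle inequality for $\Wass_p$ (allowed since $P_0$ has finite $p$-th moment),
\[
\Wass_p(P,\delta_{x_0})\le \rho+\Wass_p(P_0,\delta_{x_0})=:M<\infty .
\]
Hence $\int d(x,x_0)^p\,dP(x)\le M^p$ uniformly over the ball. Markov's inequality then gives
\[
P\bigl(d(x,x_0)>R\bigr)\le M^p/R^p .
\]
Properness makes the closed ball $\overline B(x_0,R)$ compact, so the family is uniformly tight. Prokhorov's theorem then gives relative weak compactness, and we need only closedness.

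\emph{Weak closedness.} This is the main obstacle, since $\Wass_p$ is not weakly continuous. What we need is weak lower semicontinuity of $P\mapsto \Wass_p(P,P_0)$. Take $P_k\Rightarrow P$ with $\Wass_p(P_k,P_0)\le\rho$, and choose optimal (or $1/k$-optimal) couplings $\pi_k\in\Gamma(P_k,P_0)$. The marginals form tight families, so $(\pi_k)$ is tight on $\X\times\X$. Extract a weakly convergent subsequence $\pi_k\Rightarrow\pi$. The limit $\pi$ has marginals $P$ and $P_0$, because marginal projections are continuous. The cost $d^p$ is continuous, nonnegative and lower semicontinuous, so the portmanteau theorem (applied via truncations $d^p\wedge m$ and monotone convergence) yields
\[
\int d^p\,d\pi\le\liminf_k\int d^p\,d\pi_k\le\rho^p .
\]
Thus $\Wass_p(P,P_0)\le\rho$, the ball is weakly closed, and it is therefore weakly compact. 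Lemma~\ref{lem:phi-lsc} then gives weak lower semicontinuity of $\Phi$.

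\emph{Positivity.} Let $Q\notin\Pcal_{\Wass_p}(\rho)$ and suppose $\Phi(Q)=0$. Pick $P_k$ in the ball with $\KL(Q\|P_k)\to0$. By compactness, a subsequence converges weakly, $P_k\Rightarrow P_\infty$, with $P_\infty$ in the ball. Lemma~\ref{lem:kl-lsc} gives $\KL(Q\|P_\infty)\le\liminf_k\KL(Q\|P_k)=0$, so $Q=P_\infty$ lies in the ball, a contradiction. (Alternatively, Pinsker shows $P_k\to Q$ in total variation, hence weakly, and closedness finishes the argument.) Therefore $\Phi(Q)>0$.
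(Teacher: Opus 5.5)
Your proposal is correct and follows essentially the same route as the paper. Tightness comes from the uniform $p$-th moment bound (via the triangle inequality, Markov and properness), weak closedness comes from tight near-optimal couplings and lower semicontinuity of $d^p$, and then Prokhorov and Lemma~\ref{lem:phi-lsc} finish. For positivity, the paper uses your parenthetical alternative (Pinsker, total-variation hence weak convergence, then weak closedness), while your main argument via compactness and Lemma~\ref{lem:kl-lsc} is an equally valid variant.
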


A weak compactness proof for Wasserstein balls under properness is also given by Yue, Kuhn, and Wiesemann \cite[Theorem 1]{yue2022linear}, but we provide a self-contained proof in the Appendix for completeness.

\paragraph{From weak compactness to KL-local compactness} The rest of this section contains examples where $\Pcal$ is not weakly compact. These are useful because the null may contain arbitrary nuisance distributions, escaping parameters, or mass that can move to infinity.  The common mechanism is not global compactness of the null, but compactness of the nearly KL-optimal denominators for a fixed weakly convergent alternative sequence.

\begin{definition}[KL-local compactness relative to weak convergence]\label{def:kl-local-compact}
A null class \(\Pcal\subseteq\M(\X)\) is called \emph{KL-locally compact} if the following holds: whenever \(Q_n\Rightarrow Q\) in \(\M(\X)\), \(P_n\in\Pcal\), and
\(
  \sup_n \KL(Q_n\|P_n)<\infty,
\)
there is a subsequence \(P_{n_k}\Rightarrow P\) with \(P\in\Pcal\).
\end{definition}

\begin{proposition}[A noncompact sufficient condition]\label{prop:kl-local-compact}
If \(\Pcal\) is KL-locally compact, then \(Q\mapsto\Phi_{\Pcal}(Q)\) is weakly lower semicontinuous on \(\M(\X)\).
\end{proposition}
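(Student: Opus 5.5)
The argument follows the proof of Lemma~\ref{lem:phi-lsc} almost verbatim, with KL-local compactness taking over the role that global weak compactness played there. Fix a sequence $R_k\Rightarrow R$ in $\M(\X)$ and set $L:=\liminf_{k\to\infty}\Phi_\Pcal(R_k)\in[0,\infty]$. If $L=\infty$ there is nothing to prove. Otherwise, pass to a subsequence $(k_j)$ along which $\Phi_\Pcal(R_{k_j})\to L$. Discarding finitely many terms, we may assume $\Phi_\Pcal(R_{k_j})\le L+1$ for all $j$.

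Next we pick near-minimizers. For each $j$, choose $P_j\in\Pcal$ with
\[
\KL(R_{k_j}\|P_j)\le \Phi_\Pcal(R_{k_j})+j^{-1}.
\]
This is possible because $\Phi_\Pcal(R_{k_j})<\infty$. These denominators satisfy
\[
\sup_j\KL(R_{k_j}\|P_j)\le L+2<\infty,
\]
and $R_{k_j}\Rightarrow R$. This is exactly the hypothesis of Definition~\ref{def:kl-local-compact}, applied to the sequences $(R_{k_j})$ and $(P_j)$. KL-local compactness therefore yields a further subsequence $P_{j_i}\Rightarrow P_\infty$ with $P_\infty\in\Pcal$. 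Along this subsequence we still have $R_{k_{j_i}}\Rightarrow R$, since subsequences of weakly convergent sequences converge to the same limit.

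To conclude, apply the joint weak lower semicontinuity of relative entropy (Lemma~\ref{lem:kl-lsc}) to the pairs $(R_{k_{j_i}},P_{j_i})\to(R,P_\infty)$:
\[
\Phi_\Pcal(R)\le\KL(R\|P_\infty)\le\liminf_{i\to\infty}\KL(R_{k_{j_i}}\|P_{j_i})\le\liminf_{i\to\infty}\bigl(\Phi_\Pcal(R_{k_{j_i}})+j_i^{-1}\bigr)=L.
\]
The first inequality uses $P_\infty\in\Pcal$. Since the sequence $R_k\Rightarrow R$ was arbitrary, this is weak lower semicontinuity of $\Phi_\Pcal$ at every point of $\M(\X)$.

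The only delicate step is checking that the KL-local compactness hypothesis really applies. It is stated for a sequence $Q_n\Rightarrow Q$ indexed by all $n$, while we use it along the subsequence $(R_{k_j})$. This is harmless, because that subsequence is itself a weakly convergent sequence in its own right. The uniform KL bound is also essential: it is the reason for handling $L=\infty$ separately and for truncating to the terms with $\Phi_\Pcal(R_{k_j})\le L+1$. Everything else is routine.
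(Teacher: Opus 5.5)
Your proof is correct and takes essentially the same route as the paper's. Both pass to a subsequence attaining the liminf, choose near-minimizers with uniformly bounded KL, use KL-local compactness to extract denominators converging weakly to some $P_\infty\in\Pcal$, and conclude by joint lower semicontinuity of relative entropy (Lemma~\ref{lem:kl-lsc}). Your truncation to terms with $\Phi_\Pcal(R_{k_j})\le L+1$ just makes explicit the bounded-KL step that the paper leaves implicit.
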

\begin{proof}
Let \(Q_n\Rightarrow Q\), and pass to a subsequence along which \(\Phi_{\Pcal}(Q_n)\) converges to
\[
  L:=\liminf_n\Phi_{\Pcal}(Q_n).
\]
If \(L=+\infty\), there is nothing to show.  Otherwise choose \(P_n\in\Pcal\) such that
\[
  \KL(Q_n\|P_n)\le \Phi_{\Pcal}(Q_n)+1/n.
\]
The sequence \((\KL(Q_n\|P_n))_n\) is bounded, so KL-local compactness gives a subsequence, still denoted \((P_n)\), with \(P_n\Rightarrow P\in\Pcal\).  Joint weak lower semicontinuity of relative entropy gives
\(
  \Phi_{\Pcal}(Q)
  \le \KL(Q\|P)
  \le \liminf_n \KL(Q_n\|P_n)
  = L.
\)
This is exactly weak lower semicontinuity.
\end{proof}

If \(\Pcal\) is weakly compact and weakly closed, then it is KL-locally compact.  The converse fails.  KL-local compactness only controls denominator sequences that remain at finite KL distance from a tight, weakly convergent alternative sequence.  The null may still contain many other sequences that escape to infinity.

\subsection{\texorpdfstring{\(f\)}{f}-divergence balls}

Let \(f:[0,\infty)\to[0,\infty]\) be a proper, convex, lower semicontinuous function with \(f(1)=0\).  Define
\(
f'_\infty:=\lim_{t\to\infty}\frac{f(t)}{t}\in[0,\infty].
\)
For probability measures \(P\) and \(P_0\), let \(P=hP_0+P^\perp\) be the Lebesgue decomposition with \(P^\perp\perp P_0\).  The Csiszar divergence is
\begin{equation}\label{eq:fdiv}
  D_f(P\|P_0):=\int f(h)d P_0+f'_\infty P^\perp(\X).
\end{equation}
For \(r\ge0\), write
\begin{equation}\label{eq:fball}
  \Pcal:=\{P\in\mathcal M(\X):D_f(P\|P_0)\le r\}.
\end{equation}
Well known examples include the total variation, squared Hellinger, forward and reverse KL, $\chi^2$, and power divergence balls. Reverse-orientation balls are included because \(D_f(P_0\|P)=D_{\widetilde f}(P\|P_0)\), where \(\widetilde f(t):=t f(1/t)\), with the usual endpoint conventions.  Finite intersections of constraints of the form \(D_{f_j}(P\|P_j)\le r_j\) are also covered by the same proof below.

\begin{lemma}[Weak lower semicontinuity of \(f\)-divergence]\label{lem:f-lsc}
For every closed convex generator \(f\) as above, the map \(P\mapsto D_f(P\|P_0)\) is weakly lower semicontinuous.
\end{lemma}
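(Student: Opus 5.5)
The plan is to obtain a variational (dual) representation of $D_f(\cdot\|P_0)$ as a supremum of weakly continuous functionals of $P$, exactly as in the proof of Lemma~\ref{lem:kl-lsc}. Let $f^*(s):=\sup_{t\ge0}\{st-f(t)\}$ be the convex conjugate; since $f$ is proper, convex and lower semicontinuous on $[0,\infty)$, biconjugation gives $f=f^{**}$. I would prove
\[
  D_f(P\|P_0)=\sup_{g\in C_b(\X),\ g<f'_\infty}\Bigl\{\int g\,dP-\int f^*(g)\,dP_0\Bigr\},
\]
where the constraint $g<f'_\infty$ is read as ``$g$ takes values in a compact subset of $\{f^*<\infty\}$'', so that $f^*\circ g$ is bounded and continuous. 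For each such $g$, the map $P\mapsto\int g\,dP-\int f^*(g)\,dP_0$ is weakly continuous, because the second term does not depend on $P$ and $g\in C_b(\X)$. A supremum of continuous functions is lower semicontinuous, and that gives the lemma.

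For the inequality ``$\ge$'', I would apply Fenchel--Young pointwise. Take the Lebesgue decomposition $P=hP_0+P^\perp$ and split $\int g\,dP=\int gh\,dP_0+\int g\,dP^\perp$. On the absolutely continuous part, $gh-f^*(g)\le f(h)$. On the singular part, $g\le f'_\infty$ gives $\int g\,dP^\perp\le f'_\infty P^\perp(\X)$. Together these yield the bound by \eqref{eq:fdiv}.

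For the inequality ``$\le$'', I would first establish the dual formula over bounded measurable $g$. Choose a set $S$ with $P_0(S)=0$ and $P^\perp(S^c)=0$. On $S^c$ take $g$ to be a truncation of a subgradient selection $g\in\partial f(h)$, and on $S$ take $g$ equal to a large constant approaching $f'_\infty$, using $\sup\{s: f^*(s)<\infty\}=f'_\infty$. Monotone convergence in the truncation level then recovers $\int f(h)\,dP_0+f'_\infty P^\perp(\X)$, including the case where this value is $+\infty$. Next I pass from bounded measurable $g$ to $g\in C_b(\X)$. By Lusin's theorem on the Polish space, applied with respect to the finite measure $P_0+P$, there are continuous $g_m$ with the same range bounds such that $g_m\to g$ in $(P_0+P)$-measure. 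Since $f^*$ is continuous on the interior of its domain and $g$ stays in a compact subinterval of that interior, dominated convergence shows the objective converges.

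I expect the main obstacle to be the endpoint bookkeeping. The domain of $f^*$ may be closed or open at $f'_\infty$, and $f'_\infty$ may be infinite, as for KL. Moreover, $f(0)=+\infty$ is allowed, which happens for generators of reverse-orientation divergences. To handle this I would restrict to $g$ with values in $[a,b]\subset\operatorname{int}\operatorname{dom}f^*$ and approximate. The case $f(0)=\infty$ corresponds to $f^*(s)\to\infty$ slowly as $s\to-\infty$; it is handled by taking $g$ very negative where $h$ is near $0$, and the truncation limit again follows by monotone convergence. Finally, for finite intersections of constraints, each map $P\mapsto D_{f_j}(P\|P_j)$ is lower semicontinuous, so each sublevel set is weakly closed, and so is their intersection.
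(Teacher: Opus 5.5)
Your proposal is correct and takes the paper's route. The paper cites the dual representation $D_f(P\|P_0)=\sup_{g}\{\int g\,dP-\int f^*(g)\,dP_0\}$ over $g\in C_b(\X)$ as standard and concludes because each map $P\mapsto\int g\,dP-\int f^*(g)\,dP_0$ is weakly continuous; your Fenchel--Young bound, truncation and monotone convergence, and Lusin step prove that formula, and your restriction of $g$ to values in a compact subset of $(-\infty,f'_\infty)$ is a needed correction to the paper's version, since for $f'_\infty<\infty$ an unrestricted $g\in C_b(\X)$ can exceed $f'_\infty$ on a $P_0$-null set charged by $P$ (e.g.\ $P=\delta_1$, $P_0=\delta_0$ with the total-variation generator), which makes the unrestricted supremum $+\infty$.
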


\begin{lemma}[Strict positivity off an \(f\)-ball]\label{lem:fball-positive}
If \(Q\notin\Pcal\), then \(\Phi_{\Pcal}(Q)>0\).
\end{lemma}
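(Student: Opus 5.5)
We argue by contradiction, via the contrapositive: if $\Phi_\Pcal(Q)=0$ then $Q\in\Pcal$. So suppose $Q\notin\Pcal$, i.e.\ $D_f(Q\|P_0)>r$. Suppose also that $\Phi_\Pcal(Q)=0$. Then there are $P_n\in\Pcal$ with $\KL(Q\|P_n)\to0$.

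By Pinsker's inequality \eqref{eq:pink}, $\|P_n-Q\|_{\TV}\to0$. In particular $P_n\Rightarrow Q$ weakly, since total variation convergence implies convergence of $\int g\,dP_n$ for every bounded measurable $g$, and a fortiori for every $g\in C_b(\X)$. Lemma~\ref{lem:f-lsc} says $P\mapsto D_f(P\|P_0)$ is weakly lower semicontinuous. Hence
\[
D_f(Q\|P_0)\le\liminf_{n\to\infty}D_f(P_n\|P_0)\le r ,
\]
so $Q\in\Pcal$, a contradiction. Therefore $\Phi_\Pcal(Q)>0$. The argument also covers the case $\Phi_\Pcal(Q)=+\infty$, which is trivially positive. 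For finite intersections of $f$-ball constraints, the same argument applies to each constraint separately.

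The only step needing care is Lemma~\ref{lem:f-lsc} itself, which we are allowed to assume. If one prefers not to rely on weak lower semicontinuity, an alternative is convexity plus TV-lower semicontinuity of $D_f(\cdot\|P_0)$. Lower semicontinuity under setwise (TV) convergence follows from the variational representation
\[
D_f(P\|P_0)=\sup_g\Bigl\{\int g\,dP-\int f^*(g)\,dP_0\Bigr\}
\]
over bounded measurable $g$. Each functional inside the supremum is TV-continuous in $P$, and a supremum of continuous functionals is lower semicontinuous. This is the fallback if the weak version were unavailable.

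Note that we never need to know whether the infimum defining $\Phi_\Pcal(Q)$ is attained. The approximating sequence $P_n$ suffices, because closedness of $\Pcal$ in TV is exactly what the argument uses.
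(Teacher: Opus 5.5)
Your proof is correct and has the same structure as the paper's. You assume $\Phi_\Pcal(Q)=0$, use Pinsker to get $P_n\to Q$ in total variation, and conclude $D_f(Q\|P_0)\le\liminf_n D_f(P_n\|P_0)\le r$.

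The only difference is how you justify that last inequality. You pass from total variation to weak convergence and invoke Lemma~\ref{lem:f-lsc}. The paper instead proves lower semicontinuity along the TV-convergent sequence directly: it takes densities with respect to a common dominating measure, extracts an a.e.\ convergent subsequence, and applies Fatou's lemma to the convex integrand. That route does not rely on the variational formula behind Lemma~\ref{lem:f-lsc}.
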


\begin{theorem}[Weak lower semicontinuity for \(f\)-balls]\label{thm:fball-lsc}
Let \(\X\) be Polish, let \(P_0\in\M(\X)\), and let
\(f:[0,\infty)\to[0,\infty]\) satisfy the properties needed to define the Csiszar divergence \(D_f(P\|P_0)\) by \eqref{eq:fdiv}.  Let $\Pcal$ be the f-divergence ball in~\eqref{eq:fball}.
Then
\(
  Q\mapsto \Phi_{\Pcal}(Q)
\)
is weakly lower semicontinuous on \(\M(\X)\).
\end{theorem}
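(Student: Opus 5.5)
The plan is to verify KL-local compactness of the $f$-ball $\Pcal$ (Definition~\ref{def:kl-local-compact}) and then invoke Proposition~\ref{prop:kl-local-compact}. Weak compactness itself is not available: when $f'_\infty<\infty$, for instance for total variation balls, mass can escape to infinity inside $\Pcal$. So the localization to denominators at bounded KL distance from a weakly convergent alternative sequence is essential.

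Fix $Q_n\Rightarrow Q$ and $P_n\in\Pcal$ with $\sup_n\KL(Q_n\|P_n)\le M<\infty$. The first step is closedness. By Lemma~\ref{lem:f-lsc}, the sublevel set $\{P:D_f(P\|P_0)\le r\}$ is weakly closed. Hence any weak limit point of $(P_n)$ lies in $\Pcal$, and it only remains to prove that $(P_n)$ is tight; Prokhorov's theorem then supplies the convergent subsequence.

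The second step, and the main obstacle, is tightness of $(P_n)$. The two available controls are these.
\begin{enumerate}
\item $(Q_n)$ is tight, by Prokhorov applied to a convergent sequence in a Polish space.
\item $\KL(Q_n\|P_n)\le M$.
\end{enumerate}
Neither control alone prevents $P_n$ from leaking mass, since KL bounds only penalize $P_n$ being small where $Q_n$ sits. A bounded $\KL(Q_n\|P_n)$ does force $P_n$ to retain mass on compact sets $K$ where $Q_n$ concentrates. Using the data-processing inequality on the partition $\{K,K^c\}$ gives
\[
Q_n(K)\log\frac{Q_n(K)}{P_n(K)}+Q_n(K^c)\log\frac{Q_n(K^c)}{P_n(K^c)}\le M,
\]
so $P_n(K)\ge c>0$ uniformly. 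However, this lower bound does not bound $P_n(K^c)$ above.

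The $f$-ball constraint must therefore supply the rest. I would split into cases on $f'_\infty$.
\begin{itemize}
\item If $f'_\infty=\infty$, then $P_n\ll P_0$ with $\int f(h_n)\,dP_0\le r$, so the densities $h_n$ are uniformly $P_0$-integrable by de la Vallée-Poussin's criterion. Hence $P_n(K^c)=\int_{K^c}h_n\,dP_0$ is uniformly small when $P_0(K^c)$ is small, which gives tightness from that of $P_0$.
\item If $f'_\infty<\infty$, tightness can genuinely fail, since escaping singular mass is allowed. In this case I would abandon subsequence compactness and argue directly. Write $P_n=P_n^{K}+P_n^{\mathrm{esc}}$, where the escaping part is $P_n$ restricted to the complement of a large compact set. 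Remove the escaping part and renormalize to obtain $\tilde P_n:=P_n(\cdot\cap K)/P_n(K)$, reallocating the removed mass onto $P_0$-absolutely continuous or compactly supported parts so that the result stays inside $\Pcal$. Convexity of $D_f$ in $P$ and $D_f(P_0\|P_0)=0$ help here, through mixtures $(1-\epsilon)P_n+\epsilon P_0$. The aim is to show that $\KL(Q_n\|\cdot)$ increases by at most $o(1)$ as the compact set grows, using that $Q_n(K^c)$ is uniformly small.
\end{itemize}
The modified sequence is tight and lies in $\Pcal$. Along a subsequence it converges weakly to some $P\in\Pcal$, and Lemma~\ref{lem:kl-lsc} then gives $\Phi_\Pcal(Q)\le\KL(Q\|P)\le\liminf_n\Phi_\Pcal(Q_n)+o(1)$.

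The delicate point I expect is controlling both $D_f$ and KL simultaneously in this modification step. I would first try mixing with $P_0$ at weight $\epsilon$ after truncation: the tail contributions to the divergence are governed by $f'_\infty P_n(K^c)$, so shrinking by a factor $(1-\epsilon)$ should absorb the $D_f$ excess. The same argument handles finite intersections of such constraints, treating each constraint separately.
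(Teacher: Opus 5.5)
\textbf{Verdict: genuine gap.} The $f'_\infty=\infty$ case is fine, but the construction you propose for $f'_\infty<\infty$ does not work.

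\textbf{What is correct.} When $f'_\infty=\infty$, de la Vall\'ee-Poussin makes the densities uniformly $P_0$-integrable. So the ball is tight and, with Lemma~\ref{lem:f-lsc}, weakly compact.

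\textbf{Where it fails.} The case $f'_\infty<\infty$ is the only one that needs an idea. Your key claim there is that truncating to a compact $K$, renormalizing, and mixing with $P_0$ raises $\KL(Q_n\|\cdot)$ by only $o(1)$ ``using that $Q_n(K^c)$ is uniformly small.'' This is false. Your surgery deletes $P_n$'s mass on $K^c$ and replaces it with $P_0$-absolutely continuous or compactly supported mass. But $Q_n$ is only tight, so it still charges $K^c$. The KL contribution of $K^c$ is governed by the denominator there, not by the size of $Q_n(K^c)$.

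\textbf{Counterexample.} Take $\X=\RR$, $P_0=N(0,1)$, and the ball $\{P:\sup_A|P(A)-P_0(A)|\le r\}$ with $r\in(0,1)$. Let $Q_n=Q$ have an atom at every integer, e.g.\ $Q\{k\}\propto 2^{-|k|}$. Put $P_n=(1-r)P_0+\frac r2 Q+\frac r2\delta_{n+1/2}$. Then $P_n$ lies in the ball, $\KL(Q\|P_n)\le\log(2/r)$, and $(P_n)$ is not tight. Yet for every compact $K$, any measure built by deleting $P_n$ on $K^c$ and adding $P_0$-absolutely continuous or compactly supported mass misses infinitely many atoms of $Q$. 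Its KL from $Q$ is therefore $+\infty$.

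\textbf{The $D_f$ bookkeeping is also unsupported.} Since $P_n(K^c)$ does not tend to zero uniformly (that is what non-tightness means), a renormalization cost of order $f'_\infty P_n(K^c)$ cannot be absorbed by an $\epsilon$-mixture. You would need a uniform-in-$n$ cancellation against two terms:
\begin{itemize}
\item the removed tail $\int_{K^c}f(h_n)\,dP_0$;
\item the new cost of the (near-)zero density on $K^c$, which blows up as $\epsilon\downarrow0$ when $f(0)=+\infty$, as for reverse-KL balls with $f(t)=t-1-\log t$.
\end{itemize}
In addition, mixing cannot absorb any positive excess when $r=0$.

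\textbf{The missing idea.} Pass to the limit before doing any surgery, so that only mass invisible to $Q$ has to be moved. The paper embeds $\X$ as a Borel subset of a compact metric space $\bar K$ and extracts $P_n\Rightarrow\bar P$ on $\bar K$. Lower semicontinuity (with $P_0$ extended by zero) then gives $\KL(Q\|\bar P)\le\liminf_n\KL(Q_n\|P_n)$ and $D_f(\bar P\|P_0)\le r$. The escaped mass $\bar P(\bar K\setminus\X)$ is not charged by $Q$ and is singular for $P_0$. Moving it to a point $x_0\in\X$ only enlarges the denominator seen by $Q$. It does not increase $D_f$, because $f(u+v)-f(u)\le vf'_\infty$ (Lemma~\ref{lem:recycle}). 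No uniform-in-$n$ tail estimates are needed.

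\textbf{A finite-$n$ repair would need more.} Your route could likely be repaired, but only by:
\begin{itemize}
\item keeping the part of $P_n$ on $K^c$ that $Q_n$ charges, e.g.\ $P_n\wedge(Q_n+P_0)$, which is still tight;
\item bounding the resulting KL loss by a term like $\sup_n Q_n(K^c)\log\bigl(1/Q_n(K^c)\bigr)$;
\item proving a Jensen-type uniform bound on the $D_f$ tail.
\end{itemize}
None of this is in the proposal.
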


\begin{proof}
Let \(Q_n\Rightarrow Q\).  If \(\liminf_n\Phi_{\Pcal}(Q_n)=\infty\), there is nothing to prove.  Passing to a subsequence, assume that this liminf is finite and is attained along the whole subsequence.  Choose \(P_n\in\Pcal\) such that
\(
  \KL(Q_n\|P_n)\le \Phi_{\Pcal}(Q_n)+1/n.
\)
Embed \(\X\) as a Borel subset of a compact metric space \(K\), and view all measures as measures on \(K\).  Since \(K\) is compact, a further subsequence satisfies \(P_n\Rightarrow\bar P\in\M(K)\).  Also \(Q_n\Rightarrow Q\) in \(\M(K)\), because weak convergence on \(\X\) is preserved by the embedding.  By Lemmata \ref{lem:kl-lsc} and \ref{lem:f-lsc},
\[
  \KL(Q\|\bar P)
  \le \liminf_n\KL(Q_n\|P_n),
  \qquad
  D_f(\bar P\|P_0)
  \le \liminf_nD_f(P_n\|P_0)
  \le r.
\]
Apply Lemma~\ref{lem:recycle} to obtain \(P^\circ\in\M(\X)\) such that \(D_f(P^\circ\|P_0)\le r\) and \(\KL(Q\|P^\circ)\le\KL(Q\|\bar P)\).  Then
\[
  \Phi_{\Pcal}(Q)
  \le\KL(Q\|P^\circ)
  \le\KL(Q\|\bar P)
  \le\liminf_n\KL(Q_n\|P_n)
  =\liminf_n\Phi_{\Pcal}(Q_n).
\]
This proves weak lower semicontinuity.
\end{proof}

\subsection{Integral Probability Metric Balls}

Let \(\Fcal\) be a class of real-valued measurable functions on \(\X\).  Its integral probability metric is
\begin{equation}\label{eq:ipm}
  \gamma_{\Fcal}(P,R)
  :=\sup_{f\in\Fcal}\left|\int f\dd P-\int f\dd R\right|,
\end{equation}
and the corresponding null ball is
\begin{equation}\label{eq:ipm-ball}
  \Pcal_{\Fcal}(\rho)
  :=\{P\in\M(\X):\gamma_{\Fcal}(P,P_0)\le \rho\}.
\end{equation}
 For arbitrary measurable \(\Fcal\), IPM balls need not be weakly closed, and without some additional closedness, tightness, or other property, lower semicontinuity of \(\Phi\) should not be expected.

 If every \(f\in\Fcal\) is bounded and continuous, then \(\Pcal_{\Fcal}(\rho)\) is weakly closed, because it is the intersection over \(f\in\Fcal\) of the closed slabs
\(
  \left\{P:\left|\int f\dd P-\int f\dd P_0\right|\le\rho\right\}.
\)
Thus \(\Pcal_{\Fcal}(\rho)\) is weakly compact whenever it is also uniformly tight.  Uniform tightness is automatic if \(\X\) is compact.  More generally, it follows if the IPM constraint controls a coercive moment: for example, if \(V:\X\to[0,\infty]\) has compact sublevel sets and the ball implies \(\sup_{P\in\Pcal_{\Fcal}(\rho)}\int V\dd P<\infty\), then Prokhorov's theorem gives tightness.

\begin{lemma}[Strict positivity for bounded IPMs]\label{lem:ipm-positive}
Assume \(\sup_{f\in\Fcal}\|f\|_\infty<\infty\).  If \(Q\notin\Pcal_{\Fcal}(\rho)\), then \(\Phi_{\Pcal_{\Fcal}(\rho)}(Q)>0\).
\end{lemma}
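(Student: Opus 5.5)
The plan is to bound $\KL(Q\|P)$ from below by a total-variation quantity via Pinsker's inequality~\eqref{eq:pink}. The separation of $Q$ from the ball is measured in $\gamma_\Fcal$, and for bounded $\Fcal$ this is dominated by total variation. Set $B:=\sup_{f\in\Fcal}\|f\|_\infty<\infty$. For any probability measures $P,Q$ and any $f\in\Fcal$,
\[
\left|\int f\,dQ-\int f\,dP\right|\le 2B\,\|Q-P\|_{\TV},
\]
using the convention $\|\mu-\nu\|_{\TV}=\sup_A|\mu(A)-\nu(A)|$. Taking the supremum over $f\in\Fcal$ gives $\gamma_\Fcal(Q,P)\le 2B\|Q-P\|_{\TV}$. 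The case $B=0$ is trivial: then $\gamma_\Fcal\equiv0$, the ball is all of $\M(\X)$, and there is no $Q$ outside it.

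Next, because $Q\notin\Pcal_\Fcal(\rho)$, the gap $\eta:=\gamma_\Fcal(Q,P_0)-\rho$ is strictly positive. Note that $\gamma_\Fcal(Q,P_0)\le 2B<\infty$, so $\eta$ is a finite positive number. For any $P\in\Pcal_\Fcal(\rho)$, the triangle inequality for the pseudometric $\gamma_\Fcal$ gives
\[
\gamma_\Fcal(Q,P)\ge \gamma_\Fcal(Q,P_0)-\gamma_\Fcal(P,P_0)\ge \eta.
\]
The triangle inequality holds because $\gamma_\Fcal$ is a supremum of the seminorm-type quantities $|\int f\,d(\cdot)-\int f\,d(\cdot)|$. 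Combining this with the first step yields $\|Q-P\|_{\TV}\ge \eta/(2B)$ uniformly over the ball.

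Finally, Pinsker's inequality gives
\[
\KL(Q\|P)\ge 2\|Q-P\|_{\TV}^2\ge \frac{\eta^2}{2B^2}
\]
for every $P\in\Pcal_\Fcal(\rho)$. If $\KL(Q\|P)=\infty$, the bound holds trivially. Taking the infimum over $P$ gives $\Phi_{\Pcal_\Fcal(\rho)}(Q)\ge \eta^2/(2B^2)>0$.

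There is no real obstacle here. The only points needing care are the normalization of TV, which only affects constants, and measurability of $f$, which is assumed so that the integrals are defined. No closedness or tightness of the ball is needed, because the lower bound is uniform in $P$.
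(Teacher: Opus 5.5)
Your proof is correct and uses the same ingredients as the paper: Pinsker's inequality, the bound $\gamma_{\Fcal}\le 2B\,d_{\TV}$ for a uniformly bounded class, and the triangle inequality for $\gamma_{\Fcal}$. The paper argues by contradiction along a sequence $P_m$ with $\KL(Q\|P_m)\to0$, whereas you argue directly and obtain the explicit bound $\Phi_{\Pcal_{\Fcal}(\rho)}(Q)\ge \eta^2/(2B^2)$, a slightly stronger, quantitative form of the same argument.
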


\begin{proposition}[Precompact continuous IPM classes]\label{prop:precompact-F}
Assume \(\rho>0\).  Suppose that \(\Fcal\subset C_b(\X)\) is uniformly bounded and totally bounded in the sup norm.  Then \(Q\mapsto\Phi_{\Pcal_{\Fcal}(\rho)}(Q)\) is weakly lower semicontinuous and it is strictly positive on \(\Pcal_{\Fcal}(\rho)^c\).
\end{proposition}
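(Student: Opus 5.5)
The plan is to handle the two claims separately. Strict positivity on $\Pcal_{\Fcal}(\rho)^c$ is exactly Lemma~\ref{lem:ipm-positive}, because $\Fcal$ is uniformly bounded. So the work is lower semicontinuity, and the template is the proof of Theorem~\ref{thm:fball-lsc}. Take $Q_n\Rightarrow Q$ and pass to a subsequence along which $\Phi(Q_n)\to L<\infty$. Choose near-minimizers $P_n\in\Pcal_{\Fcal}(\rho)$ with $\KL(Q_n\|P_n)\le\Phi(Q_n)+1/n$. The goal is to build some $P^\circ$ in the ball with $\KL(Q\|P^\circ)\le L+\epsilon$ for arbitrary $\epsilon>0$. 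The difficulty, compared with the weakly compact case, is that $(P_n)$ need not be tight: bounded KL against a tight sequence $(Q_n)$ only forces $P_n$ to keep some mass where $Q_n$ lives. The remaining mass may escape to infinity, and the constraint is stated through functions $f$ that are not defined at infinity.

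\emph{Step 1 (limit of the KL part).} Embed $\X$ as a Borel subset of a compact metric space $K$, as in Theorem~\ref{thm:fball-lsc}, and extract $P_n\Rightarrow\bar P$ on $K$. Lemma~\ref{lem:kl-lsc} then gives $\KL(Q\|\bar P)\le L$. Write $\bar P=\bar P|_\X+\bar P|_{K\setminus\X}$. Since $Q(K\setminus\X)=0$, the mass on $K\setminus\X$ is irrelevant to the KL bound. More precisely, if we replace $\bar P|_{K\setminus\X}$ by any measure $\mu$ on $\X$ of the same total mass, the density of $Q$ with respect to the new measure can only decrease. Hence $\KL(Q\|\bar P|_\X+\mu)\le\KL(Q\|\bar P)\le L$.

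\emph{Step 2 (control of the constraint).} This is where total boundedness of $\Fcal$ enters. Let $\bar\Fcal$ be the sup-norm closure of $\Fcal$, which is compact. Each measure $P$ defines a function $\Lambda_P(f):=\int f\,dP$ on $\bar\Fcal$, and all these functions are $1$-Lipschitz in sup norm and uniformly bounded. By Arzel\`a--Ascoli, after a further subsequence the $\Lambda_{P_n}$ converge uniformly on $\bar\Fcal$. I would fix a compact $K_m\subseteq\X$ with $Q(K_m)$ and $Q_n(K_m)$ close to $1$, and also have $\bar P(\partial K_m)=0$. Split $P_n=P_n|_{K_m}+P_n|_{K_m^c}$. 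The first pieces are tight and converge weakly to a limit agreeing with $\bar P|_\X$ on $K_m$. The second pieces have functionals that converge uniformly on $\bar\Fcal$ (after a diagonal extraction) to some $L_m$. This $L_m$ is a uniform limit of functionals of genuine measures on $\X$, namely $\Lambda_{P_n|_{K_m^c}}$. So for any $\eta>0$ I can take $\mu:=P_{n}|_{K_m^c}$ for one large $n$, and it lies within $\eta$ of $L_m$ uniformly over $\Fcal$. Its mass is also close to the limiting escaping mass. Set $P^\circ_{m}:=\bar P|_{K_m}+\mu$, renormalized. Then $\gamma_{\Fcal}(P^\circ_m,P_0)\le\rho+O(\eta)+o_m(1)$. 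Combined with a KL bound of Step~1 type, up to an error that is small as $m\to\infty$, this gives $\KL(Q\|P^\circ_m)\le L+o(1)$.

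\emph{Step 3 (using $\rho>0$).} The candidate from Step~2 overshoots the radius only by a small $\eta'$. Mix it with the center: $P^\lambda:=(1-\lambda)P^\circ_m+\lambda P_0$. This satisfies $\gamma_{\Fcal}(P^\lambda,P_0)\le(1-\lambda)(\rho+\eta')\le\rho$ once $\eta'\le\lambda\rho/(1-\lambda)$. It also satisfies $\KL(Q\|P^\lambda)\le\KL(Q\|P^\circ_m)-\log(1-\lambda)$. Letting $\eta'\to0$, $m\to\infty$ and then $\lambda\to0$ yields $\Phi(Q)\le L$.

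The main obstacle is Step~2. The KL bound naturally lives on the compactification, while the constraint lives on $\X$ through functions that do not extend to $K$. One must check that the truncation at $K_m$ and the replacement of escaping mass by a single $P_n|_{K_m^c}$ cost only $o(1)$ in both the KL divergence and the IPM simultaneously. If the bookkeeping of masses under renormalization gets awkward, my fallback is to absorb all such slack into the mixing step, which is exactly what $\rho>0$ is there to allow.
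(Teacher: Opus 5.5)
Your positivity argument and Steps~1 and~3 are fine. Step~2, however, does not go through, for two independent reasons.

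\emph{First, the KL bound for $P^\circ_m=\bar P|_{K_m}+P_n|_{K_m^c}$ is false.} The monotonicity argument of Step~1 only lets you replace mass on $K\setminus\X$, which is $Q$-null. Here you discard $\bar P|_{\X\setminus K_m}$, which sits where $Q$ has mass, and replace it by the tail of a single $P_n$. Nothing makes $Q|_{K_m^c}$ absolutely continuous with respect to that tail. For instance, take $\X=\RR$, $\Fcal=\{h\}$ with $h\in C_b(\RR)$, $P_0=Q=N(0,1)$, and $Q_n\Rightarrow Q$ finitely supported. Then $P_n=Q_n$ is an admissible near-minimizer with $\Phi(Q_n)=0$ for large $n$. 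Yet $\KL(Q\|P^\circ_m)=+\infty$ for every $m$ and every choice of $n$, since every compact $K_m$ has $Q(K_m^c)>0$. Mixing with $P_0$ repairs an IPM overshoot, not this.

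\emph{Second, the IPM bookkeeping also fails.} Suppose you keep all of $\bar P|_\X$, which fixes the KL side at an IPM cost of order $\bar P(\X\setminus K_m)$. You still need $\Lambda_{P_n|_{K_m}}\to\Lambda_{\bar P|_{K_m}}$, which you obtain from $\bar P(\partial K_m)=0$. That condition is unavailable for general Polish $\X$. In an infinite-dimensional separable Hilbert space, compact sets have empty interior in $\X$, hence in $K$, so $\partial_K K_m=K_m$ and the condition forces $\bar P(K_m)=0$. Without it, the weak limit of $P_n|_{K_m}$ is only some $\nu_m\le\bar P|_{K_m}$, because mass of $P_n$ lying just outside $K_m$ can converge into $K_m$. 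So $\Lambda_{\bar P|_{K_m}}+L_m$ need not equal $\lim_n\Lambda_{P_n}$, and the bound $\gamma_{\Fcal}(P^\circ_m,P_0)\le\rho+o(1)$ is lost.

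\emph{The missing idea.} The obstacle you flagged, that the $f\in\Fcal$ do not extend to $K$, is exactly what the paper removes rather than works around. Total boundedness gives sup-norm separability of $\Fcal$, so by Lemma~\ref{lem:separable-compactification} you can choose the compact metric $K$ (embedding $\X$ jointly with a dense sequence from $\Fcal$) so that every $f\in\Fcal$ has a continuous extension $\bar f\in C(K)$. Then $P_n\Rightarrow\bar P$ on $K$ directly gives $\sup_{f\in\Fcal}|\int_K\bar f\,d\bar P-\int f\,dP_0|\le\rho$, with no truncation inside $\X$. Only the $Q$-null boundary mass $m=\bar P(K\setminus\X)$ then needs recycling:
\begin{enumerate}
\item approximate its normalized law $B$ by probabilities $S_j$ on $\X$, using density of $\X$ in $K$;
\item upgrade $S_j\Rightarrow B$ to uniform convergence over $\Fcal$ via total boundedness (this is where your Arzel\`a--Ascoli observation belongs);
\item set $P_\eta=\bar P|_\X+mS_j$.
\end{enumerate}
This $P_\eta$ dominates $\bar P|_\X$, so your Step~1 applies verbatim, and your Step~3 mixing finishes as in Theorem~\ref{thm:ipm-recycle-lsc}.

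If you prefer to keep a generic $K$, the lemma your route is missing is this: $\Fcal$ is uniformly approximately continuous in the $K$-metric near compact subsets of $\X$. You would also have to truncate with continuous cutoffs on $K$ rather than indicators of $K_m$.
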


On a compact sample space, many familiar classes are totally bounded by Arzela--Ascoli: uniformly bounded equicontinuous classes, bounded-Lipschitz balls, and unit balls of reproducing kernel Hilbert spaces (RKHSs) with continuous kernels.  On a noncompact sample space, total boundedness in the global sup norm is much stronger and often fails; the MMD and KS examples below use more tailored recycling arguments.
To begin with the MMD, let \(k:\X\times\X\to\RR\) be a bounded continuous positive-definite kernel with separable RKHS \(\mathcal H\).  Let \(\varphi(x):=k(x,\cdot)\in\mathcal H\), and assume \(\sup_x\|\varphi(x)\|_{\mathcal H}<\infty\).  The MMD is the IPM over the RKHS unit ball:
\[
  \MMD_k(P,R)
  :=\sup_{\|h\|_{\mathcal H}\le1}\left|\int h\dd P-\int h\dd R\right|
  =\left\|\mu_P-\mu_R\right\|_{\mathcal H},
  \qquad
  \mu_P:=\int\varphi\dd P.
\]

\begin{theorem}[MMD balls]\label{prop:mmd-lsc}
For every \(\rho>0\), the map
\(
Q\mapsto\Phi_{\Pcal}(Q)
\)
is weakly lower semicontinuous.  It is strictly positive whenever \(\MMD_k(Q,P_0)>\rho\).
\end{theorem}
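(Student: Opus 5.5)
Here $\Pcal=\{P:\MMD_k(P,P_0)\le\rho\}$. I would copy the structure of Theorem~\ref{thm:fball-lsc}: compactify, extract a limit of near-optimal denominators, then recycle that limit back into an honest element of $\Pcal$.

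\textbf{Setup.} Let $Q_n\Rightarrow Q$. Passing to a subsequence, assume $\Phi_\Pcal(Q_n)\to L:=\liminf_n\Phi_\Pcal(Q_n)$ with $L<\infty$, since otherwise there is nothing to prove. Choose $P_n\in\Pcal$ with $\KL(Q_n\|P_n)\le\Phi_\Pcal(Q_n)+1/n$. Embed $\X$ as a Borel subset of a compact metric space $K$ and extract $P_n\Rightarrow\bar P\in\M(K)$. Lemma~\ref{lem:kl-lsc} on $K$ gives $\KL(Q\|\bar P)\le L<\infty$. In particular $Q\ll\bar P$, so $\bar P=\lambda\bar P_\X+(1-\lambda)\bar P_{\mathrm{out}}$, where $\bar P_\X$ is concentrated on $\X$, $\lambda>0$, and $\bar P_{\mathrm{out}}$ lives on $K\setminus\X$.

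\textbf{Recycling (the main obstacle).} The mean embedding $\mu_P$ is not continuous on $\M(K)$, because $\varphi$ need not extend continuously to $K$. So I cannot conclude directly that $\MMD_k(\bar P_\X,P_0)\le\rho$. Instead I would work in $\mathcal H$. The embeddings $\mu_{P_n}$ lie in the closed ball $\{\|\mu-\mu_{P_0}\|\le\rho\}$, which is weakly compact in $\mathcal H$. Pass to a further subsequence with $\mu_{P_n}\rightharpoonup m$ weakly in $\mathcal H$; by weak lower semicontinuity of the norm, $\|m-\mu_{P_0}\|\le\rho$.

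To describe $m$, split each $P_n$ using a continuous cutoff $\chi$ that equals $1$ on a large compact $K_0\subseteq\X$. On the part carried by $\chi$, testing $\langle m,h\rangle$ against $h$ and using that $h$ is bounded continuous on $\X$ identifies the contribution as $\lambda\mu_{\bar P_\X}$. The escaping mass, of total weight $1-\lambda$, contributes some element $\nu$ in the weak closure of $\{\mu_\delta:\delta\in\M(\X)\}$ scaled by $(1-\lambda)$. This closure is convex and bounded, and is contained in $(1-\lambda)\overline{\mathrm{conv}}\{\mu_S:S\in\M(\X)\}$. Hence
\[
m=\lambda\mu_{\bar P_\X}+(1-\lambda)\nu .
\]

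Now define $P^\circ:=\lambda\bar P_\X+(1-\lambda)S$, where $S\in\M(\X)$ is chosen with $\mu_S$ approximating $\nu$. This choice gives $\KL(Q\|P^\circ)\le\KL(Q\|\bar P)$, since $Q$ charges only $\X$ and the density $dQ/dP^\circ$ on $\X$ equals $dQ/d\bar P$.

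The remaining difficulty is exact feasibility, because $\mu_S$ only approximates $\nu$. I would handle this using $\rho>0$ and convexity. Mix with $P_0$: take $P^\circ_\epsilon:=(1-\epsilon)P^\circ+\epsilon P_0$, which lies within $(1-\epsilon)\rho+(1-\epsilon)\eta$ of $\mu_{P_0}$ in MMD, where $\eta$ is the approximation error for $\nu$. This is at most $\rho$ once $\eta\le\epsilon\rho/(1-\epsilon)$. Also $\KL(Q\|P^\circ_\epsilon)\le\KL(Q\|P^\circ)+\log\frac{1}{1-\epsilon}$. Letting $\epsilon\downarrow0$ yields $\Phi_\Pcal(Q)\le L$, which is the desired weak lower semicontinuity.

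\textbf{Positivity.} This follows from Lemma~\ref{lem:ipm-positive}, because the RKHS unit ball is uniformly bounded: $|h(x)|\le\sup_x\|\varphi(x)\|$. A direct proof is also available. If $\Phi_\Pcal(Q)=0$, Pinsker's inequality gives $P_n\in\Pcal$ with $\|Q-P_n\|_{\TV}\to0$. Then $\MMD_k(Q,P_n)\le2\sup_x\|\varphi(x)\|\,\|Q-P_n\|_{\TV}\to0$, so $\MMD_k(Q,P_0)\le\rho$, contradicting the assumption $\MMD_k(Q,P_0)>\rho$.
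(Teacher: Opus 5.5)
Your overall architecture is the paper's: compactify, extract $P_n\Rightarrow\bar P$, recycle the boundary mass through the closed convex hull of $\varphi(\X)$, then mix with $P_0$ using $\rho>0$. Your positivity argument is Lemma~\ref{lem:ipm-positive}. The gap is the step you flag as the main obstacle: the identity $m=\lambda\mu_{\bar P_\X}+(1-\lambda)\nu$ with $\nu\in\overline{\mathrm{conv}}\,\varphi(\X)$. This identity is the only link between the KL limit $\bar P$ and the MMD limit $m$, and the cutoff sketch does not prove it.
\begin{itemize}
\item If $\chi\in C(K)$, then $\chi h$ need not extend continuously to $K$, because $h$ need not extend to your arbitrary $K$ (as you note). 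So $P_n\Rightarrow\bar P$ says nothing about $\int\chi h\,dP_n$.
\item If $\chi\in C_b(\X)$, the $P_n$ are not tight, and again nothing converges.
\item Even granting convergence, a cutoff equal to $1$ on a fixed compact $K_0$ captures mass $\bar P(K_0)$, not $\lambda$. The complementary piece mixes non-escaping mass outside $K_0$ with mass drifting to $K\setminus\X$. You assert, but do not derive, that its limiting contribution is exactly $(1-\lambda)$ times a point of $\overline{\mathrm{conv}}\,\varphi(\X)$.
\end{itemize}

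The identity is true, and there are two ways to prove it.

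\emph{The paper's route.} It chooses a compactification adapted to $\mathcal H$ (Lemma~\ref{lem:separable-compactification} applied to the RKHS unit ball), so every $h$ extends continuously. Lemma~\ref{lem:hilbert-feature-extension} then gives a bounded, weakly continuous $\bar\varphi$ on $K$. Now $\mu_{P_n}\rightharpoonup\int_K\bar\varphi\,d\bar P$ directly, the split into $\X$ and $K\setminus\X$ parts is immediate, and the boundary barycenter lies in $\overline{\mathrm{conv}}\,\varphi(\X)$ by density and weak continuity.

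\emph{A repair that keeps your generic $K$} (with $\X$ dense). You only need a one-sided bound. For $h\in\mathcal H$ let $s:=\sup_{\X}h$ and $g:=s-h\ge0$. The lower envelope $\underline g(z):=\liminf_{y\to z,\,y\in\X}g(y)$ is lower semicontinuous and nonnegative on $K$, and it equals $g$ on $\X$. Portmanteau therefore gives $\liminf_n\int g\,dP_n\ge\int_\X g\,d\bar P$, which is
\[
\langle m-\lambda\mu_{\bar P_\X},h\rangle\le(1-\lambda)\sup_{x\in\X}h(x)\qquad\text{for all }h\in\mathcal H.
\]
Hahn--Banach separation then yields $m-\lambda\mu_{\bar P_\X}\in(1-\lambda)\,\overline{\mathrm{conv}}\,\varphi(\X)$, and $m=\lambda\mu_{\bar P_\X}$ when $\lambda=1$. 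This bypasses the vector-valued extension lemma entirely.

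With either fix, the rest of your argument goes through. The only other slip is that $dQ/dP^\circ\le dQ/d\bar P$ on $\X$ rather than equality; the inequality is all you need.
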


No characteristic assumption is needed for lower semicontinuity.  Characteristicness determines whether \(\MMD_k(P,Q)=0\) implies \(P=Q\), but the REGROW theorem only needs lower semicontinuity of the distance-to-null benchmark and positivity outside the named MMD ball.

On \(\X=\RR\), the one-dimensional Kolmogorov--Smirnov distance from \(P\) to \(P_0\) is
\[
  d_{\KS}(P,P_0):=\sup_{x\in\RR}|F_P(x)-F_{P_0}(x)|,
  \qquad F_P(x):=P((-
\infty,x]).
\]
This is the IPM generated by lower-half-line indicators \(\one\{(-\infty,x]\}:x\in\RR\), which are bounded but not continuous.  Thus the previous continuous-function propositions do not apply directly.  Nevertheless, the same recycling idea works.

\begin{theorem}[Kolmogorov--Smirnov balls]\label{thm:ks-lsc}
For every \(\rho>0\), the map
\(
  Q\mapsto\Phi_\Pcal(Q)
\)
is weakly lower semicontinuous on \(\M(\RR)\).  It is strictly positive whenever \(d_{\KS}(Q,P_0)>\rho\).  For \(\rho=0\), the null is \(\{P_0\}\), so the same conclusion reduces to the usual weak lower semicontinuity of \(\KL(Q\|P_0)\).
\end{theorem}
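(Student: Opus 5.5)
The plan follows the template of Theorem~\ref{thm:fball-lsc}: a compactification, joint lower semicontinuity, then a recycling step. Let $\Pcal:=\{P\in\M(\RR):d_{\KS}(P,P_0)\le\rho\}$ and take $Q_n\Rightarrow Q$. Pass to a subsequence along which $\Phi_\Pcal(Q_n)\to L:=\liminf_n\Phi_\Pcal(Q_n)$, and assume $L<\infty$. Choose near-minimizers $P_n\in\Pcal$ with $\KL(Q_n\|P_n)\le\Phi_\Pcal(Q_n)+1/n$.

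\textbf{Step 1 (compactification).} Work on $\overline\RR=[-\infty,\infty]$. There a further subsequence gives $P_n\Rightarrow\bar P\in\M(\overline\RR)$, and Lemma~\ref{lem:kl-lsc} gives $\KL(Q\|\bar P)\le L$. Since $Q$ lives on $\RR$ and $Q\ll\bar P$, only the mass of $\bar P$ at $\pm\infty$ prevents $\bar P$ from being a measure on $\RR$.

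\textbf{Step 2 (the KS constraint survives in the limit).} Write $\bar F(x):=\bar P([-\infty,x])$ for $x\in\RR$. At every continuity point $x$ of $\bar F$, the portmanteau theorem gives $F_{P_n}(x)\to\bar F(x)$. Hence $|\bar F(x)-F_{P_0}(x)|\le\rho$ on a dense set of $x$. Right-continuity of both functions extends this to all $x\in\RR$. Letting $x\to\mp\infty$ also gives $\bar P(\{-\infty\})\le\rho$ and $\bar P(\{+\infty\})\le\rho$. The point to check is that the indicator test functions are discontinuous, but the dense-set argument only uses continuity points, so this works.

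\textbf{Step 3 (recycling, the main obstacle).} Build $P^\circ\in\M(\RR)$ with $d_{\KS}(P^\circ,P_0)\le\rho$ and $\KL(Q\|P^\circ)\le\KL(Q\|\bar P)$. Let $m_\pm:=\bar P(\{\pm\infty\})$. The first idea is to leave $\bar P$ unchanged on $\RR$ and move the mass $m_-$ to a far-left point $-M$ and $m_+$ to a far-right point $+M$. This can only increase the density of the $\RR$-part against $Q$, because it adds mass, so the KL does not go up.

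The difficulty is the KS constraint. Near $-M$ the new CDF jumps by $m_-$, so it may exceed $F_{P_0}+\rho$ there, since $F_{P_0}(-M)\approx0$ while $\bar F(-M)\approx m_-$.

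To avoid this, I would take $M$ to be an atom-free point of $Q$. If necessary, I would instead spread the mass $m_-$ over $(-\infty,-M]$ so that it tracks a copy of $P_0$ restricted there, rescaled so the CDF stays within the band.

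A cleaner alternative is to define $P^\circ$ directly by its CDF, $G:=\max\{\bar F,\,F_{P_0}-\rho\}$ near $-\infty$ and $\min\{\bar F+m_-,\,F_{P_0}+\rho\}$ near $+\infty$. One then checks three things: $G$ is a genuine CDF on $\RR$; $G$ stays in the KS band; and the measure it defines dominates the $\RR$-part of $\bar P$ (at least on the support of $Q$), which gives the KL inequality. The total mass works out because $m_\pm\le\rho$. Verifying that this pointwise max/min construction yields a measure dominating $\bar P|_\RR$ is the step I expect to be hardest. I would first try the case where $F_{P_0}$ is continuous and then handle atoms by right-limits.

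\textbf{Step 4 (conclusion and the remaining claims).} From $\Phi_\Pcal(Q)\le\KL(Q\|P^\circ)\le L$ we get weak lower semicontinuity. Positivity follows from Lemma~\ref{lem:ipm-positive}, since the half-line indicators are bounded by $1$. The case $\rho=0$ is Lemma~\ref{lem:kl-lsc} with the second argument fixed at $P_0$.
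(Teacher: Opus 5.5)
\textbf{Verdict: genuine gap in Step 3.} Your overall architecture matches the paper's: compactify to $[-\infty,\infty]$, use joint lower semicontinuity of $\KL$, inherit the band $|\bar F-F_{P_0}|\le\rho$, recycle the boundary mass, and get positivity from Lemma~\ref{lem:ipm-positive}. Steps 1, 2 and 4 are fine, and your continuity-point argument in Step 2 is a valid substitute for the paper's closed/open-set portmanteau bounds. The gap is Step 3, the only KS-specific step. You never produce $P^\circ$, and you abandon your first construction because of an obstruction that does not exist.

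\textbf{Your first construction already works.} Take $P_M:=\bar P|_{\RR}+m_-\delta_{-M}+m_+\delta_M$. Then $F_{P_M}(x)=\bar F(x)-m_-\one\{x<-M\}+m_+\one\{x\ge M\}$.
\begin{itemize}
\item On $[-M,M)$ the new CDF is exactly $\bar F$, which your Step 2 already places in the band. In particular $F_{P_M}(-M)=\bar F(-M)\le F_{P_0}(-M)+\rho$, so the jump at $-M$ cannot push the CDF above $F_{P_0}+\rho$.
\item For $x<-M$ the CDF is $\bar F-m_-\le\bar F$, so the upper bound persists. The lower bound is vacuous once $F_{P_0}(-M)\le\rho$, since then $F_{P_0}(x)-\rho\le 0$.
\item For $x\ge M$ the CDF is $\bar F+m_+\ge\bar F$, so the lower bound persists. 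The upper bound is vacuous once $1-F_{P_0}(M)\le\rho$.
\end{itemize}
Because $\rho>0$, such an $M$ exists, so $P_M$ lies in the KS ball. Also $P_M\ge\bar P|_{\RR}$, which gives $\KL(Q\|P_M)\le\KL(Q\|\bar P)\le L$.

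The paper records only $d_{\KS}(P_M,P_0)\le\rho+o(1)$. It then mixes, $P_M^\circ=(1-\alpha_\eta)P_M+\alpha_\eta P_0$ with $\alpha_\eta=\eta/(\rho+\eta)$, at a $\KL$ cost of $-\log(1-\alpha_\eta)\to0$. The direct check above shows the mixing is not needed, but either route closes the step.

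\textbf{Drop the fallback constructions.} Choosing $M$ to avoid atoms of $Q$ is irrelevant, because the $\KL$ inequality comes from domination alone. The max/min CDF is not a distribution function on $\RR$ as written:
\begin{itemize}
\item With your definition $\bar F(x)=\bar P([-\infty,x])$, Step 2 gives $\bar F\ge F_{P_0}-\rho$. So $\max\{\bar F,F_{P_0}-\rho\}=\bar F$, which tends to $m_-$ rather than $0$ at $-\infty$.
\item $\min\{\bar F+m_-,F_{P_0}+\rho\}$ counts $m_-$ twice and tends to $1-m_++m_-$, not $1$, at $+\infty$.
\end{itemize}
As submitted, the proposal leaves the decisive recycling step open. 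The missing argument is simply checking your own first construction against the band you established in Step 2.
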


\section{Conclusion}\label{sec:conc}
We have given a general sufficient condition for power-one sequential testing of an i.i.d. law on a Polish sample space against a composite null.  If $\Phi_\Pcal(Q)>0$ and $\Phi_\Pcal$ is weakly lower semicontinuous at each alternative $Q$, then a single level-$\alpha$ stopping rule has power one against the full alternative class.  Weak compactness of $\Pcal$ is the cleanest sufficient condition and yields testing against the entire complement $\Pcal^c$ without domination, convexity, parametric structure, or uniform separation.

The paper also separates two notions that are easily conflated.  Weak lower semicontinuity is a sufficient weak-topological route to power one and to REGROW optimality, but it is not necessary for consistency.  Discontinuous events can yield power-one tests even when weak neighborhoods fail.

For growth-rate optimality, the correct pointwise benchmark is the bipolar inf-KL rate $\dinfKL(Q;\Pcal)$, not the raw KL-infimum in general.  Under the weak-lower-semicontinuity assumptions emphasized in this paper, these two rates coincide, and the REGROW construction achieves the true optimal rate.  Outside these assumptions, future work should identify tractable conditions that allow one betting strategy to approximate the bipolar inf-KL rate simultaneously over a composite alternative.

The examples show that the lower-semicontinuity condition is broad: it includes weakly compact nonparametric nulls, divergence balls, IPM and Wasserstein balls under suitable hypotheses, and several noncompact semiparametric nulls with unrestricted nuisance components.  A useful direction for applications is to turn the existential Sanov-neighborhood constructions into computationally explicit betting strategies, for example by using predictable plug-in witnesses and convex dual calibrations for divergence, IPM, and Wasserstein constraints.

\printbibliography
\appendix

\section{Omitted proofs from Section~\ref{sec:examples}}

\subsection{Wasserstein ball}

\begin{proof}[Proof of Proposition~\ref{prop:wasserstein-lsc}]
Fix \(x_0\in\X\), and put \(m_0:=\int d(x,x_0)^p\,\dd P_0(x)<\infty\).  If \(P\in\Pcal_{\Wass_p}(\rho)\), then by the triangle inequality for \(\Wass_p\),
\[
  \left(\int d(x,x_0)^p\,\dd P(x)\right)^{1/p}
  =\Wass_p(P,\delta_{x_0})
  \le \Wass_p(P,P_0)+\Wass_p(P_0,\delta_{x_0})
  \le \rho+m_0^{1/p}.
\]
Hence the ball has uniformly bounded \(p\)-th moments.  Since \((\X,d)\) is proper, the closed metric ball \(\overline B(x_0,R)\) is compact, and Markov's inequality gives
\[
  \sup_{P\in\Pcal_{\Wass_p}(\rho)}P(\overline B(x_0,R)^c)
  \le \frac{(\rho+m_0^{1/p})^p}{R^p}
  \to0.
\]
Thus \(\Pcal_{\Wass_p}(\rho)\) is uniformly tight.
It remains to check weak closedness.  Let \(P_n\Rightarrow P\) and \(\Wass_p(P_n,P_0)\le\rho\).  Choose couplings \(\pi_n\in\Gamma(P_n,P_0)\) with
\[
  \int d(x,y)^p\,\dd\pi_n(x,y)\le \rho^p+1/n.
\]
The family \((\pi_n)\) is tight because its marginals are tight.  Passing to a subsequence, \(\pi_n\Rightarrow\pi\), and the marginal maps show that \(\pi\in\Gamma(P,P_0)\).  Since \((x,y)\mapsto d(x,y)^p\) is lower semicontinuous,
\[
  \Wass_p(P,P_0)^p
  \le \int d(x,y)^p\,\dd\pi(x,y)
  \le \liminf_n\int d(x,y)^p\,\dd\pi_n(x,y)
  \le \rho^p.
\]
Therefore the Wasserstein ball is weakly closed.  Uniform tightness plus weak closedness gives weak compactness by Prokhorov's theorem, implying lower semicontinuity of \(\Phi\).

For positivity, suppose \(Q\notin\Pcal_{\Wass_p}(\rho)\) but \(\Phi(Q)=0\).  Then there are \(P_m\in\Pcal_{\Wass_p}(\rho)\) with \(\KL(Q\|P_m)\to0\).  Pinsker's inequality gives \(d_{\TV}(Q,P_m)\to0\), hence \(P_m\Rightarrow Q\).  Since the ball is weakly closed, \(Q\in\Pcal_{\Wass_p}(\rho)\), a contradiction.
\end{proof}

\subsection{f-divergence balls}

\begin{proof}[Proof of Lemma~\ref{lem:f-lsc}]
The standard variational formula for Csiszar divergences gives
\[
  D_f(P\|P_0)
  =\sup_{g\in C_b(\X)}\left\{\int gd P-\int f^*(g)d P_0\right\},
\]
where \(f^*(u)=\sup_{t\ge0}\{ut-f(t)\}\) is the convex conjugate, and the supremum may be restricted to bounded continuous \(g\) for Polish spaces.  For fixed \(g\), \(\int gd P\) is weakly continuous and the second term is fixed.  Hence the supremum is weakly lower semicontinuous.
\end{proof}

\begin{proof}[Proof of Lemma~\ref{lem:fball-positive}]
Suppose, to the contrary, that \(\Phi_{\Pcal}(Q)=0\).  Then there are \(P_m\in\Pcal\) such that \(\KL(Q\|P_m)\to0\).  Pinsker's inequality gives \(d_{\mathrm{TV}}(Q,P_m)\to0\).  Total-variation convergence gives \(L^1\) convergence of densities with respect to a common dominating measure; passing to an almost-surely convergent subsequence and applying Fatou's lemma to the closed convex integrand in \eqref{eq:fdiv} yields
\[
  D_f(Q\|P_0)\le\liminf_m D_f(P_m\|P_0)\le r.
\]
Thus \(Q\in\Pcal\), a contradiction.
\end{proof}

\paragraph{Compactification convention for Polish spaces.}
The compactification used in the next lemma adds no assumption beyond Polishness of \(\X\).  Indeed, every Polish space admits a homeomorphic embedding \(\iota:\X\to[0,1]^{\mathbb N}\); if \(K\) is the closure of \(\iota(\X)\), then \(K\) is compact metric and \(\iota(\X)\) is Borel in \(K\).  After identifying \(\X\) with its image in \(K\), weak convergence on \(\X\) implies weak convergence of the corresponding pushforward measures on \(K\), because every bounded continuous function on \(K\) restricts to a bounded continuous function on \(\X\).  Thus, whenever this section assumes that \(\X\) is Polish, we may choose such a compact metric \(K\) without adding a separate topological hypothesis.

\begin{lemma}[Recycling escaped mass]\label{lem:recycle}
Let \(\X\) be embedded as a Borel subset of a compact metric space \(K\) as in the preceding compactification convention, and extend \(P_0\) to \(K\) by assigning zero mass to \(K\setminus\X\).  If \(\bar P\in\M(K)\), then there exists \(P^\circ\in\M(\X)\) such that
\[
  D_f(P^\circ\|P_0)\le D_f(\bar P\|P_0)
  \quad\text{and}\quad
  \KL(Q\|P^\circ)\le\KL(Q\|\bar P)
\]
for every \(Q\in\M(\X)\), where \(Q\) is also viewed as a measure on \(K\).
\end{lemma}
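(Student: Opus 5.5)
The natural construction is to move every piece of mass that $\bar P$ places on the boundary $K\setminus\X$ back into $\X$ in a way that is harmless for both divergences. Write $\bar P=\bar P|_{\X}+\bar P|_{K\setminus\X}$, and let $m:=\bar P(K\setminus\X)$. If $m=0$, take $P^\circ:=\bar P|_{\X}$, viewed as a measure on $\X$; both inequalities then hold with equality. If $m>0$, we must decide where to put the mass $m$. We cannot put it anywhere singular to $P_0$ without possibly paying $f'_\infty m$, although $\bar P$ already pays exactly that: $P_0(K\setminus\X)=0$, so $\bar P|_{K\setminus\X}\perp P_0$ and this part contributes $f'_\infty m$ to $D_f(\bar P\|P_0)$. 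We therefore keep it singular relative to $P_0$ but place it inside $\X$.

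\textbf{Construction.} Let $\bar P=hP_0+\bar P^\perp$ be the Lebesgue decomposition on $K$. The singular part is $\bar P^\perp=\bar P^\perp|_{\X}+\bar P|_{K\setminus\X}$. Define
\[
P^\circ:=\bar P|_{\X}+m\,\nu,
\]
where $\nu\in\M(\X)$ is to be chosen. For the $f$-divergence, take $\nu$ either singular to $P_0$ or proportional to $P_0$. In the singular case, the absolutely continuous part of $P^\circ$ is still $hP_0$, and the singular mass is still $1-\int h\,dP_0$, so $D_f(P^\circ\|P_0)=D_f(\bar P\|P_0)$ exactly. For the KL inequality, note that $Q(K\setminus\X)=0$. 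If $\KL(Q\|\bar P)<\infty$, then $Q\ll\bar P$ and $dQ/d\bar P$ vanishes on $K\setminus\X$. Since $P^\circ\ge\bar P|_{\X}$ as measures on $\X$, we get $Q\ll P^\circ$ and $dQ/dP^\circ\le dQ/d\bar P$ $Q$-a.s. Integrating the logarithm against $Q$ gives $\KL(Q\|P^\circ)\le\KL(Q\|\bar P)$, and this holds for every $Q$ regardless of the choice of $\nu$. The case $\KL(Q\|\bar P)=\infty$ is trivial.

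\textbf{Choice of $\nu$.} This is the step I expect to be the main obstacle. We need a probability $\nu$ on $\X$ that keeps $D_f$ from increasing. A convenient option is a Dirac mass $\delta_x$ at a point $x$ with $P_0(\{x\})=0$, which is singular to $P_0$. Such a point exists unless $P_0$ is purely atomic with full support on a countable $\X$. In that degenerate case, every singular measure on $\X$ must live off the atoms, so I would instead take $\nu:=P_0$. The density then becomes $h+m$, and I would try convexity. Writing $1=\int h\,dP_0+s$ with $s=\bar P^\perp(K)\ge m$, the singular mass $m$ contributes $f'_\infty m$. Moving it into the density changes $\int f(h)\,dP_0$ to $\int f(h+m)\,dP_0$. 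The incremental-slope bound for convex $f$ gives $f(h+m)-f(h)\le f'_\infty m$ pointwise, so the divergence does not increase. This bound works for any $\nu=P_0$, which suggests simply always using $\nu:=P_0$ and avoiding the atom issue altogether. My plan is therefore to use $\nu=P_0$ uniformly, verify $f(t+m)-f(t)\le m f'_\infty$ from convexity when $f'_\infty<\infty$, and note that the inequality is trivial when $f'_\infty=\infty$.
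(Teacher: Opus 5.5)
Your proof is correct and follows the same structure as the paper's. You recycle the boundary mass $m$ into $\X$, get the KL inequality from $P^\circ\ge\bar P|_{\X}$, and control $D_f$ with the convexity slope bound $f(u+v)-f(u)\le v f'_\infty$.

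The only real difference is where the mass goes. The paper puts all of $m$ at a single point $x_0$ and splits into two cases. If $P_0\{x_0\}=0$, the singular mass is unchanged. If $a:=P_0\{x_0\}>0$, the density is raised by $m/a$ at that one point. Your uniform choice $\nu=P_0$ instead raises the density by $m$ everywhere, which removes that case split; the paper's version keeps the change local and gives exact equality in the non-atom case.

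In either version, handle $f'_\infty=\infty$ by noting that $D_f(\bar P\|P_0)\ge f'_\infty m=\infty$ whenever $m>0$, rather than through the pointwise bound. This avoids an $\infty-\infty$ subtraction.
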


\begin{proof}
Let \(m:=\bar P(K\setminus\X)\).  Fix any \(x_0\in\X\), and define
\[
  P^\circ(A):=\bar P(A)+m\one\{x_0\in A\},\qquad A\in\B(\X).
\]
Thus the mass that escaped to \(K\setminus\X\) is moved back to \(x_0\).

First consider the divergence.  If \(f'_\infty=\infty\) and \(D_f(\bar P\|P_0)<\infty\), then \(m=0\), so there is nothing to prove.  If \(f'_\infty<\infty\), write \(\bar P=hP_0+S\), where \(S\perp P_0\) on \(K\).  The escaped mass is part of the singular mass \(S(K)\).  If \(P_0\{x_0\}=0\), moving \(m\) to \(x_0\) preserves the total singular mass and leaves the absolutely continuous density unchanged.  If \(a:=P_0\{x_0\}>0\), the move removes singular mass \(m\) and increases the density at \(x_0\) by \(m/a\).  Convexity implies
\[
  f(u+v)-f(u)\le v f'_\infty,
  \qquad u,v\ge0,
\]
with the convention that the inequality is trivial when the right side is infinite.  Therefore
\[
  a f\!\left(h(x_0)+\frac{m}{a}\right)-a f(h(x_0))
  \le m f'_\infty,
\]
which is no larger than the singular divergence cost that was removed.  Hence \(D_f(P^\circ\|P_0)\le D_f(\bar P\|P_0)\).

For the KL inequality, the restriction of \(\bar P\) to \(\X\) is dominated by \(P^\circ\).  If \(Q\not\ll\bar P\), then \(\KL(Q\|\bar P)=\infty\) and there is nothing to show.  Otherwise, with \(c:=\dd\bar P|_{\X}/\dd P^\circ\le1\),
\[
  \log\frac{\dd Q}{\dd P^\circ}
  =\log\frac{\dd Q}{\dd\bar P|_{\X}}+\log c
  \le \log\frac{\dd Q}{\dd\bar P|_{\X}}
  \qquad Q\text{-a.s.}
\]
Integrating gives \(\KL(Q\|P^\circ)\le\KL(Q\|\bar P)\).
\end{proof}

\subsection{IPM-ball null}\label{subsec:ipm}

\begin{proof}[Proof of Lemma~\ref{lem:ipm-positive}]
If \(\Phi_{\Pcal_{\Fcal}(\rho)}(Q)=0\), then there are \(P_m\in\Pcal_{\Fcal}(\rho)\) with \(\KL(Q\|P_m)\to0\).  Pinsker's inequality gives \(d_{\TV}(Q,P_m)\to0\).  If \(M:=\sup_{f\in\Fcal}\|f\|_\infty\), then
\[
  \gamma_{\Fcal}(Q,P_m)
  \le 2M d_{\TV}(Q,P_m)\to0.
\]
Hence
\[
  \gamma_{\Fcal}(Q,P_0)
  \le \gamma_{\Fcal}(Q,P_m)+\gamma_{\Fcal}(P_m,P_0)
  \le o(1)+\rho.
\]
Letting \(m\to\infty\) gives \(Q\in\Pcal_{\Fcal}(\rho)\), a contradiction.
\end{proof}

Many common IPMs are bounded.  On noncompact sample spaces, positive-radius bounded-IPM balls are often not tight: a small fixed amount of mass can be sent to infinity while keeping the IPM distance bounded.  Weak compactness is then too strong.  The next condition captures the weaker fact needed for lower semicontinuity: any mass that appears on a compactification boundary can be moved back into \(\X\) with arbitrarily small increase in the IPM, and without decreasing the denominator seen by \(Q\).

Recall that for Polish \(\X\), there exists a compact metric space \(K\) containing \(\X\) as a Borel subset, such that the inclusion \(\iota:\X\to K\) is a homeomorphic embedding and weak convergence on \(\X\) implies weak convergence of pushforwards on \(K\).  

\begin{assumption}[IPM compactification and recycling]\label{ass:ipm-recycle}
Fix \(\rho>0\). Assume that each \(f\in\Fcal\) has a bounded continuous extension \(\bar f\in C(K)\).  Moreover, whenever \(\bar P\in\M(K)\) satisfies
\begin{equation}\label{eq:extended-ipm-ball}
  \sup_{f\in\Fcal}\left|\int_K \bar f\dd\bar P-\int_\X f\dd P_0\right|\le \rho,
\end{equation}
then for every \(\eta>0\) there exists \(P_\eta\in\M(\X)\) such that
\begin{enumerate}
\item \(P_\eta(A)\ge \bar P(\iota(A))\) for every Borel \(A\subseteq\X\);
\item \(\gamma_{\Fcal}(P_\eta,P_0)\le \rho+\eta\).
\end{enumerate}
\end{assumption}
\begin{theorem}[Weak lower semicontinuity from recycling]\label{thm:ipm-recycle-lsc}
Under Assumption~\ref{ass:ipm-recycle}, \(Q\mapsto\Phi_{\Pcal_{\Fcal}(\rho)}(Q)\) is weakly lower semicontinuous on \(\M(\X)\). 
\end{theorem}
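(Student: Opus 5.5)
We follow the template of the proof of Theorem~\ref{thm:fball-lsc}: compactify, extract a limit of near-optimal null denominators, and then recycle the escaped mass with the help of Assumption~\ref{ass:ipm-recycle}.

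Let \(Q_n\Rightarrow Q\) in \(\M(\X)\). If \(L:=\liminf_n\Phi_{\Pcal_{\Fcal}(\rho)}(Q_n)=\infty\) there is nothing to show. Otherwise pass to a subsequence along which \(\Phi(Q_n)\to L\), and choose \(P_n\in\Pcal_{\Fcal}(\rho)\) with \(\KL(Q_n\|P_n)\le\Phi(Q_n)+1/n\). Embed \(\X\) in the compact metric space \(K\) and view all measures as measures on \(K\). By compactness of \(\M(K)\), a further subsequence gives \(P_n\Rightarrow\bar P\in\M(K)\), and we still have \(Q_n\Rightarrow Q\) on \(K\). Lemma~\ref{lem:kl-lsc}, applied on the Polish space \(K\), then yields
\[
\KL(Q\|\bar P)\le\liminf_n\KL(Q_n\|P_n)=L.
\]

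Next we check that \(\bar P\) satisfies the extended constraint \eqref{eq:extended-ipm-ball}. For each \(f\in\Fcal\), the extension \(\bar f\) lies in \(C(K)\), and \(\int_K\bar f\,\dd P_n=\int_\X f\,\dd P_n\) because \(P_n\) lives on \(\X\). Hence
\[
\int_K\bar f\,\dd\bar P=\lim_n\int_\X f\,\dd P_n,
\]
and since \(\bigl|\int f\,\dd P_n-\int f\,\dd P_0\bigr|\le\rho\) for every \(n\), the limit satisfies the same bound. Taking the supremum over \(f\in\Fcal\) gives \eqref{eq:extended-ipm-ball}.

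Now fix \(\eta>0\) and take \(P_\eta\) from Assumption~\ref{ass:ipm-recycle}. The KL comparison runs as in Lemma~\ref{lem:recycle}. If \(Q\not\ll\bar P\) there is nothing to prove. Otherwise, since \(Q\) lives on \(\X\), we have \(Q\ll\bar P|_\X\le P_\eta\), and so
\[
\frac{\dd Q}{\dd P_\eta}=\frac{\dd Q}{\dd\bar P|_\X}\cdot\frac{\dd\bar P|_\X}{\dd P_\eta},
\]
where the second factor is at most \(1\). Integrating the logarithm gives \(\KL(Q\|P_\eta)\le\KL(Q\|\bar P)\le L\). At this point we know only \(P_\eta\in\Pcal_{\Fcal}(\rho+\eta)\), not \(P_\eta\in\Pcal_{\Fcal}(\rho)\).

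\textbf{Main obstacle: removing the slack \(\eta\).} We intend to use convexity together with \(\rho>0\). Set \(\lambda:=\eta/(\rho+\eta)\) and
\[
P':=(1-\lambda)P_\eta+\lambda P_0 .
\]
Since \(\gamma_{\Fcal}(\cdot,P_0)\) is convex, \(\gamma_{\Fcal}(P',P_0)\le(1-\lambda)(\rho+\eta)=\rho\), so \(P'\in\Pcal_{\Fcal}(\rho)\). Moreover \(P'\ge(1-\lambda)P_\eta\), so the same density-ratio argument gives
\[
\KL(Q\|P')\le\KL(Q\|P_\eta)-\log(1-\lambda)\le L-\log(1-\lambda).
\]
Hence \(\Phi(Q)\le L-\log(1-\lambda)\). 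Letting \(\eta\downarrow0\) forces \(\lambda\to0\), which gives \(\Phi(Q)\le L\), i.e.\ weak lower semicontinuity. This mixing step is exactly where the hypothesis \(\rho>0\) is needed.
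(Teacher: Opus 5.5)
Your proposal is correct and follows the paper's proof essentially step for step. You compactify, extract $\bar P$, inherit the extended IPM constraint from continuity of the extensions $\bar f$, and recycle via Assumption~\ref{ass:ipm-recycle} with domination giving the KL comparison. You then remove the $\eta$-slack by mixing with $P_0$ at weight $\eta/(\rho+\eta)$; the only cosmetic difference is that the paper records $\gamma_{\Fcal}(P_\eta^\circ,P_0)=(1-\alpha_\eta)\gamma_{\Fcal}(P_\eta,P_0)$ as an exact identity, where you use convexity.
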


\begin{proof}
Let \(Q_n\Rightarrow Q\).  As usual, pass to a subsequence attaining the liminf and choose \(P_n\in\Pcal_{\Fcal}(\rho)\) with
\[
  \KL(Q_n\|P_n)\le\Phi_{\Pcal_{\Fcal}(\rho)}(Q_n)+1/n.
\]
View all measures as measures on \(K\).  By compactness of \(K\), a further subsequence satisfies \(P_n\Rightarrow\bar P\in\M(K)\).  Also \(Q_n\Rightarrow Q\) on \(K\).  Since each \(\bar f\) is continuous, \(\bar P\) satisfies \eqref{eq:extended-ipm-ball}.  Joint lower semicontinuity of KL on \(K\) gives
\[
  \KL(Q\|\bar P)\le \liminf_n \KL(Q_n\|P_n),
\]
where \(Q\) is supported on \(\iota(\X)\).

Fix \(\eta>0\), and let \(P_\eta\) be the recycled measure from Lemma~\ref{ass:ipm-recycle}.  Because \(P_\eta\) dominates the restriction of \(\bar P\) to \(\iota(\X)\),
\begin{equation}\label{eq:recycle-kl-ipm}
  \KL(Q\|P_\eta)\le \KL(Q\|\bar P).
\end{equation}
It may only satisfy the enlarged constraint \(\rho+\eta\).  Since \(P_0\) is the center of the ball, define
\[
  \alpha_\eta:=\frac{\eta}{\rho+\eta},
  \qquad
  P_\eta^\circ:=(1-\alpha_\eta)P_\eta+\alpha_\eta P_0.
\]
Then
\[
  \gamma_{\Fcal}(P_\eta^\circ,P_0)
  =(1-\alpha_\eta)\gamma_{\Fcal}(P_\eta,P_0)
  \le (1-\alpha_\eta)(\rho+\eta)=\rho,
\]
so \(P_\eta^\circ\in\Pcal_{\Fcal}(\rho)\).  Also \(P_\eta^\circ\ge(1-\alpha_\eta)P_\eta\), and hence
\[
  \KL(Q\|P_\eta^\circ)
  \le \KL(Q\|P_\eta)-\log(1-\alpha_\eta)
  \le \KL(Q\|\bar P)-\log(1-\alpha_\eta).
\]
Therefore
\[
  \Phi_{\Pcal_{\Fcal}(\rho)}(Q)
  \le \KL(Q\|\bar P)-\log(1-\alpha_\eta)
  \le \liminf_n\Phi_{\Pcal_{\Fcal}(\rho)}(Q_n)-\log(1-\alpha_\eta).
\]
Let \(\eta\downarrow0\).  Since \(\rho>0\), \(\alpha_\eta\downarrow0\), which proves lower semicontinuity.  
\end{proof}

The abstract recycling condition is checkable.  The following lemma clarifies a compactification point.  Polishness alone gives some compact metric embedding of \(\X\), but it does not by itself guarantee that a prescribed IPM class extends continuously to that compactification.  The prescribed class must be compatible with the original topology and small enough to be generated by countably many coordinates if a compact \emph{metric} compactification is desired.

\begin{lemma}[Compactification generated by a separable continuous class]\label[lemma]{lem:separable-compactification}
Let \(\X\) be Polish, and let \(\Fcal\subset C_b(\X)\) be uniformly bounded and separable in the uniform norm \(\|\cdot\|_\infty\).  Then there is a compact metric space \(K\) and a homeomorphic embedding \(\iota:\X\to K\) such that \(\iota(\X)\) is Borel and dense in \(K\), weak convergence on \(\X\) implies weak convergence of the pushforward measures on \(K\), and every \(f\in\Fcal\) has a bounded continuous extension \(\bar f\in C(K)\).
\end{lemma}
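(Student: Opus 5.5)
The plan is to build $K$ as the closure of the image of an explicit embedding into a Hilbert cube. The embedding will use two families of coordinates: a countable family that already embeds $\X$ homeomorphically, and a countable sup-norm dense subfamily of $\Fcal$.

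\textbf{Step 1: the coordinates.} Since $\X$ is Polish, it is homeomorphic to a subspace of $[0,1]^{\mathbb N}$ (Urysohn). Fix such an embedding $\iota_0=(g_1,g_2,\dots)$ with each $g_i\in C(\X,[0,1])$. Since $\Fcal$ is separable in $\|\cdot\|_\infty$, pick a countable set $\{f_1,f_2,\dots\}\subseteq\Fcal$ that is sup-norm dense in $\Fcal$. Let $M:=\sup_{f\in\Fcal}\|f\|_\infty<\infty$ and rescale to $h_i:=(f_i+M)/(2M)\in C(\X,[0,1])$.

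\textbf{Step 2: the embedding.} Define
\[
\iota(x):=\bigl(g_1(x),h_1(x),g_2(x),h_2(x),\dots\bigr)\in[0,1]^{\mathbb N},
\]
and let $K:=\overline{\iota(\X)}$. Then $K$ is compact metric, and $\iota(\X)$ is dense in $K$ by construction. Three facts follow.
\begin{itemize}
\item The map $\iota$ is continuous and injective.
\item Its inverse on $\iota(\X)$ is continuous, because it factors through the projection onto the $g$-coordinates followed by $\iota_0^{-1}$. So $\iota$ is a homeomorphic embedding.
\item $\iota(\X)$ is Borel in $K$. It is a Polish subspace of a metric space and hence a $G_\delta$ (Alexandrov/Kuratowski). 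Alternatively, it is the injective continuous image of a Polish space (Lusin–Souslin).
\end{itemize}

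\textbf{Step 3: weak convergence transfers.} Every $\phi\in C(K)$ satisfies $\phi\circ\iota\in C_b(\X)$. Hence $P_n\Rightarrow P$ on $\X$ gives
\[
\int\phi\,d(\iota_\#P_n)=\int\phi\circ\iota\,dP_n\to\int\phi\circ\iota\,dP=\int\phi\,d(\iota_\#P),
\]
which is weak convergence of the pushforwards on $K$.

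\textbf{Step 4: extending the functions.} This is the main point. Each $f_i$ extends to $K$: its extension is $2M\pi_{2i}-M$, where $\pi_{2i}$ is the corresponding coordinate projection, and this is continuous on $K$. For a general $f\in\Fcal$, choose $f_{i_m}$ with $\|f-f_{i_m}\|_\infty\to0$. On the dense set $\iota(\X)$ the extensions $\bar f_{i_m}$ are uniformly Cauchy. By continuity and density, $\sup_K|\bar f_{i_m}-\bar f_{i_l}|=\sup_{\X}|f_{i_m}-f_{i_l}|$, so they are uniformly Cauchy on all of $K$. They therefore converge uniformly to some $\bar f\in C(K)$ with $\bar f\circ\iota=f$. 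The extension is unique by density, and $\|\bar f\|_\infty\le M$.

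The only delicate point is this uniform-limit extension. It is exactly where sup-norm separability is used: a merely pointwise-dense subfamily would not transfer continuity to the closure.
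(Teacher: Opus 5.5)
Your proof is correct and follows the paper's argument essentially step for step: you embed $\X$ into a Hilbert cube using a countable embedding family $(g_j)$ augmented by a sup-norm dense sequence $(f_m)\subseteq\Fcal$, take $K$ to be the closure of the image, and extend a general $f\in\Fcal$ as the uniform limit of the coordinate extensions $\bar f_m$, using $\sup_K|\bar f_m-\bar f_l|=\sup_{\X}|f_m-f_l|$ from density. The differences are cosmetic: you interleave the coordinates, and you give an explicit $G_\delta$ or Lusin--Souslin justification of Borelness, which the paper only asserts. One small fix: your rescaling $(f_i+M)/(2M)$ should use $\max(M,1)$ in place of $M$, to cover the degenerate case $\Fcal\subseteq\{0\}$.
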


\begin{proof}
Because \(\X\) is Polish, there is a countable family \((g_j)_{j\ge1}\subset C_b(\X,[0,1])\) whose evaluation map is a homeomorphic embedding of \(\X\) into the Hilbert cube.  Let \((f_m)_{m\ge1}\) be uniformly dense in \(\Fcal\).  After a harmless affine rescaling, regard the \(f_m\)'s as taking values in a fixed compact interval \([-M,M]\).  Define
\[
  T(x):=\bigl(g_1(x),g_2(x),\ldots; f_1(x),f_2(x),\ldots\bigr)
\]
with values in \([0,1]^{\mathbb N}\times[-M,M]^{\mathbb N}\), and let \(K\) be the closure of \(T(\X)\).  This is compact metric, and \(T\) is a homeomorphic embedding because the \(g_j\)'s already generate the original topology.  A homeomorphic copy of a Polish space inside a metric space is Borel.  The coordinate projections on \(K\) give continuous extensions of the functions \(g_j\) and \(f_m\).

Now fix \(f\in\Fcal\), and choose \(f_{m_
u}\to f\) uniformly on \(\X\).  Let \(\bar f_{m_
u}\) be the coordinate extensions on \(K\).  Since \(T(\X)\) is dense in \(K\), for any two indices \(m,n\),
\[
  \|\bar f_m-\bar f_n\|_{\infty,K}
  =\|f_m-f_n\|_{\infty,\X}.
\]
Thus \((\bar f_{m_
u})\) is uniformly Cauchy on \(K\), and its uniform limit is a continuous extension of \(f\).  Finally, if \(P_n\Rightarrow P\) weakly on \(\X\), then \(T_\#P_n\Rightarrow T_\#P\) weakly on \(K\), because every function in \(C(K)\) restricts to a bounded continuous function on \(\X\).
\end{proof}

\begin{remark}[What is not automatic]\label[remark]{rem:compactification-not-automatic}
The conclusion of \Cref{lem:separable-compactification} is not true for arbitrary IPM classes.  If \(\Fcal\) contains functions that are not continuous for the original topology, then no compactification preserving the original topology can make them continuous: the restriction of a continuous extension would already be continuous on \(\X\).  This is why Kolmogorov--Smirnov indicators require a separate argument.  If \(\Fcal\) contains unbounded functions, then they cannot extend as finite-valued continuous functions to a compact space; this is why Wasserstein balls are handled by tightness and moment arguments rather than by this bounded-IPM compactification.  Finally, if \(\Fcal\subset C_b(\X)\) is very large and nonseparable in \(\|\cdot\|_\infty\), then a compactification extending all functions still exists in the Stone--Cech sense, but it need not be metrizable.  The metric compactification used in the proof  is justified by uniform-norm total boundedness, which implies uniform-norm separability.
\end{remark}

\begin{proof}[Proof of Proposition~\ref{prop:precompact-F}]
Since total boundedness implies separability in \(\|\cdot\|_\infty\), \Cref{lem:separable-compactification} gives a compact metric compactification \(K\) to which all functions in \(\Fcal\) extend continuously, while retaining the original topology of \(\X\). Let \(\bar P\in\M(K)\) satisfy \eqref{eq:extended-ipm-ball}.  Write \(\bar P_X\) for the restriction of \(\bar P\) to \(\X\), and let \(m:=1-\bar P_X(\X)\) be the boundary mass.  If \(m=0\), set \(P_\eta=\bar P_X\).  If \(m>0\), let \(B\) be the normalized boundary restriction of \(\bar P\).  Since \(\X\) is dense in \(K\), there are probabilities \(S_j\) on \(\X\) with \(S_j\Rightarrow B\) weakly on \(K\).  Total boundedness of \(\Fcal\) in the sup norm implies
\[
  \sup_{f\in\Fcal}\left|\int f\dd S_j-\int_K \bar f\dd B\right|\to0.
\]
For sufficiently large \(j\), define \(P_\eta:=\bar P_X+mS_j\).  Then \(P_\eta\) dominates \(\bar P_X\) and \(\gamma_{\Fcal}(P_\eta,P_0)\le \rho+\eta\).  Thus Assumption~\ref{ass:ipm-recycle} holds, and Theorem~\ref{thm:ipm-recycle-lsc} and Lemma~\ref{lem:ipm-positive} prove the claim.
\end{proof}

\begin{lemma}[Weak extension of a bounded Hilbert feature map]\label[lemma]{lem:hilbert-feature-extension}
Let \(\mathcal H\) be a separable Hilbert space and let \(\varphi:\X\to\mathcal H\) be bounded.  Suppose \(K\) is a compact metric space containing \(\X\) as a dense Borel subspace and, for every \(h\in\mathcal H\), the scalar function
\[
  f_h(x):=\langle \varphi(x),h\rangle_{\mathcal H},\qquad x\in\X,
\]
has a continuous extension \(\bar f_h\in C(K)\).  Then there is a unique map \(\bar\varphi:K\to\mathcal H\) such that
\[
  \langle \bar\varphi(z),h\rangle_{\mathcal H}=\bar f_h(z)
  \qquad\text{for all }z\in K,
  h\in\mathcal H.
\]
Moreover \(\sup_{z\in K}\|\bar\varphi(z)\|_{\mathcal H}\le \sup_{x\in\X}\|\varphi(x)\|_{\mathcal H}\), \(\bar\varphi|_\X=\varphi\), and \(\bar\varphi:K\to\mathcal H\) is weakly continuous.  In particular, \(\bar\varphi\) is Borel measurable and Bochner integrable with respect to every probability measure on \(K\).
\end{lemma}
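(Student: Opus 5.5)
The plan is to build $\bar\varphi$ pointwise by Riesz representation, then read off the norm bound, the consistency with $\varphi$, weak continuity, and measurability in that order. Put $M:=\sup_{x\in\X}\|\varphi(x)\|_{\mathcal H}$.

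\emph{Construction of $\bar\varphi$.} Fix $z\in K$. The map $h\mapsto f_h$ is linear from $\mathcal H$ into functions on $\X$. By density of $\X$ in $K$, a continuous extension of a function on $\X$ is unique, so $h\mapsto\bar f_h$ is also linear: $\bar f_{ah+bh'}$ and $a\bar f_h+b\bar f_{h'}$ are continuous on $K$ and agree on $\X$. Next choose $x_\nu\in\X$ with $x_\nu\to z$ in $K$. Then
\[
|\bar f_h(z)|=\lim_\nu|\langle\varphi(x_\nu),h\rangle|\le M\|h\|.
\]
Hence $h\mapsto\bar f_h(z)$ is a bounded linear functional of norm at most $M$. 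The Riesz representation theorem gives a unique $\bar\varphi(z)\in\mathcal H$ with $\langle\bar\varphi(z),h\rangle=\bar f_h(z)$ and $\|\bar\varphi(z)\|\le M$. Uniqueness of $\bar\varphi$ as a map holds because a vector is determined by its inner products. For $x\in\X$ we have $\bar f_h(x)=f_h(x)=\langle\varphi(x),h\rangle$, so $\bar\varphi|_\X=\varphi$.

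\emph{Weak continuity.} For each fixed $h$, the scalar map $z\mapsto\langle\bar\varphi(z),h\rangle=\bar f_h(z)$ is continuous. This is exactly continuity of $\bar\varphi$ into $\mathcal H$ equipped with its weak topology.

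\emph{Measurability and Bochner integrability.} This step needs care. Weak continuity gives weak measurability, since each $z\mapsto\langle\bar\varphi(z),h\rangle$ is Borel. Because $\mathcal H$ is separable, Pettis's measurability theorem upgrades this to strong measurability. Equivalently, the Borel $\sigma$-field of a separable Hilbert space is generated by the functionals $\langle\cdot,h\rangle$ for $h$ in a countable dense set: every open ball is a countable intersection of half-spaces, and every open set is a countable union of balls. So $\bar\varphi$ is Borel measurable as a map into $\mathcal H$ with its norm topology. Finally, $\|\bar\varphi\|\le M$ is bounded and measurable, so $\int\|\bar\varphi\|\,d\mu\le M<\infty$ for every probability $\mu$ on $K$, which gives Bochner integrability.

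The only delicate point is this last step, passing from weak continuity to norm-Borel measurability. Separability of $\mathcal H$ is exactly what makes it work.
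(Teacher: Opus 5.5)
Your proposal is correct and follows the paper's proof essentially step for step. Linearity of $h\mapsto\bar f_h(z)$ comes from uniqueness of continuous extensions off the dense set $\X$, and the bound $M\|h\|$ comes from density (your sequence argument is equivalent to the paper's comparison of suprema over $K$ and $\X$); then Riesz representation, coordinatewise weak continuity, and measurability from separability of $\mathcal H$ finish it. One small slip in your alternative measurability argument: an open ball is not a countable intersection of half-spaces, but the closed ball $\{y:\|y-a\|\le r\}=\bigcap_{h\in D}\{y:\langle y-a,h\rangle\le r\}$ is, with $D$ a countable dense subset of the unit sphere, and open balls are countable unions of closed balls, so your conclusion that the norm Borel $\sigma$-field is generated by the functionals $\langle\cdot,h\rangle$ stands.
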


\begin{proof}
Fix \(z\in K\), and define \(L_z(h):=\bar f_h(z)\).  
Since \(\X\) is dense in \(K\), continuous extensions are unique.  Hence, for scalars \(a,b\) and vectors \(h,g\in\mathcal H\), the two continuous functions
\[
  \bar f_{ah+bg}
  \quad\text{and}\quad
  a\bar f_h+b\bar f_g
\]
agree on \(\X\), and therefore agree on all of \(K\).  Thus \(L_z\) is linear.  Also, if \(M:=\sup_x\|\varphi(x)\|_{\mathcal H}\), then
\[
  |L_z(h)|=|\bar f_h(z)|
  \le \sup_{y\in K}|\bar f_h(y)|
  = \sup_{x\in\X}|\langle\varphi(x),h\rangle_{\mathcal H}|
  \le M\|h\|_{\mathcal H}.
\]
The equality of the two suprema uses density of \(\X\) in \(K\) and continuity of \(\bar f_h\).  Hence \(L_z\) is a bounded linear functional on \(\mathcal H\).  By the Riesz representation theorem, there is a unique vector \(\bar\varphi(z)\in\mathcal H\) with \(L_z(h)=\langle\bar\varphi(z),h\rangle\) for all \(h\), and the displayed bound gives \(\|\bar\varphi(z)\|\le M\).  For \(z=x\in\X\), uniqueness gives \(\bar\varphi(x)=\varphi(x)\).

For each fixed \(h\in\mathcal H\), the coordinate map \(z\mapsto\langle\bar\varphi(z),h\rangle=\bar f_h(z)\) is continuous; this is precisely weak continuity.  Since \(\mathcal H\) is separable, weak Borel and norm Borel measurability coincide on bounded subsets, so \(\bar\varphi\) is strongly measurable.  Boundedness then implies Bochner integrability under every finite measure on \(K\).
\end{proof}

\begin{proof}[Proof of Theorem~\ref{prop:mmd-lsc}]
We verify the recycling condition in the Hilbert feature representation.  First, the RKHS unit ball is indeed a subset of \(C_b(\X)\).  Finite linear combinations of kernel sections are continuous because \(k\) is continuous, and if \(h_n\to h\) in \(\mathcal H\), then
\[
  \sup_{x\in\X}|h_n(x)-h(x)|
  =\sup_{x\in\X}|\langle\varphi(x),h_n-h\rangle_{\mathcal H}|
  \le M\|h_n-h\|_{\mathcal H}\to0.
\]
Thus every \(h\in\mathcal H\) is bounded and continuous, and \(x\mapsto\langle\varphi(x),h\rangle_{\mathcal H}=h(x)\) is continuous.  Because \(\mathcal H\) is separable and \(M<\infty\), the RKHS unit ball is separable as a subset of \(C_b(\X)\) in the uniform norm: if \(h_j\to h\) in \(\mathcal H\), then
\[
  \sup_x|h_j(x)-h(x)|
  =\sup_x|\langle\varphi(x),h_j-h\rangle_{\mathcal H}|
  \le M\|h_j-h\|_{\mathcal H}.
\]
Thus \Cref{lem:separable-compactification} can be applied to the RKHS unit ball.  By scaling, every scalar coordinate \(x\mapsto\langle\varphi(x),h\rangle\), \(h\in\mathcal H\), has a continuous extension to the resulting compactification.  Therefore \Cref{lem:hilbert-feature-extension} yields a bounded weakly continuous extension \(\bar\varphi:K\to\mathcal H\).

Let \(\bar P\in\M(K)\) satisfy \(\|\int\bar\varphi\dd\bar P-\mu_{P_0}\|_{\mathcal H}\le\rho\).  The integral is a Bochner integral, justified by \Cref{lem:hilbert-feature-extension}.  Write \(\bar P_X\) for the restriction to \(\X\), let \(m:=1-\bar P_X(\X)\), and define
\[
  v_B:=\int_{K\setminus\X}\bar\varphi\dd\bar P.
\]
If \(m=0\), there is no boundary mass to recycle.  If \(m>0\), then \(v_B/m\) lies in the weak closed convex hull of \(\varphi(\X)\).  Indeed, every \(z\in K\) is the limit of a sequence \(x_n\in\X\); weak continuity gives \(\bar\varphi(x_n)=\varphi(x_n)\to\bar\varphi(z)\) weakly.  Hence \(\bar\varphi(K)\) is contained in the weak closure of \(\varphi(\X)\), and barycenters of probability measures supported on this weak closure lie in its weak closed convex hull.  In a Hilbert space, the weak and norm closures of a convex set agree.  Hence, for every \(\eta>0\), there is a probability \(S_\eta\) supported on finitely many points of \(\X\) such that
\[
  \left\|m\mu_{S_\eta}-v_B\right\|_{\mathcal H}\le\eta.
\]
Set \(P_\eta:=\bar P_X+mS_\eta\).  Then \(P_\eta\) dominates \(\bar P_X\), and
\[
  \left\|\mu_{P_\eta}-\int_K\bar\varphi\dd\bar P\right\|_{\mathcal H}\le\eta.
\]
Therefore \(\MMD_k(P_\eta,P_0)\le\rho+\eta\).  The proof of \Cref{thm:ipm-recycle-lsc} now applies.  Strict positivity follows from \Cref{lem:ipm-positive}, since every RKHS unit-ball function is uniformly bounded by \(M\).
\end{proof}

\begin{proof}[Proof of Theorem~\ref{thm:ks-lsc}]
The zero-radius assertion is immediate.  Assume \(\rho>0\).  Let \(Q_n\Rightarrow Q\), pass to a liminf subsequence, and choose \(P_n\) in the KS ball with \(\KL(Q_n\|P_n)\le\Phi(Q_n)+1/n\).  Compactify \(\RR\) to \(K=[-\infty,\infty]\).  Since \(Q_n\Rightarrow Q\) on \(\RR\), the pushforwards also converge weakly on \(K\).  By compactness, a subsequence of \(P_n\) converges weakly on \(K\) to some \(\bar P\).  KL is weakly lower semicontinuous on \(K\), so
\[
  \KL(Q\|\bar P)\le\liminf_n\KL(Q_n\|P_n).
\]

We first record the inherited KS inequalities.  For finite \(x\), write \(\bar F(x):=\bar P([-\infty,x])\), where the interval includes the compactification point \(-\infty\).  Since \([-\infty,x]\) is closed in \(K\), Portmanteau gives
\[
  \bar F(x)\ge \limsup_n P_n((-
\infty,x])\ge F_{P_0}(x)-\rho.
\]
For \(\delta>0\), the set \([-\infty,x+\delta)\) is open in \(K\), so
\[
  \bar F(x)\le \bar P([-\infty,x+\delta))
  \le \liminf_n P_n((-
\infty,x+\delta))
  \le F_{P_0}(x+\delta)+\rho.
\]
Letting \(\delta\downarrow0\) and using right-continuity of \(F_{P_0}\) yields \(|\bar F(x)-F_{P_0}(x)|\le\rho\) for every finite \(x\).

Let \(a:=\bar P\{-\infty\}\) and \(b:=\bar P\{+\infty\}\).  For \(M>0\), recycle the boundary mass by defining the probability \(P_M\) on \(\RR\) as
\[
  P_M:=\bar P|_{\RR}+a\delta_{-M}+b\delta_M.
\]
This measure dominates \(\bar P|_{\RR}\), so \(\KL(Q\|P_M)\le\KL(Q\|\bar P)\).  We claim that \(d_{\KS}(P_M,P_0)\le\rho+o(1)\) as \(M\to\infty\).  Indeed, for \(-M\le x<M\), the CDF of \(P_M\) equals \(\bar F(x)\), and hence is within \(\rho\) of \(F_{P_0}(x)\).  For \(x<-M\), the CDF of \(P_M\) lies between \(0\) and \(F_{P_M}(-M-)\), while \(F_{P_0}(x)\le F_{P_0}(-M)\to0\).  For \(x\ge M\), the upper deviation is bounded by \(1-F_{P_0}(M)\to0\), and the lower deviation is no worse than that of \(\bar F(x)\), which is bounded by \(\rho\).  Hence the claim holds.

Choose \(M\) so large that \(d_{\KS}(P_M,P_0)\le\rho+\eta\).  As in the proof of \Cref{thm:ipm-recycle-lsc}, mix
\[
  P_M^\circ=(1-\alpha_\eta)P_M+\alpha_\eta P_0,
  \qquad \alpha_\eta=\frac{\eta}{\rho+\eta},
\]
to obtain \(d_{\KS}(P_M^\circ,P_0)\le\rho\) and
\[
  \KL(Q\|P_M^\circ)
  \le\KL(Q\|P_M)-\log(1-\alpha_\eta)
  \le\KL(Q\|\bar P)-\log(1-\alpha_\eta).
\]
Letting \(\eta\downarrow0\) proves lower semicontinuity.  Strict positivity follows from \Cref{lem:ipm-positive}, since KS is generated by functions bounded by one.
\end{proof}

\section{Other non-weakly-compact nulls}\label{app:more-examples}

\subsection{Support-restriction nulls}

Let \(S\subseteq\X\) be closed and define
\[
  \Pcal_S:=\{P\in\M(\X):P(S)=1\}.
\]
If \(S\) is noncompact, then \(\Pcal_S\) is generally not weakly compact: any sequence \(x_n\in S\) with no convergent subsequence yields \(\delta_{x_n}\in\Pcal_S\) with no weakly convergent subsequence.  Nevertheless
\begin{equation}\label{eq:support-Phi}
  \Phi_{\Pcal_S}(Q)
  =
  \begin{cases}
  0, & Q(S)=1,\\
  +\infty, & Q(S)<1.
  \end{cases}
\end{equation}
Indeed, if \(Q(S)=1\), choose \(P=Q\).  If \(Q(S)<1\), then every \(P\in\Pcal_S\) assigns zero mass to \(S^c\), so \(Q\not\ll P\) and \(\KL(Q\|P)=+\infty\).  Since \(S\) is closed, \(\Pcal_S\) is weakly closed by Portmanteau, and the extended-valued function in \eqref{eq:support-Phi} is weakly lower semicontinuous.  This covers support, feasibility, and shape-restriction nulls where observations are required to lie in a known closed subset of a noncompact sample space.

\subsection{Fixed-marginal and observable-only nulls}

Let \(\X=\mathsf Z\times\mathsf Y\), with both spaces Polish, and let \(\pi_Z(z,y)=z\).  Fix \(\mu\in\M(\mathsf Z)\) and define
\[
  \Pcal_\mu:=\{P\in\M(\mathsf Z\times\mathsf Y): (\pi_Z)_\#P=\mu\}.
\]
If \(\mathsf Y\) is noncompact, \(\Pcal_\mu\) is not weakly compact: for any non-tight sequence \((\nu_n)\subset\M(\mathsf Y)\), the product measures \(\mu\otimes\nu_n\) lie in \(\Pcal_\mu\) and are not tight on the product.  Still, the KL projection is explicit:
\begin{equation}\label{eq:fixed-marginal-Phi}
  \Phi_{\Pcal_\mu}(Q)=\KL(Q_Z\|\mu),
\end{equation}
where \(Q_Z=(\pi_Z)_\#Q\).

\begin{proposition}
    In the above setting, \(Q\mapsto\Phi_{\Pcal_\mu}(Q)\) is weakly lower semicontinuous.
\end{proposition}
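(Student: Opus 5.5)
The plan is first to establish the explicit formula \eqref{eq:fixed-marginal-Phi}. Then weak lower semicontinuity reduces to two standard facts: weak continuity of the marginal map, and Lemma~\ref{lem:kl-lsc}.

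\emph{Step 1: lower bound.} For any \(P\in\Pcal_\mu\), the data-processing inequality for relative entropy under the measurable map \(\pi_Z\) gives
\[
\KL(Q\|P)\ge\KL((\pi_Z)_\#Q\|(\pi_Z)_\#P)=\KL(Q_Z\|\mu).
\]
This is also immediate from the chain rule. Taking the infimum over \(P\in\Pcal_\mu\) yields \(\Phi_{\Pcal_\mu}(Q)\ge\KL(Q_Z\|\mu)\).

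\emph{Step 2: attainment.} If \(\KL(Q_Z\|\mu)=\infty\), there is nothing to prove. Otherwise \(Q_Z\ll\mu\). Since \(\mathsf Z\times\mathsf Y\) is Polish, we can disintegrate \(Q(dz,dy)=Q_Z(dz)\,Q_{Y|z}(dy)\) with a regular conditional kernel, and set
\[
P^\star(dz,dy):=\mu(dz)\,Q_{Y|z}(dy).
\]
This \(P^\star\) lies in \(\Pcal_\mu\). Moreover \(Q\ll P^\star\) with \(\frac{dQ}{dP^\star}(z,y)=\frac{dQ_Z}{d\mu}(z)\). To verify this, note that for a rectangle \(A\times B\) we have
\[
\int_{A\times B}\tfrac{dQ_Z}{d\mu}\,dP^\star=\int_A Q_{Y|z}(B)\,Q_Z(dz)=Q(A\times B).
\]
A \(\pi\)–\(\lambda\) argument then extends the identity to all Borel sets. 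Hence \(\KL(Q\|P^\star)=\int\log\frac{dQ_Z}{d\mu}\,dQ_Z=\KL(Q_Z\|\mu)\), which proves \eqref{eq:fixed-marginal-Phi}.

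\emph{Step 3: lower semicontinuity.} Let \(Q_n\Rightarrow Q\). Since \(\pi_Z\) is continuous, the continuous mapping theorem gives \((Q_n)_Z\Rightarrow Q_Z\): for \(g\in C_b(\mathsf Z)\), the function \(g\circ\pi_Z\) lies in \(C_b(\X)\). By Lemma~\ref{lem:kl-lsc}, applied on the Polish space \(\mathsf Z\) with the second argument fixed at \(\mu\),
\[
\KL(Q_Z\|\mu)\le\liminf_n\KL((Q_n)_Z\|\mu).
\]
Combined with \eqref{eq:fixed-marginal-Phi}, this is exactly the claimed weak lower semicontinuity.

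The only mildly delicate step is Step 2. It needs the existence of the regular conditional distribution, which holds because the spaces are Polish, and the verification of the Radon–Nikodym identity. The rest is routine.
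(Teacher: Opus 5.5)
Your proof is correct and matches the paper's argument step for step. You use data processing for the lower bound, the measure \(\mu(dz)\,Q(dy\mid z)\) for attainment, and weak continuity of \(Q\mapsto Q_Z\) combined with Lemma~\ref{lem:kl-lsc}. The only difference is that you verify \(dQ/dP^\star=\tfrac{dQ_Z}{d\mu}\circ\pi_Z\) directly by a \(\pi\)--\(\lambda\) argument, whereas the paper simply invokes the chain rule for relative entropy.
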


\begin{proof}
For every \(P\in\Pcal_\mu\), data processing under the projection \(\pi_Z\) gives
\[
  \KL(Q\|P)\ge \KL(Q_Z\|P_Z)=\KL(Q_Z\|\mu).
\]
If \(Q_Z\not\ll\mu\), the right-hand side is infinite, and the claim follows.  Otherwise let \(Q(dy\mid z)\) be a regular conditional distribution under \(Q\), and define
\[
  P^\natural(dz,dy):=\mu(dz)Q(dy\mid z).
\]
Then \(P^\natural\in\Pcal_\mu\), and the chain rule for relative entropy gives
\[
  \KL(Q\|P^\natural)=\KL(Q_Z\|\mu).
\]
Thus \eqref{eq:fixed-marginal-Phi} holds.  Since \(Q\mapsto Q_Z\) is weakly continuous and relative entropy is weakly lower semicontinuous, \(Q\mapsto\Phi_{\Pcal_\mu}(Q)\) is weakly lower semicontinuous.
\end{proof}

The preceding fixed-marginal calculation has a useful observable-only extension, but the reduced optimization must be taken over observable laws that are actually attainable as pushforwards from \(\X\).  This qualification matters when \(T\) is not onto.

\begin{proposition}[Observable-only nulls, with attainable marginals]\label{prop:observable-only-corrected}
Let \(\X\) and \(\mathsf Z\) be Polish spaces, let \(T:\X\to\mathsf Z\) be Borel, and let \(\mathcal A\subseteq\M(\mathsf Z)\).  Define the lifted null
\[
  \Pcal_{T,\mathcal A}
  :=
  \{P\in\M(\X):T_\#P\in\mathcal A\},
\]
and the attainable part of the reduced null
\[
  \mathcal A_T
  :=
  \mathcal A\cap T_\#\M(\X)
  =
  \{\nu\in\mathcal A:\nu=T_\#P\text{ for some }P\in\M(\X)\}.
\]
Then, for every \(Q\in\M(\X)\),
\begin{equation}\label{eq:observable-only-corrected}
  \Phi_{\Pcal_{T,\mathcal A}}(Q)
  =
  \inf_{\nu\in\mathcal A_T}\KL(T_\#Q\|\nu),
\end{equation}
with the convention that the infimum over the empty set is \(+\infty\).  If, in addition, \(T\) is continuous and the reduced functional
\[
  R\mapsto \inf_{\nu\in\mathcal A_T}\KL(R\|\nu)
\]
is weakly lower semicontinuous on \(\M(\mathsf Z)\), then \(Q\mapsto\Phi_{\Pcal_{T,\mathcal A}}(Q)\) is weakly lower semicontinuous.
\end{proposition}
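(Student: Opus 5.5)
The identity \eqref{eq:observable-only-corrected} will be proved by two inequalities, following the fixed-marginal argument but with a general observable map $T$ in place of $\pi_Z$. The lower semicontinuity claim will then follow by composing with the continuous pushforward.

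\emph{Lower bound.} Fix $Q\in\M(\X)$ and any $P\in\Pcal_{T,\mathcal A}$. Then $\nu:=T_\#P$ lies in $\mathcal A$, and by construction it is attainable, so $\nu\in\mathcal A_T$. The data-processing inequality for relative entropy under the measurable map $T$ gives $\KL(Q\|P)\ge\KL(T_\#Q\|T_\#P)$. This is at least $\inf_{\nu\in\mathcal A_T}\KL(T_\#Q\|\nu)$. Taking the infimum over $P$ yields ``$\ge$''. If $\Pcal_{T,\mathcal A}$ is empty, then $\mathcal A_T$ is empty too, and both sides equal $+\infty$ under the stated convention.

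\emph{Upper bound.} Fix $\nu\in\mathcal A_T$ with $\KL(T_\#Q\|\nu)<\infty$; otherwise there is nothing to prove. Pick $P_1\in\M(\X)$ with $T_\#P_1=\nu$. Since $\X$ and $\mathsf Z$ are Polish, disintegrate $Q$ along $T$ to get a kernel $Q(dx\mid z)$ concentrated on $T^{-1}(z)$ for $T_\#Q$-a.e.\ $z$. Do the same for $P_1$, giving a kernel $P_1(dx\mid z)$ concentrated on $T^{-1}(z)$ for $\nu$-a.e.\ $z$. Write $\nu=h\,T_\#Q+\nu^\perp$ (Lebesgue decomposition), with $\nu^\perp$ carried by a set $N$ with $T_\#Q(N)=0$. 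Define
\[
P^\natural(dx):=\int Q(dx\mid z)\,\one_{N^c}(z)\,\nu(dz)+\int P_1(dx\mid z)\,\one_N(z)\,\nu(dz).
\]
Then $T_\#P^\natural=\nu\in\mathcal A$, so $P^\natural\in\Pcal_{T,\mathcal A}$. On $T^{-1}(N^c)$, $P^\natural$ has the same conditional law given $T$ as $Q$. The chain rule for relative entropy therefore gives $\KL(Q\|P^\natural)=\KL(T_\#Q\|\nu)+\E_{T_\#Q}[\KL(Q(\cdot\mid z)\|Q(\cdot\mid z))]=\KL(T_\#Q\|\nu)$. Infimizing over $\nu$ yields ``$\le$''.

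The main obstacle is this gluing step. The conditional of $Q$ is only defined $T_\#Q$-a.e., while $\nu$ may charge sets of $T_\#Q$-measure zero, and every fiber $T^{-1}(z)$ charged by $\nu$ must be nonempty. Attainability of $\nu$ is exactly what supplies $P_1$ to fill the $\nu^\perp$ part. Measurability of the glued kernel needs a short check, for instance by choosing $N$ Borel and using jointly measurable versions of the kernels.

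\emph{Lower semicontinuity.} Write $\Psi(R):=\inf_{\nu\in\mathcal A_T}\KL(R\|\nu)$, so the identity gives $\Phi_{\Pcal_{T,\mathcal A}}=\Psi\circ T_\#$. If $T$ is continuous and $Q_n\Rightarrow Q$, then $T_\#Q_n\Rightarrow T_\#Q$, because $g\circ T\in C_b(\X)$ for every $g\in C_b(\mathsf Z)$. Assumed weak lower semicontinuity of $\Psi$ then gives
\[
\Phi(Q)=\Psi(T_\#Q)\le\liminf_n\Psi(T_\#Q_n)=\liminf_n\Phi(Q_n).
\]
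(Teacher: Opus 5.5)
Your proof is correct and follows the paper's own argument. The lower bound is data processing under \(T\). The upper bound glues an attainable lift of \(\nu\) with the disintegration of \(Q\) along \(T\) and then applies the chain rule. Lower semicontinuity comes from composing with the continuous pushforward.

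The only difference is where you glue. The paper switches to the lift's kernel off the \(T_\#Q\)-full set on which \(Q\)'s kernel is fiber-supported. You switch on the singular set \(N\) of the Lebesgue decomposition. That choice works because \(\nu|_{N^c}\ll T_\#Q\), so \(Q(\cdot\mid z)\) is fiber-supported for \(\nu\)-a.e.\ \(z\in N^c\).
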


\begin{proof}
Write \(Q_T=T_\#Q\).  For any \(P\in\Pcal_{T,\mathcal A}\), set \(\nu=T_\#P\).  Then \(\nu\in\mathcal A_T\), and data processing gives
\[
  \KL(Q\|P)\ge \KL(Q_T\|\nu).
\]
Taking the infimum over \(P\) gives the lower bound in \eqref{eq:observable-only-corrected}.

Conversely fix \(\nu\in\mathcal A_T\).  If \(\KL(Q_T\|\nu)=+\infty\), there is nothing to prove for this \(\nu\).  Otherwise \(Q_T\ll\nu\).  Since \(\nu\in T_\#\M(\X)\), choose \(\widetilde P\in\M(\X)\) with \(T_\#\widetilde P=\nu\).  Let \(R_\nu(dx\mid z)\) be a regular conditional distribution of \(\widetilde P\) given \(T\), chosen so that \(R_\nu(T^{-1}\{z\}\mid z)=1\) for \(\nu\)-almost every \(z\).  Let \(K_Q(dx\mid z)\) be a regular conditional distribution of \(Q\) given \(T\), chosen so that \(K_Q(T^{-1}\{z\}\mid z)=1\) for \(Q_T\)-almost every \(z\).  Patch these kernels by using \(K_Q\) on a \(Q_T\)-full set where it is supported on the fiber, and using \(R_\nu\) elsewhere.  The resulting kernel \(K(dx\mid z)\) is supported on \(T^{-1}\{z\}\) for \(\nu\)-almost every \(z\), and agrees with \(K_Q\) for \(Q_T\)-almost every \(z\).  Define
\[
  P^{\nu,Q}(dx):=\int K(dx\mid z)\,\nu(dz).
\]
Then \(T_\#P^{\nu,Q}=\nu\), so \(P^{\nu,Q}\in\Pcal_{T,\mathcal A}\), and the chain rule for relative entropy under the disintegration induced by \(T\) gives
\[
  \KL(Q\|P^{\nu,Q})
  =
  \KL(Q_T\|\nu)
  +
  \int \KL\{K_Q(\cdot\mid z)\|K(\cdot\mid z)\}\,Q_T(dz)
  =
  \KL(Q_T\|\nu).
\]
Taking the infimum over \(\nu\in\mathcal A_T\) gives the upper bound and proves \eqref{eq:observable-only-corrected}.

If \(T\) is continuous and \(Q_n\Rightarrow Q\) on \(\X\), then \(T_\#Q_n\Rightarrow T_\#Q\) on \(\mathsf Z\).  The asserted lower semicontinuity follows by composing this weakly continuous pushforward map with the weakly lower semicontinuous reduced functional.
\end{proof}

\subsection{Conditional-kernel models with arbitrary design distribution}

Let \(\X=\mathsf Z\times\mathsf Y\).  Let \(K(dy\mid z)\) be a Markov kernel from \(\mathsf Z\) to \(\mathsf Y\), and define the semiparametric conditional-model null
\[
  \Pcal_K:=\{\nu(dz)K(dy\mid z):\nu\in\M(\mathsf Z)\}.
\]
This is the usual regression-style setup in which the conditional law of \(Y\) given \(Z\) is specified, while the design distribution of \(Z\) is completely unrestricted.  If \(\mathsf Z\) is noncompact, then \(\Pcal_K\) need not be weakly compact, because \(\nu\) may escape to infinity.  Assume that \(K\) is Feller in the following product sense: for every \(h\in C_b(\mathsf Z\times\mathsf Y)\),
\[
  z\mapsto \int h(z,y)K(dy\mid z)
\]
is in \(C_b(\mathsf Z)\).  Then
\begin{equation}\label{eq:conditional-kernel-Phi}
  \Phi_{\Pcal_K}(Q)
  =\KL(Q\|Q_ZK),
\end{equation}
where \(Q_ZK\) denotes the joint law \(Q_Z(dz)K(dy\mid z)\).

\begin{proof}
For any \(\nu\in\M(\mathsf Z)\), the chain rule yields
\[
  \KL(Q\|\nu K)
  =\KL(Q_Z\|\nu)+\KL(Q\|Q_ZK),
\]
with the usual convention that both sides may be infinite.  The infimum over \(\nu\) is therefore attained at \(\nu=Q_Z\), giving \eqref{eq:conditional-kernel-Phi}.  The Feller assumption implies that \(Q_n\Rightarrow Q\) entails \(Q_{n,Z}K\Rightarrow Q_ZK\).  Joint weak lower semicontinuity of KL then gives lower semicontinuity of \(Q\mapsto\KL(Q\|Q_ZK)\).
\end{proof}

The same proof handles a compact parametric family of conditional kernels.  Let \(\Theta\) be compact, and suppose \((z,\theta)\mapsto\int h(z,y)K_\theta(dy\mid z)\) is bounded continuous for every \(h\in C_b(\mathsf Z\times\mathsf Y)\).  For
\[
  \Pcal_\Theta:=\{\nu(dz)K_\theta(dy\mid z):\nu\in\M(\mathsf Z),\theta\in\Theta\},
\]
we have
\[
  \Phi_{\Pcal_\Theta}(Q)
  =\inf_{\theta\in\Theta}\KL(Q\|Q_ZK_\theta),
\]
and this infimum is weakly lower semicontinuous by the usual compact-subsequence argument.  This covers many GLM, conditional density, and Markov-transition nulls with unrestricted initial or design distribution.

\subsection{Independence nulls}

Let \(\X=\mathsf X_1\times\mathsf X_2\), and define
\[
  \Pcal_{\mathrm{ind}}
  :=\{P_1\otimes P_2:P_1\in\M(\mathsf X_1),\ P_2\in\M(\mathsf X_2)\}.
\]
If either factor space is noncompact, \(\Pcal_{\mathrm{ind}}\) is not weakly compact.  Nevertheless,
\begin{equation}\label{eq:independence-Phi}
  \Phi_{\Pcal_{\mathrm{ind}}}(Q)
  =\KL(Q\|Q_1\otimes Q_2)=:I_Q(X_1;X_2),
\end{equation}
where \(Q_i\) is the \(i\)-th marginal of \(Q\).  Indeed, for any product law \(P_1\otimes P_2\), the entropy chain rule gives
\[
  \KL(Q\|P_1\otimes P_2)
  =\KL(Q\|Q_1\otimes Q_2)+\KL(Q_1\|P_1)+\KL(Q_2\|P_2),
\]
so the infimum is attained at \(P_i=Q_i\).  Since marginalization and the product map \((\nu_1,\nu_2)\mapsto \nu_1\otimes\nu_2\) are weakly continuous on Polish spaces, \(Q_n\Rightarrow Q\) implies \(Q_{n,1}\otimes Q_{n,2}\Rightarrow Q_1\otimes Q_2\).  Joint lower semicontinuity of relative entropy proves that mutual information is weakly lower semicontinuous.  Therefore the independence null gives a noncompact but REGROW-compatible null whenever the sample space is noncompact.

\subsection{Finite-group invariance nulls}

Let \(G\) be a finite group acting on \(\X\) by homeomorphisms.  Define the invariant null
\[
  \Pcal_G:=\{P\in\M(\X):g_\#P=P\text{ for all }g\in G\}.
\]
If \(\X\) is noncompact and there are points escaping every compact set, then \(\Pcal_G\) is typically noncompact: the orbit averages \(|G|^{-1}\sum_{g\in G}\delta_{g x_n}\) can escape to infinity.  For \(Q\in\M(\X)\), let
\[
  \bar Q:=\frac1{|G|}\sum_{g\in G} g_\#Q.
\]
Then \(\bar Q\in\Pcal_G\), and
\begin{equation}\label{eq:group-Phi}
  \Phi_{\Pcal_G}(Q)=\KL(Q\|\bar Q).
\end{equation}

\begin{proof}
Fix \(P\in\Pcal_G\).  If \(Q\not\ll P\), then \(\KL(Q\|P)=+\infty\).  Otherwise \(\bar Q\ll P\), and since \(\bar Q\ge |G|^{-1}Q\), also \(Q\ll\bar Q\).  Let \(h=d\bar Q/dP\).  Because both \(\bar Q\) and \(P\) are invariant, \(h\) can be chosen invariant.  Therefore
\[
  \int \log h\,dQ=\int \log h\,d\bar Q.
\]
Using \(dQ/dP=(dQ/d\bar Q)(d\bar Q/dP)\), we get
\[
  \KL(Q\|P)
  =\KL(Q\|\bar Q)+\KL(\bar Q\|P)
  \ge \KL(Q\|\bar Q).
\]
Equality is attained at \(P=\bar Q\).  The map \(Q\mapsto\bar Q\) is weakly continuous because each group action is a homeomorphism, so \eqref{eq:group-Phi} is weakly lower semicontinuous by joint lower semicontinuity of KL.
\end{proof}

This covers sign-flip nulls, label-symmetry nulls, permutation-invariance nulls for fixed finite permutation groups, and other finite randomization symmetries.

\subsection{Bounded moment-inequality and calibration nulls}

Let \(h_1,\ldots,h_m\in C_b(\X)\), and consider the moment-inequality null
\[
  \Pcal_{h,c}:=\left\{P\in\M(\X):\int h_j\,dP\le c_j,\ j=1,\ldots,m\right\}.
\]
Bounded moment inequalities usually do not imply tightness, so \(\Pcal_{h,c}\) may be noncompact on a noncompact \(\X\).  A simple recycling condition gives lower semicontinuity of \(\Phi\).

Assume that \(\X\) is embedded as a Borel subset of a compact metric space \(K\), that each \(h_j\) extends continuously to \(\bar h_j\in C(K)\), and that there exists \(R_\infty\in\M(\X)\) such that
\begin{equation}\label{eq:moment-recycle-condition}
  \int h_j\,dR_\infty
  \le \inf_{z\in K\setminus \X}\bar h_j(z),
  \qquad j=1,\ldots,m,
\end{equation}
with the convention that the condition is void if \(K\setminus\X=\varnothing\).  Then \(Q\mapsto\Phi_{\Pcal_{h,c}}(Q)\) is weakly lower semicontinuous.

\begin{proof}
Let \(Q_n\Rightarrow Q\), and choose nearly optimal \(P_n\in\Pcal_{h,c}\) along a liminf subsequence.  Viewing the measures on \(K\), compactness gives a further subsequence \(P_n\Rightarrow\bar P\in\M(K)\).  Let \(b:=\bar P(K\setminus\X)\).  Since \(\bar h_j\) is continuous,
\(
  \int_K\bar h_j\,d\bar P\le c_j.
\)
Define the recycled probability measure on \(\X\)
\[
  P^\natural(A):=\bar P(A)+bR_\infty(A),
  \qquad A\in\mathcal B(\X),
\]
where \(\bar P(A)\) means \(\bar P\) of the embedded copy of \(A\).  Then
\begin{align*}
  \int h_j\,dP^\natural
  &=\int_\X \bar h_j\,d\bar P+b\int h_j\,dR_\infty\\
  &\le \int_\X \bar h_j\,d\bar P+
       \int_{K\setminus\X}\bar h_j\,d\bar P
   \le c_j,
\end{align*}
so \(P^\natural\in\Pcal_{h,c}\).  Also \(P^\natural\) dominates the restriction of \(\bar P\) to \(\X\), while \(Q\) is supported on \(\X\).  Hence
\[
  \KL(Q\|P^\natural)\le \KL(Q\|\bar P)
  \le \liminf_n\KL(Q_n\|P_n),
\]
where the last inequality is joint lower semicontinuity on the compact space \(K\).  Taking the infimum over null denominators gives the result.
\end{proof}

For example, if the functions \(h_j\) vanish at infinity and there is a point or distribution \(R_\infty\) on which all moments are zero, then \eqref{eq:moment-recycle-condition} holds.  This covers many bounded-instrument GMM, calibration, balance, and fairness-style moment-inequality nulls.  Equality constraints can be handled similarly when the recycling distribution matches the boundary moment vector exactly.

\subsection{KL-coercive noncompact parametric families}

Finally, many ordinary parametric nulls are noncompact but still satisfy \Cref{prop:kl-local-compact}.  Let \(\Theta\) be a sigma-compact metric space, let \(\theta\mapsto P_\theta\in\M(\X)\) be weakly continuous, and set
\[
  \Pcal_\Theta:=\{P_\theta:\theta\in\Theta\}.
\]
Assume the following coercivity condition: whenever \(Q_n\Rightarrow Q\) and \(\theta_n\) leaves every compact subset of \(\Theta\),
\begin{equation}\label{eq:parametric-coercive}
  \KL(Q_n\|P_{\theta_n})\to\infty.
\end{equation}
Then \(\Pcal_\Theta\) is KL-locally compact, and hence \(Q\mapsto\Phi_{\Pcal_\Theta}(Q)\) is weakly lower semicontinuous.

\begin{proof}
Let \(Q_n\Rightarrow Q\), \(P_{\theta_n}\in\Pcal_\Theta\), and \(\sup_n\KL(Q_n\|P_{\theta_n})<\infty\).  By \eqref{eq:parametric-coercive}, \((\theta_n)\) cannot leave every compact subset of \(\Theta\).  Since \(\Theta\) is sigma-compact metric, pass to a subsequence \(\theta_{n_k}\to\theta\) after restricting to a compact set.  Weak continuity gives \(P_{\theta_{n_k}}\Rightarrow P_\theta\in\Pcal_\Theta\).  Thus \(\Pcal_\Theta\) is KL-locally compact, and \Cref{prop:kl-local-compact} applies.
\end{proof}

The preceding coercivity assumption can be verified by showing that escaping parameters put negligible mass on sets on which the limiting alternative has positive mass.  The next statement makes the needed continuity-set condition explicit.

\begin{proposition}[Escape criterion via continuity sets]\label{prop:escape-continuity}
Let \(\theta_n\) leave every compact subset of \(\Theta\).  Suppose that for every \(Q\in\M(\X)\) there exists a Borel set \(A=A(Q,(\theta_n))\) such that
\[
  Q(A)>0,\qquad Q(\partial A)=0,
  \qquad P_{\theta_n}(A)\to0.
\]
Then \(\KL(Q_n\|P_{\theta_n})\to\infty\) for every sequence \(Q_n\Rightarrow Q\).  Consequently, this condition implies the parametric coercivity condition \eqref{eq:parametric-coercive}.
\end{proposition}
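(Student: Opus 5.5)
The approach is to compare $Q_n$ and $P_{\theta_n}$ on the single set $A$ using the data-processing inequality, then let the mass $P_{\theta_n}(A)$ vanish.

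Fix $Q_n\Rightarrow Q$ and let $A$ be the continuity set supplied by the hypothesis for $Q$ and the escaping sequence $(\theta_n)$. First, since $Q(\partial A)=0$, the Portmanteau theorem gives $Q_n(A)\to Q(A)=:q>0$. Hence $Q_n(A)\ge q/2$ for all large $n$.

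Second, apply data processing for relative entropy under the measurable map $x\mapsto\one_A(x)$. This reduces the problem to Bernoulli laws:
\[
  \KL(Q_n\|P_{\theta_n})\ \ge\ \KL\bigl(\mathrm{Ber}(Q_n(A))\,\big\|\,\mathrm{Ber}(P_{\theta_n}(A))\bigr).
\]
Write $a_n:=Q_n(A)$ and $b_n:=P_{\theta_n}(A)$. If $b_n=0$ while $a_n>0$, the right side is $+\infty$. Otherwise use the elementary lower bound
\[
  \KL(\mathrm{Ber}(a)\|\mathrm{Ber}(b))=a\log\frac ab+(1-a)\log\frac{1-a}{1-b}\ \ge\ a\log\frac1b-\log 2 .
\]
This follows because $a\log a+(1-a)\log(1-a)\ge-\log2$ and $(1-a)\log\frac1{1-b}\ge0$.

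Third, combine the two steps. For large $n$ this gives $\KL(Q_n\|P_{\theta_n})\ge \frac q2\log\frac1{b_n}-\log2$, and the right side tends to $+\infty$ because $b_n\to0$. This is exactly \eqref{eq:parametric-coercive} along the given escaping sequence. Since the hypothesis is assumed for every escaping sequence and every $Q$, the coercivity condition holds in full.

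The only point needing care is the order of quantifiers: $A$ is allowed to depend on $Q$ and on $(\theta_n)$ but not on $(Q_n)$. That is why the continuity-set requirement $Q(\partial A)=0$ is used, to transfer positivity of $Q(A)$ to $Q_n(A)$ uniformly in $n$ via Portmanteau.
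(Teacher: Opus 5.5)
Your proposal is correct and follows the paper's proof essentially step for step. You apply Portmanteau on the $Q$-continuity set $A$ to get $Q_n(A)\to Q(A)>0$, use data processing through $x\mapsto\one\{x\in A\}$, and then let the binary divergence blow up as $P_{\theta_n}(A)\to0$. Your explicit bound $a\log\frac1b-\log 2$ on the Bernoulli divergence simply makes quantitative the paper's one-line remark that this divergence diverges when its first argument stays away from zero and its second tends to zero.
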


\begin{proof}
Since \(A\) is a \(Q\)-continuity set and \(Q_n\Rightarrow Q\), the Portmanteau theorem gives \(Q_n(A)\to Q(A)>0\).  On the other hand \(P_{\theta_n}(A)\to0\).  Applying data processing to the binary map \(x\mapsto\one\{x\in A\}\),
\[
  \KL(Q_n\|P_{\theta_n})
  \ge
  \mathrm{kl}\big(Q_n(A),P_{\theta_n}(A)\big)
  \to\infty.
\]
Here the last divergence tends to infinity because its first argument stays bounded away from zero while its second argument tends to zero.
\end{proof}

\begin{corollary}[Escape from compact sets on proper metric spaces]\label[corollary]{cor:proper-escape}
Assume that \((\X,d)\) is proper, meaning that closed metric balls are compact.  Suppose that, whenever \(\theta_n\) leaves every compact subset of \(\Theta\),
\[
  P_{\theta_n}(C)\to0
  \qquad\text{for every compact }C\subseteq\X.
\]
Then \eqref{eq:parametric-coercive} holds.
\end{corollary}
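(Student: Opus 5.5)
The plan is to reduce Corollary~\ref{cor:proper-escape} to Proposition~\ref{prop:escape-continuity}. Fix $Q\in\M(\X)$ and a sequence $\theta_n$ leaving every compact subset of $\Theta$. It suffices to produce a Borel set $A$ with $Q(A)>0$, $Q(\partial A)=0$, and $P_{\theta_n}(A)\to0$. Proposition~\ref{prop:escape-continuity} then gives $\KL(Q_n\|P_{\theta_n})\to\infty$ for every $Q_n\Rightarrow Q$, which is exactly \eqref{eq:parametric-coercive}.

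The natural candidate is an open metric ball $A=B(x_0,R)$ around a fixed point $x_0\in\X$. The key steps are the following.

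\emph{Positive mass.} Since $\X=\bigcup_{R\in\mathbb N}B(x_0,R)$, continuity from below gives $Q(B(x_0,R))\to1$. So $Q(B(x_0,R))>0$ for all large $R$.

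\emph{Null boundary.} We have $\partial B(x_0,R)\subseteq S_R:=\{x:d(x,x_0)=R\}$. The spheres $S_R$ for distinct $R$ are disjoint. Hence $Q(S_R)>0$ for at most countably many $R$, and we can pick $R$ large with $Q(S_R)=0$. This makes $A$ a $Q$-continuity set.

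\emph{Vanishing null mass.} The closure of $A$ lies in the closed ball $\overline B(x_0,R)$, which is compact because $\X$ is proper. Therefore $P_{\theta_n}(A)\le P_{\theta_n}(\overline B(x_0,R))\to0$ by the hypothesis of the corollary.

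The only mildly delicate point is the boundary condition. It is handled by the countable-atoms argument applied to the radial pushforward of $Q$ under $x\mapsto d(x,x_0)$. Everything else is immediate, so I expect no real obstacle.
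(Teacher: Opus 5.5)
Your proof is correct and follows essentially the same route as the paper: reduce to Proposition~\ref{prop:escape-continuity} using a metric ball around $x_0$ whose sphere carries no $Q$-mass (chosen by the countable-atoms argument), with compactness supplied by properness. The only cosmetic difference is that you use the open ball and bound its mass by that of its compact closure, whereas the paper takes the closed ball $\overline B(x_0,r)$ itself as the $Q$-continuity set.
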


\begin{proof}
Fix \(Q\in\M(\X)\) and an escaping sequence \(\theta_n\).  Pick \(x_0\in\X\).  The closed balls \(\overline B(x_0,r)\) increase to \(\X\), and the spheres \(\{x:d(x,x_0)=r\}\) have positive \(Q\)-mass for at most countably many values of \(r\).  Hence one can choose \(r>0\) such that
\[
  Q(\overline B(x_0,r))>0,
  \qquad
  Q(\partial \overline B(x_0,r))=0.
\]
Because the metric is proper, \(C:=\overline B(x_0,r)\) is compact, so the assumed escape-from-compact-sets condition gives \(P_{\theta_n}(C)\to0\).  Thus \(C\) satisfies \Cref{prop:escape-continuity}, and the conclusion follows.
\end{proof}

\begin{remark}[Why compact continuity sets are not automatic]\label[remark]{rem:compact-continuity-not-auto}
The compact-continuity-set step is not automatic on an arbitrary Polish space.  In infinite-dimensional Banach or Hilbert spaces, for example, compact sets typically have empty interior.  A compact set with positive \(Q\)-mass then has boundary mass at least that positive mass, so it need not be a \(Q\)-continuity set.  Thus the phrase ``escape from compact sets'' by itself is not a sufficient proof device in full Polish generality.  One should either assume a proper metric, as in \Cref{cor:proper-escape}, or use the conditional formulation in \Cref{prop:escape-continuity}: if one can find a Borel, preferably compact or relatively compact, \(Q\)-continuity set receiving positive \(Q\)-mass and asymptotically zero null mass, then the KL-coercivity proof works.
\end{remark}

Thus Gaussian location families \(\{N(\theta,\Sigma):\theta\in\RR^d\}\), and more generally regular parametric families on \(\RR^d\) whose escaping parameter sequences put vanishing mass on every compact set, satisfy weak lower semicontinuity of \(\Phi\), despite not being weakly compact.  The conclusion also applies to location-scale families along escaping-location or scale-to-infinity subsequences; other boundary regimes should be checked directly through \Cref{prop:escape-continuity} or the KL-coercivity condition \eqref{eq:parametric-coercive}.

\begin{remark}[Finite unions]
If \(\Pcal_1,\ldots,\Pcal_m\) each have weakly lower semicontinuous \(\Phi_i(Q):=\inf_{P\in\Pcal_i}\KL(Q\|P)\), then the finite union \(\Pcal=\bigcup_{i=1}^m\Pcal_i\) also has weakly lower semicontinuous \(\Phi\), because
\[
  \Phi_\Pcal(Q)=\min_{1\le i\le m}\Phi_i(Q),
\]
and a finite minimum of lower semicontinuous functions is lower semicontinuous.  Hence finite model-selection nulls inherit the property from their components.
\end{remark}

\end{document}